\DeclareMathAlphabet{\mathcalligra}{T1}{calligra}{m}{n}
\newtheorem{thm}{Theorem}[section]
\newtheorem{cor}[thm]{Corollary}
\newtheorem{lem}[thm]{Lemma}
\newtheorem{prop}[thm]{Proposition}
\theoremstyle{definition}
\newtheorem{defn}[thm]{Definition}
\newtheorem{rem}[thm]{Remark}
\newtheorem*{defn*}{Definition}
\newtheorem*{thm*}{Theorem}
\newtheorem*{rems*}{Remarks}
\newtheorem*{rem*}{Remark}
\numberwithin{equation}{section}
\newcommand{\Eq}{{\text{E}}}
\newcommand{\M}{{M}}
\newcommand{\Cwms}{\text{CWMS}}
\newcommand{\Sms}{\text{SMS}}
\newcommand{\lcm}{\text{lcm}}
\begin{document}

\title[Isoperimetric equalities for rosettes] {Isoperimetric equalities for rosettes}
\author{ Micha\l{} Zwierzy\'nski}
\address{Faculty of Mathematics and Information Science\\
Warsaw University of Technology\\
ul. Koszykowa 75, \\
00-662 Warszawa}

\email{zwierzynskim@mini.pw.edu.pl}

\subjclass[2010]{Primary: 52A38, 53A04, 58K70. Secondary: 52A40.}

\keywords{constant width, convex curve, Fourier series, isoperimetric equality, isoperimetric inequality, rosette, support function, Wigner caustic}

\begin{abstract}

In this paper we study the isoperimetric--type equalities for rosettes, i.e. regular closed planar curves with non--vanishing curvature. We find the exact relations between the length and the oriented area of rosettes based on the oriented areas of the Wigner caustic, the Constant Width Measure Set and the Spherical Measure Set.

We also study and find new results about the geometry of affine equidistants of rosettes and of the union of rosettes.

\end{abstract}

\maketitle

\section{Introduction}
The \textit{classical isoperimetric inequality} for plane curves states the following.
\begin{thm} (Isoperimetric inequality) Let $M$ be a simple curve. Let $L_M$, $A_M$ be the length of $M$ and the area of region enclosed by $M$, respectively. Then
\begin{align*}
L_{M}^2\geqslant 4\pi A_M,
\end{align*}
and the equality holds if and only if $M$ is a circle.
\end{thm}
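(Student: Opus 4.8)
The plan is to use Hurwitz's Fourier--analytic argument, which sits naturally inside the analytic framework (Fourier series, support functions) used throughout this paper. First I would observe that the inequality $L_M^2 \geqslant 4\pi A_M$ is invariant under homotheties of the plane, since $L_M$ scales linearly and $A_M$ quadratically; hence I may rescale so that $L_M = 2\pi$ and parametrize $M$ with constant speed by $\gamma(t) = (x(t), y(t))$, $t \in [0,2\pi]$, with $x'(t)^2 + y'(t)^2 \equiv 1$. This normalization gives $\int_0^{2\pi}(x'^2 + y'^2)\,dt = 2\pi = L_M^2/(2\pi)$, while Green's theorem together with one integration by parts (using periodicity) yields $A_M = \tfrac12\int_0^{2\pi}(xy' - yx')\,dt = \int_0^{2\pi} x\,y'\,dt$.

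Next I would expand the coordinate functions in Fourier series, $x(t) = a_0 + \sum_{n\geqslant 1}(a_n\cos nt + b_n\sin nt)$ and $y(t) = c_0 + \sum_{n\geqslant 1}(c_n\cos nt + d_n\sin nt)$, and apply Parseval's identity to each integral above. Since the constant terms $a_0, c_0$ do not enter the derivatives, a direct computation of
\[
\frac{L_M^2}{2\pi} - 2A_M = \int_0^{2\pi}(x'^2 + y'^2)\,dt - 2\int_0^{2\pi} x\,y'\,dt
\]
reduces it, after collecting coefficients, to
\[
\pi\sum_{n\geqslant 1}\Big[(na_n - d_n)^2 + (nb_n + c_n)^2 + (n^2 - 1)(c_n^2 + d_n^2)\Big].
\]
Because $n \geqslant 1$, every summand is nonnegative, so $L_M^2 - 4\pi A_M \geqslant 0$, which is the asserted inequality.

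For the equality case I would read off the conditions that force the displayed sum to vanish. For each $n \geqslant 2$ the factor $n^2 - 1$ is strictly positive, so $c_n = d_n = 0$, and then the two remaining squares force $a_n = b_n = 0$; thus only the first harmonic survives, subject to $a_1 = d_1$ and $b_1 = -c_1$. Substituting back shows $\gamma(t) = (a_0, c_0) + (a_1\cos t + b_1\sin t,\ -b_1\cos t + a_1\sin t)$, a circle of radius $\sqrt{a_1^2 + b_1^2}$ traced once, so $M$ is a circle. The point requiring care is not the algebra but the analytic justification: applying Parseval term by term to $\int x'^2\,dt$ and to $\int x\,y'\,dt$ presupposes enough regularity (e.g.\ that $x,y$ be absolutely continuous with square-integrable derivatives), which is automatic for the smooth curves considered here but for a merely rectifiable simple curve would require an approximation argument. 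I expect this regularity issue, rather than the sum-of-squares computation, to be the only genuine obstacle.
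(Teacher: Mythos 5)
Your proof is correct, but there is nothing in the paper to compare it against: the statement is the classical isoperimetric inequality, quoted in the introduction as background and attributed to Steiner \cite{S1} and the subsequent literature, and the paper never proves it --- its own contributions (Theorems \ref{ThmIsoEqOvals}, \ref{ThmIsoEq}, \ref{ThmIsoEq2}) are refinements of this inequality into exact equalities. What you give is the standard Fourier-analytic proof of Hurwitz \cite{H3} (a reference the paper itself cites), and the details check out: the scaling normalization $L_M=2\pi$, the reduction $A_M=\int_0^{2\pi}xy'\,dt$ via Green's theorem and one integration by parts, and the algebraic identity $(na_n-d_n)^2+(nb_n+c_n)^2+(n^2-1)(c_n^2+d_n^2)=n^2(a_n^2+b_n^2+c_n^2+d_n^2)-2n(a_nd_n-b_nc_n)$, which after Parseval gives exactly $\frac{L_M^2}{2\pi}-2A_M\geqslant 0$; the equality analysis correctly kills all harmonics with $n\geqslant 2$ and ties the first harmonics so that $\gamma$ traces a circle (unit speed then forces its radius to be $1=L_M/2\pi$). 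Two small points to tighten: you should fix the positive orientation of $M$ (or pass to $|A_M|$) before invoking Green's theorem, since with the opposite orientation the integral $\frac{1}{2}\int(xy'-yx')\,dt$ equals $-A_M$ and your inequality becomes vacuous rather than the claim; and the ``if'' direction of the equality statement (a circle satisfies $L^2=4\pi A$) deserves its one-line verification. It is worth noting how your route relates to the paper's actual machinery: within the support-function framework of Section \ref{SectRosettes} one gets an even shorter proof, since \eqref{FourierLength} and \eqref{AreaRFourier} with $m=1$ give $L_{R_1}=2\pi a_0$ and $\widetilde{A}_{R_1}=\pi a_0^2-\frac{\pi}{2}\sum_{n\geqslant 2}(n^2-1)(a_n^2+b_n^2)$, whence $L_{R_1}^2-4\pi \widetilde{A}_{R_1}=2\pi^2\sum_{n\geqslant 2}(n^2-1)(a_n^2+b_n^2)\geqslant 0$, with equality if and only if $p(\theta)=a_0+a_1\cos\theta+b_1\sin\theta$, i.e.\ a circle --- and indeed this is the computation underlying the paper's equalities in \cite{Zw1, Zw2}; but that argument presupposes convexity (existence of the Minkowski support function), whereas your coordinate-based Hurwitz argument covers all sufficiently regular simple closed curves, which is precisely the generality in which the theorem is stated. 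Your closing caveat is also accurate: for a merely rectifiable simple curve one needs absolute continuity of $x,y$ with $x',y'\in L^2$ (guaranteed by the arclength parameterization) or an approximation argument to justify Parseval, and this is the only nontrivial analytic point in the proof.
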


This result is known already in the Ancient Greece but first proof was given in the nineteenth century by Steiner \cite{S1}. After this paper, there have been many new proofs, applications and generalizations of this theorem, see  \cite{C1, FKN1, G6, G7, GO1, G4, H3,  L1, S1, Zw1, Zw2} and the literature therein. Recently in \cite{Zw1, Zw2} we prove the improved isoperimetric--type inequalities and equalities for planar ovals.

\begin{thm}\label{ThmIsoEqOvals}\cite{Zw1, Zw2} (Isoperimetric--type equalities)
Let $M$ be an oval. Then
\begin{align*}
L_{M}^2=4\pi A_M+8\pi\left|\widetilde{A}_{E_{0.5}(\M)}\right|+\pi\left|\widetilde{A}_{\Cwms(M)}\right|,
\end{align*}
and if $M$ is not a curve of constant width, then
\begin{align*}
L_{M}^2=4\pi A_M+4\pi\left|\widetilde{A}_{\Sms(M)}\right|,
\end{align*}
where $\widetilde{A}_{E_{0.5}(M)}, \widetilde{A}_{\Cwms(M)}, \widetilde{A}_{\Sms(M)}$ are the oriented area of the Wigner caustic of $M$, the oriented area of the Constant Width Measure Set of $M$ the oriented area of the Spherical Measure Set of $M$, respectively.

Furthermore $M$ is a curve of constant width if and only if 
\begin{align*}
L_{M}^2=4\pi A_M+8\pi\left|\widetilde{A}_{E_{0.5}(M)}\right|,
\end{align*}
and $M$ is a curve which has the center of symmetry if and only if 
\begin{align*}
L_{M}^2=4\pi A_M+\pi\left|\widetilde{A}_{\Cwms(M)}\right|.
\end{align*}
\end{thm}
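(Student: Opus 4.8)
The plan is to work with the support function and its Fourier expansion, and then to reduce both equivalences to the single equality
\begin{equation}\label{eqStarPlan}
L_{M}^2=4\pi A_M+8\pi\left|\widetilde{A}_{E_{0.5}(\M)}\right|+\pi\left|\widetilde{A}_{\Cwms(M)}\right|
\end{equation}
already contained in the first part of this theorem. Writing $p$ for the support function of $M$ and expanding $p(\theta)=a_0+\sum_{n\geqslant 1}(a_n\cos n\theta+b_n\sin n\theta)$, I would first record the classical formulas $L_M=2\pi a_0$ and $A_M=\pi a_0^2+\tfrac{\pi}{2}\sum_{n\geqslant 2}(1-n^2)(a_n^2+b_n^2)$, so that the isoperimetric deficit becomes the sign--definite quadratic form
\begin{equation*}
L_M^2-4\pi A_M=2\pi^2\sum_{n\geqslant 2}(n^2-1)(a_n^2+b_n^2).
\end{equation*}

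Next I would separate the odd and even harmonics. The Wigner caustic $E_{0.5}(M)$ is the midpoint locus of chords joining points with antipodal normals, and its support function is $\tfrac12\big(p(\theta)-p(\theta+\pi)\big)$, which retains exactly the odd--indexed harmonics of $p$; the Constant Width Measure Set is governed by $\tfrac12\big(p(\theta)+p(\theta+\pi)\big)$ and retains the even--indexed harmonics with $n\geqslant 2$. Evaluating the oriented--area functional $\tfrac12\int_0^{2\pi}(h^2-h'^2)\,d\theta$ on these two support functions, and accounting for the multiplicity with which each (generically singular) curve is traversed, I would obtain
\begin{align*}
8\pi\left|\widetilde{A}_{E_{0.5}(\M)}\right|&=2\pi^2\!\!\sum_{\substack{n\geqslant 3\\ n\text{ odd}}}\!\!(n^2-1)(a_n^2+b_n^2),\\
\pi\left|\widetilde{A}_{\Cwms(M)}\right|&=2\pi^2\!\!\sum_{\substack{n\geqslant 2\\ n\text{ even}}}\!\!(n^2-1)(a_n^2+b_n^2),
\end{align*}
whose sum reproduces the deficit, confirming \eqref{eqStarPlan}.

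The two equivalences then follow from the positivity of these forms. A curve has constant width precisely when the width function $p(\theta)+p(\theta+\pi)$ is constant, that is, when all even coefficients with $n\geqslant 2$ vanish; by the second display this is exactly $\widetilde{A}_{\Cwms(M)}=0$, and substituting into \eqref{eqStarPlan} yields the constant--width equality. Likewise $M$ has a centre of symmetry precisely when, after a translation, $p(\theta)=p(\theta+\pi)$, that is, when all odd coefficients vanish; by the first display this is exactly $\widetilde{A}_{E_{0.5}(\M)}=0$, and substituting into \eqref{eqStarPlan} yields the centrally symmetric equality. In each case the nontrivial direction uses that a sum of nonnegative terms $(n^2-1)(a_n^2+b_n^2)$ vanishes only when every coefficient in the relevant parity class is zero.

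The main obstacle is the middle step: correctly identifying $\left|\widetilde{A}_{E_{0.5}(\M)}\right|$ and $\left|\widetilde{A}_{\Cwms(M)}\right|$ as sign--definite quadratic forms in the odd, respectively even, Fourier coefficients. This requires pinning down the support--function description of each auxiliary set and, crucially, the multiplicity with which these cusped curves are traversed, so that the normalisation of $\tfrac12\int_0^{2\pi}(h^2-h'^2)\,d\theta$ matches the constants $8\pi$ and $\pi$ in \eqref{eqStarPlan}. Once that normalisation is fixed, both equivalences are immediate.
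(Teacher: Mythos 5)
Your route is exactly the paper's: Theorem \ref{ThmIsoEqOvals} is quoted here from \cite{Zw1, Zw2}, and the proofs this paper gives for the rosette generalizations (Lemma \ref{LemFourierAreaBranchWC}, Theorem \ref{ThmIsoEq}, Theorem \ref{ThmIsoEq2}) specialize at $m=1$ to precisely your computation. Your normalizations check out: with $p_{E_{0.5,1}}(\theta)=\frac12\big(p(\theta)-p(\theta+\pi)\big)$ and the double covering of the Wigner caustic (Remark \ref{RemDoubleCov}) one gets $\left|\widetilde{A}_{E_{0.5}(M)}\right|=\frac{\pi}{4}\sum_{n\geqslant 3,\, n\ \text{odd}}(n^2-1)(a_n^2+b_n^2)$, and with $p_{\Cwms(M)}(\theta)=p(\theta)+p(\theta+\pi)-L_M/\pi$ (twice your centered half-width $\frac12\big(p(\theta)+p(\theta+\pi)\big)$ --- this factor is exactly why your ``pinning down the normalisation'' caveat matters, but your final display carries the correct constant) one gets $\left|\widetilde{A}_{\Cwms(M)}\right|=2\pi\sum_{n\geqslant 2,\, n\ \text{even}}(n^2-1)(a_n^2+b_n^2)$; the two sums reproduce the deficit $L_M^2-4\pi A_M=2\pi^2\sum_{n\geqslant 2}(n^2-1)(a_n^2+b_n^2)$, and both equivalences follow from sign-definiteness as you argue, with the $n=1$ harmonics correctly absorbed by a translation in the central-symmetry case.

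There is, however, a genuine omission: the theorem makes four assertions and you prove only three. The second display, $L_M^2=4\pi A_M+4\pi\left|\widetilde{A}_{\Sms(M)}\right|$ for $M$ not of constant width, never appears in your plan. In your framework the computation itself is one line --- by (\ref{SupportSms}) the support function of $\Sms(M)$ is $p(\theta)-L_M/(2\pi)=\sum_{n\geqslant 1}(a_n\cos n\theta+b_n\sin n\theta)$, whence $\widetilde{A}_{\Sms(M)}=-\frac{\pi}{2}\sum_{n\geqslant 2}(n^2-1)(a_n^2+b_n^2)$ and $4\pi\left|\widetilde{A}_{\Sms(M)}\right|$ is again the full deficit --- but you would also have to explain where the non-constant-width hypothesis enters, which is the one subtle point of this part. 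For a constant-width oval the support function of $\Sms(M)$ satisfies $p_{\Sms(M)}(\theta+\pi)=-p_{\Sms(M)}(\theta)$, so $\Sms(M)$ coincides with the Wigner caustic and is traversed twice by the parametrization (this is the $m=1$ instance of the paper's proposition that $\Sms(R_m)=E_{0.5,m}(R_m)$ for odd $m$ and constant width); the oriented area of $\Sms(M)$ as a set is then half the Fourier value, and the asserted identity would fail by a factor of $2$, consistently with $L_M^2=4\pi A_M+8\pi\left|\widetilde{A}_{E_{0.5}(M)}\right|$ holding in that case. Without this part, the proposal establishes only three of the theorem's four claims.
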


The \textit{Wigner caustic} of an oval on the affine symplectic plane was first introduced by Berry, in his celebrated 1977 paper \cite{B1} on the semiclassical limit of Wigner's phase--space representation of quantum states. The Wigner caustic is an example of an \textit{affine $\lambda$--equidistant} (for $\displaystyle\lambda=\frac{1}{2}$). The affine $\lambda$--equidistant is the set of points divided chords connecting points on $M$ where tangent lines to $M$ are parallel in the ratio $\lambda$. There are many papers considering affine equidistants, see \cite{C2, DMR1, DR1, DRS1, DZ1, G3, GWZ1, JJR1, RZ1, Z1}. In \cite{C2, G3} the Wigner caustic is known as the \textit{area evolute} and in \cite{JJR1} is known as the \textit{symmetry defect}. The well--known Blaschke--S\"uss theorem (\cite{G1, L2}) states that if $M$ is an oval then $M$ has at least three \textit{antipodal pairs}, i.e. two points where the tangent lines at these points are parallel and the curvatures are equal. Singularities of the Wigner caustic occur exactly from an antipodal pair of an oval. The Wigner caustic also leads to one of two constructions of improper affine spheres (\cite{CDR1}). In \cite{DZ1} we study the global geometry of the Wigner caustic and equidistants of plane curves, we find many global results, for instance we presented an algorithm to describe smooth branches of equidistants of parametrized closed planar curves. By this algorithm we can find among other things the exact number of smooth branches, the number of inflexion points, the parity of the number of cusps of each branch. In this paper we use this method in Section \ref{SecWCUnion}.

The paper is organized as follows.

In Section \ref{SectRosettes} we study the geometry of affine $\lambda$--equidistants, the Wigner caustic, the Constant Width Measure Set and the Spherical Measure Set of rosettes, i.e. closed smooth regular curves with non--vanishing curvature. We prove generalizations of Theorem \ref{ThmIsoEqOvals} for rosettes.

Section \ref{SecWCUnion} contains theorems on the geometry of branches of affine $\lambda$--equidistants of the union of rosettes.


\section{Rosettes}\label{SectRosettes}

Rosettes were studied in details in many papers, see \cite{CM1, MM1, MM2, Moz1} and the literature therein. In particular well studied ovals are examples of rosettes.

\begin{defn}
Let $m$ be a positive integer. An $m$--\textit{rosette} is a regular closed curve with non--vanishing curvature of the rotation number equal to $m$.
\end{defn}

\begin{rem}
By the above definition we get also that every rosette is positively oriented.
\end{rem}

Let $R_{m}$ be an $m$--rosette. Let $M$ be a regular closed curve.

\begin{defn}\label{parallelpair}
A pair of points $a,b\in M$ ($a\neq b$) is called a \textit{parallel pair} if the tangent lines to $M$ at $a$ and $b$ are parallel.
\end{defn}

\begin{defn}\label{equidistantSet}
An \textit{affine} $\lambda$\textit{--equidistant} is the following set
$$E_{\lambda}(M)=\left\{\lambda a+(1-\lambda)b\ \big|\ a,b \text{ is a parallel pair of } \M\right\}.$$

The set $\Eq_{0.5}(\M)$ will be called the \textit{Wigner caustic} of $\M$.
\end{defn}

Note that, for any given $\lambda\in\mathbb{R}$, we have $E_{\lambda}(M)=E_{1-\lambda}(M)$. Thus, the case $\displaystyle\lambda=\frac{1}{2}$ is special. In particular we have $E_0(M)=E_1(M)=M$.

If $M$ is a generic closed regular curve then $E_{\lambda}(M)$ for $\lambda\neq 0, 1$ is a union of smooth parametrized curves. Each of these curves we will call a \textit{smooth branch} of $E_{\lambda}(M)$. Generically each such branch has at most cusp singularities  (\cite{B1, GZ1}).

In Fig. \ref{PictureBranchesWC} we present (i) a rosette $R_4$, (ii) $E_{0.4}(R_4)$, (iii) $E_{0.5}(R_4)$ and in (iv-viii) different smooth branches of $E_{0.5}(R_4)$.

\begin{figure}[h]
\centering
\includegraphics[scale=0.195]{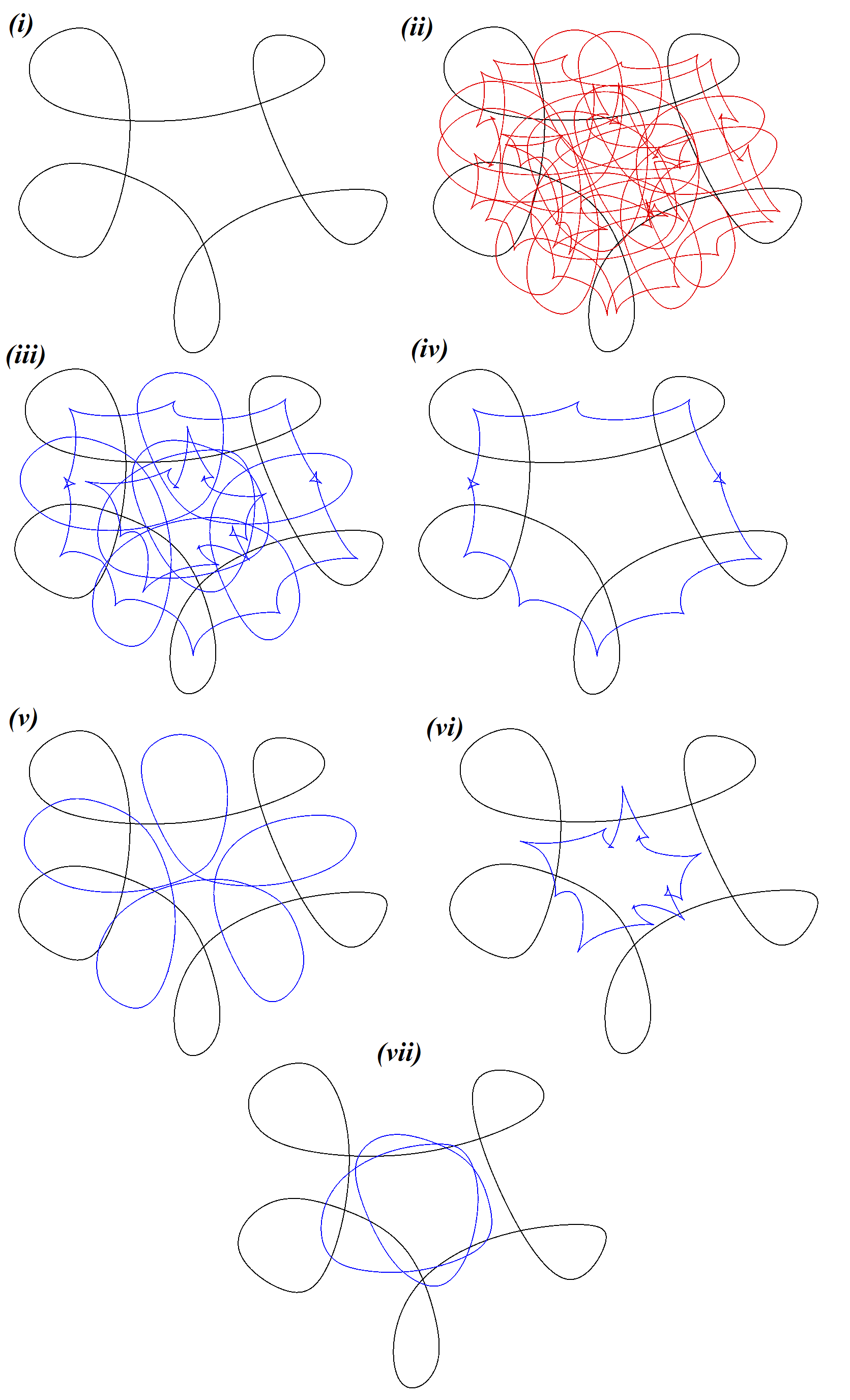}
\caption{}
\label{PictureBranchesWC}
\end{figure}

\begin{defn}
A smooth curve $\mathcal{L}$ is a \textit{convex loop} if $\mathcal{L}$ is parametrized by a smooth function $f:[0,b]\to\mathbb{R}^2$ such that the region bounded by $\mathcal{L}$ is strictly convex, $f(0)=f(b)$ and vectors $f'(0)$, $f'(b)$ are linearly independent.
\end{defn}

In \cite{DZ1} we study the geometry of $E_{\lambda}$ of rosettes and obtain the following result.

\begin{thm}\label{ThmGeometryRosettesDZ}\cite{DZ1}
Let $R_m$ be a generic rosette. Then
\begin{enumerate}[(i)]
\item the number of smooth branches of $E_{0.5}(R_m)$ is equal to $m$,
\item at least $\displaystyle\left\lfloor\frac{m}{2}\right\rfloor$ branches of $E_{0.5}(R_m)$ are rosettes,
\item $m-1$ branches of $E_{0.5}(R_m)$ have a rotation number equal to $m$ and one branch has a rotation number equal to $\displaystyle\frac{m}{2}$,
\item every smooth branch of $E_{0.5}(R_m)$ has an even number of cusps if $m$ is even,
\item exactly one branch of $E_{0.5}(R_m)$ has an odd number of cusps if $m$ is odd,
\item cusps of $E_{0.5}(R_n)$ created from convex loops of $M$ are in the same smooth branch of $E_{0.5}(R_m)$,
\item the total number of cusps of $E_{0.5}(R_m)$ is not smaller than $2$,
\item the number of smooth branches of $E_{\lambda}(R_m)$ for $\displaystyle\lambda\neq 0, 0.5, 1$ is equal to $2m-1$,
\item the rotation number of every smooth branch of $E_{\lambda}(R_m)$ for $\displaystyle\lambda\neq 0, 0.5, 1$ is equal to $m$,
\item at least $m-1$ branches of $E_{\lambda}(R_m)$ for $\displaystyle\lambda\in\left(0,0.5\right)\cup\left(0.5,1\right)$ are rosettes,
\item at least $m$ branches of $E_{\lambda}(R_m)$ for $\lambda\in(-\infty,0)\cup(1,\infty)$ are rosettes.
\end{enumerate}
\end{thm}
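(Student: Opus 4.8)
The plan is to exploit the non-vanishing curvature to reparametrize $R_m$ by its tangent angle and then to reduce every one of the eleven assertions to the sign and zero-set of a single scalar function along each candidate branch. Since $R_m$ has non-vanishing (say positive) curvature and rotation number $m$, the tangent angle $\theta$ is a strictly monotone function of arc length, so I may take $\theta$ itself as the parameter, with $\theta$ ranging over $[0,2\pi m]$ and $\gamma(\theta+2\pi m)=\gamma(\theta)$. Writing $\rho(\theta)=1/\kappa(\theta)>0$ for the radius of curvature, one has $\gamma'(\theta)=\rho(\theta)(\cos\theta,\sin\theta)$, and therefore $\gamma'(\theta+k\pi)=(-1)^k\rho(\theta+k\pi)(\cos\theta,\sin\theta)$. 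This last identity is the engine of the whole argument.

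Next I would describe parallel pairs: $\gamma(\theta_1),\gamma(\theta_2)$ are a parallel pair exactly when $\theta_2\equiv\theta_1\pmod\pi$, so the parallel partners of $\gamma(\theta)$ are $\gamma(\theta+k\pi)$ for $k=1,\dots,2m-1$. Each $k$ gives a candidate branch
\[
E_\lambda^k(\theta)=\lambda\gamma(\theta)+(1-\lambda)\gamma(\theta+k\pi),\qquad \theta\in[0,2\pi m],
\]
and $E_\lambda(R_m)$ is the union of their images. Differentiating and using the identity above yields
\[
\frac{d}{d\theta}E_\lambda^k(\theta)=g_k(\theta)\,(\cos\theta,\sin\theta),\qquad g_k(\theta)=\lambda\rho(\theta)+(1-\lambda)(-1)^k\rho(\theta+k\pi).
\]
Thus the tangent to every branch is always a multiple of $(\cos\theta,\sin\theta)$, cusps occur precisely at zeros of $g_k$, and the parity dichotomy in $k$ becomes transparent. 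For $\lambda\in(0,1)$ and $k$ even, $g_k$ is a sum of positive terms, hence $g_k>0$, the branch is regular, and its oriented tangent turns by $2\pi m$; for $k$ odd, $g_k(\theta)=\lambda\rho(\theta)-(1-\lambda)\rho(\theta+k\pi)$ may change sign, and for $\lambda=\tfrac12$ it vanishes exactly where $\rho(\theta)=\rho(\theta+k\pi)$, i.e.\ at antipodal pairs. Since $g_k$ is continuous and periodic its zeros over a full period number evenly, so the oriented-tangent reversals cancel in pairs; this gives rotation number $m$ for every branch traced over $[0,2\pi m]$, yielding (viii), (ix), and reducing the cusp statements (iv), (v), (vii) to counting sign changes of $\rho(\theta)-\rho(\theta+k\pi)$, i.e.\ to a Blaschke--S\"uss-type count of antipodal pairs. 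In particular, $\int(\rho(\theta)-\rho(\theta+k\pi))\,d\theta=0$ over a full period forces at least two sign changes on some odd-$k$ branch, giving (vii).

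The branch count comes from the identification $E_\lambda^{2m-k}(\theta)=E_{1-\lambda}^k(\theta-k\pi)$, obtained by the substitution $s=\theta-k\pi$ together with $E_\lambda(M)=E_{1-\lambda}(M)$. For $\lambda=\tfrac12$ this collapses $k$ and $2m-k$ to the same image, and since $E_{1/2}^k$ has minimal period $2\pi m$ unless $k\equiv 0\pmod m$, the value $k=m$ is special: it has period $m\pi$ and hence is traced with tangent turning only $m\pi$, giving rotation number $m/2$, while the remaining values pair up into $m-1$ branches of rotation number $m$ — this is exactly (i) and (iii). The same count shows the even-$k$ (cusp-free) branches are honest rosettes, and tallying them under the pairing $k\leftrightarrow 2m-k$ produces the bound $\lfloor m/2\rfloor$ of (ii); repeating the sign analysis of $g_k$ for $\lambda\in(0,\tfrac12)\cup(\tfrac12,1)$ and for $\lambda\notin[0,1]$ (where the roles of even and odd $k$ interchange because $\lambda$ and $1-\lambda$ acquire opposite signs) gives (x) and (xi).

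The genuinely hard parts, which is where I expect to spend most of the effort, are threefold. First, proving the cusp-free branches are actually rosettes requires a second-order computation showing their curvature does not vanish, and this is where a genericity hypothesis on $R_m$ is needed. Second, the sharp parity results (iv) and (v) and the lower bound (vii) demand a careful global accounting: one must show that on the special half-period branch $k=m$ the number of sign changes of $g_m$ has the parity of $m$, and that all other branches, traced over the full period, carry an even cusp count — this is the delicate turning-number bookkeeping at cusps. Third, the localization statement (vi) that cusps arising from convex loops of $R_m$ lie on a single branch requires tracking which shift $k$ each convex loop contributes to, for which I would invoke the decomposition of a rosette into convex loops together with the explicit parametrization above, as developed by the branch-counting algorithm of \cite{DZ1}.
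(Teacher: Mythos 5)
First, a point of orientation: the paper does not prove Theorem \ref{ThmGeometryRosettesDZ} at all --- it is imported verbatim from \cite{DZ1} --- so the only fair comparison is with the support--function machinery the paper deploys around it and with the glueing--scheme formalism of \cite{DZ1} recalled in Section \ref{SecWCUnion}. Measured against that, your framework is essentially the same one: your $g_k(\theta)=\lambda\rho(\theta)+(-1)^k(1-\lambda)\rho(\theta+k\pi)$ is exactly the paper's $\rho_{\lambda,k}$ coming from the branch support function (\ref{SupportBranchWC}), your identification $E_{\lambda}^{2m-k}(\theta)=E_{1-\lambda}^{k}(\theta-k\pi)$ is what collapses the $2m-1$ shifts to $m$ branches at $\lambda=\tfrac{1}{2}$, and your half--period observation for $k=m$ is Remark \ref{RemDoubleCov}. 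Your counts in (i)--(iii) and (viii)--(xi) are correct as sketched. One of your three announced ``hard parts'' is in fact trivial in your own parametrization: for a branch given by monotone normal angle $\theta$, the curvature is $\kappa=d\theta/ds=1/\left|g_k(\theta)\right|$, so regularity of the even--$k$ branches (positivity of $g_k$) \emph{already} implies non--vanishing curvature; no genericity and no second--order computation are needed for (ii), (x), (xi).

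There is, however, a genuine gap, and it sits in (vii) at $m=1$. Your integral argument ($\int_0^{2m\pi}\big(\rho(\theta)-\rho(\theta+k\pi)\big)d\theta=0$ forces at least two sign changes) delivers $\geqslant 2$ cusps only on an odd--$k$ branch with $k\neq m$, i.e.\ a branch traced over the full period $2m\pi$. For $m=1$ no such branch exists: the unique branch is $k=m=1$, and there the antiperiodicity $f(\theta+\pi)=-f(\theta)$ of $f(\theta)=\rho(\theta)-\rho(\theta+\pi)$ pairs the zeros $\theta_0$ and $\theta_0+\pi$, which are identified under the double covering of Remark \ref{RemDoubleCov}; your two zeros in $[0,2\pi)$ therefore yield only \emph{one} cusp, not two. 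Closing this case requires the Blaschke--S\"uss theorem (every oval has at least three antipodal pairs), which gives $\geqslant 3$; for $m\geqslant 2$ your argument is fine since an odd $k\neq m$ always exists. Relatedly, (iv)--(v) are announced rather than executed, and the one missing observation is precisely this antiperiodicity: $f(\theta+m\pi)=-f(\theta)$ forces an odd number of sign changes of $f$ on $[0,m\pi)$ when $m$ is odd (endpoints have opposite signs generically), while every other odd--$k$ branch sees a full--period, hence even, count --- that is the whole content of (iv)--(v). Finally, (vi) you simply delegate to the algorithm of \cite{DZ1}; the mechanism your framework suggests (cusps born from convex loops are antipodal pairs with shift exactly $\pi$, hence all land on the single branch $k=1$, identified with $k=2m-1$) is the right one, but as written it is a citation, not a proof.
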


In this section we improve Theorem \ref{ThmGeometryRosettesDZ} - see Theorem \ref{ThmLenWCRm}, Theorem \ref{ThmRosettesOfTheWC} and Theorem \ref{ThmIsoEq}.

Let us consider an oval $R_1$. Take a point $\Theta$ as the origin of our frame. Let $p$ be the oriented perpendicular distance from $\Theta$ to the tangent line at a point on $R_1$. Let $\theta$ be the oriented angle from the positive $x_1$--axis to this perpendicular ray. Clearly, $p$ is a $2\pi$--periodic function. It is well known (see for instance \cite{G4, H2, San}) that a parameterization of $R_1$ in terms of $\theta, p(\theta)$ is as follows
\begin{align}\label{OvalParameterization}
\gamma_{R_1}(\theta) &=\big(\gamma_{R_1, 1}(\theta),\gamma_{R_1, 2}(\theta)\big)=\big(p(\theta)\cos\theta-p'(\theta)\sin\theta, p(\theta)\sin\theta+p'(\theta)\cos\theta\big).
\end{align}
The pair $\theta, p(\theta)$ is known as the \textit{polar tangential coordinate} on $R_1$ and $p(\theta)$ its \text{Minkowski's support function}.

Then, the curvature $\kappa$ of $R_1$ is in the following form
\begin{align}\label{CurvatureOval}
\displaystyle \kappa(\theta)=\frac{d\theta}{ds}=\frac{1}{p(\theta)+p''(\theta)}>0,
\end{align}
or equivalently, the radius of a curvature $\rho$ of $R_1$ is given by
\begin{align}\label{RadiusCurvOval}
\rho(\theta)=\frac{ds}{d\theta}=p(\theta)+p''(\theta).
\end{align}
Let $L_{R_1}$ and $A_{R_1}$ be the length of $R_1$ and the area it bounds, respectively. Then one can get that
\begin{align}\label{CauchyFormula}
L_{R_1}=\int_{R_1}ds=\int_0^{2\pi}\rho(\theta)d\theta=\int_0^{2\pi}p(\theta)d\theta,
\end{align}
and
\begin{align}\label{BlaschkeFormula}
A_{R_1} & =\frac{1}{2}\int_{R_1}p(\theta)ds\\
\nonumber	&=\frac{1}{2}\int_0^{2\pi}p(\theta)\left[p(\theta)+p''(\theta)\right]d\theta=\frac{1}{2}\int_0^{2\pi}\left[p^2(\theta)-p'^2(\theta)\right]d\theta.
\end{align}
(\ref{CauchyFormula}) and (\ref{BlaschkeFormula}) are known as \textit{Cauchy's formula} and \textit{Blaschke's formula}, respectively.

The notion of support function can be generalized to obtain function of similar properties, but for rosettes (\cite{MM1, San}), but in this case the support function of $R_m$ is $2m\pi$--periodic.

Let $p$ be the support function of $R_m$ and $\gamma$ be the parameterization of $R_m$.

Let $E_{0.5, k}(R_m)$ for $k=1, 2, \ldots, m$ be different branches of the Wigner caustic of $R_{m}$. Then by \cite{DZ1} the parameterization of $E_{0.5, k}(R_m)$ is as follows.
\begin{align}
\gamma_{0.5, k}(\theta)=\frac{1}{2}\big(\gamma(\theta)+\gamma(\theta+k\pi)\big).
\end{align}
\begin{rem}\label{RemDoubleCov}
The map $R_m\ni\gamma(\theta)\mapsto\gamma_{0.5, m}(\theta)\in E_{0.5, m}(R_m)$ is the double covering of $E_{0.5, m}(R_m)$.
\end{rem}

Then the support function of $E_{0.5, k}(R_m)$ is in the following form.
\begin{align}\label{SupportBranchWC}
p_{\lambda, k}(\theta)=\lambda p(\theta)+(-1)^k (1-\lambda) p(\theta+k\pi).
\end{align}

In Fig. \ref{PictureBranchesWC}(iv) there is $E_{0.5, 1}(R_4)$, in (v) $E_{0.5, 2}(R_4)$, in (vi) $E_{0.5, 3}(R_4)$ and in (vii) $E_{0.5, 4}(R_4)$.

\begin{thm}\label{ThmLenWCRm}
\begin{enumerate}[(i)]
\item If $k$ is even and $k<m$ then
\begin{align*}
L_{E_{0.5, k}(R_m)}=L_{R_m}.
\end{align*}
\item If $k$ is even and $k=m$ then
\begin{align*}
2L_{E_{0.5, k}(R_m)}=L_{R_m}.
\end{align*}
\item If $k$ is odd and $k<m$ then
\begin{align*}
L_{E_{0.5, k}(R_m)}\leqslant L_{R_m}.
\end{align*}
\item If $k$ is odd and $k=m$ then
\begin{align*}
2L_{E_{0.5, k}(R_m)}\leqslant L_{R_m}.
\end{align*}
\item If $k$ is even and $\lambda\in(0,1)-\{0.5\}$, then
\begin{align*}
L_{E_{\lambda, k}(R_m)}=L_{R_m}
\end{align*}
\item If $k$ is odd and $\lambda\in(0,1)-\{0.5\}$, then
\begin{align*}
L_{E_{\lambda, k}(R_m)}\leqslant L_{R_m}
\end{align*}
\item If $k$ is odd and $\lambda\in(-\infty,0)\cup(1,\infty)$, then
\begin{align*}
L_{E_{\lambda, k}(R_m)}=L_{R_m}
\end{align*}
\item If $k$ is even and $\lambda\in(-\infty,0)\cup(1,\infty)$, then
\begin{align*}
L_{E_{\lambda, k}(R_m)}\leqslant (|\lambda|+|1-\lambda|)L_{R_m}
\end{align*}
\end{enumerate}
\end{thm}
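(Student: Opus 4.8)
The plan is to read every length off the support function, so that each of the eight statements reduces to Cauchy's formula together with the positivity of the radius of curvature. First I would record the rosette versions of \eqref{CurvatureOval}--\eqref{CauchyFormula}: because $p$ is $2m\pi$--periodic, the radius of curvature $\rho=p+p''$ is strictly positive, $L_{R_m}=\int_0^{2m\pi}\rho(\theta)\,d\theta=\int_0^{2m\pi}p(\theta)\,d\theta$, and by periodicity $\int_0^{2m\pi}\rho(\theta+k\pi)\,d\theta=L_{R_m}$ for every $k$. This is the only input about $R_m$ that the argument will use.

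Next I would parametrize the branch by $\gamma_{\lambda,k}(\theta)=\lambda\gamma(\theta)+(1-\lambda)\gamma(\theta+k\pi)$, whose support function is exactly $p_{\lambda,k}$ from \eqref{SupportBranchWC}. Differentiating gives $\gamma_{\lambda,k}'(\theta)=\rho_{\lambda,k}(\theta)\,(-\sin\theta,\cos\theta)$ with
\[
\rho_{\lambda,k}(\theta)=p_{\lambda,k}(\theta)+p_{\lambda,k}''(\theta)=\lambda\,\rho(\theta)+(-1)^k(1-\lambda)\,\rho(\theta+k\pi),
\]
so $|\gamma_{\lambda,k}'(\theta)|=|\rho_{\lambda,k}(\theta)|$ and the length of the branch equals $\tfrac1N\int_0^{2m\pi}|\rho_{\lambda,k}(\theta)|\,d\theta$, where $N$ counts how many times $\theta\mapsto\gamma_{\lambda,k}(\theta)$ runs over the branch as $\theta$ varies over $[0,2m\pi]$. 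I would check that $N=1$ in every regime except the self--paired offset $\lambda=0.5,\ k=m$, where Remark \ref{RemDoubleCov} supplies the double covering $N=2$; this is precisely what yields the factor $2$ in (ii) and (iv).

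Everything then comes down to the signs of the two coefficients $\lambda$ and $(-1)^k(1-\lambda)$ multiplying the positive functions $\rho(\theta)$ and $\rho(\theta+k\pi)$. When the coefficients share a sign, $\rho_{\lambda,k}$ keeps that sign, the branch is again a rosette, and the integral collapses to $\big|\int_0^{2m\pi}p_{\lambda,k}\,d\theta\big|=\big|\lambda+(-1)^k(1-\lambda)\big|\,L_{R_m}$; when they have opposite signs the triangle inequality gives $\int_0^{2m\pi}|\rho_{\lambda,k}(\theta)|\,d\theta\le(|\lambda|+|1-\lambda|)L_{R_m}$. For even $k$ the coefficients agree exactly for $\lambda\in(0,1)$, where $\big|\lambda+(1-\lambda)\big|=1$ returns the equalities (i), (ii), (v), while $\lambda\notin(0,1)$ gives the bound (viii); for odd $k$ they agree exactly for $\lambda\notin(0,1)$, returning the rosette equality (vii) of length $\big|\int_0^{2m\pi}p_{\lambda,k}\,d\theta\big|$, while $\lambda\in(0,1)$, where $|\lambda|+|1-\lambda|=1$, yields the bounds (iii), (iv), (vi).

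I expect the main obstacle to be bookkeeping rather than any single estimate. The one genuinely non--obvious covering number is $N=2$ at $k=m,\ \lambda=0.5$, which must be pinned down carefully: for $\lambda\neq0.5$ the same offset gives $N=1$, since $\gamma_{\lambda,m}(\theta+m\pi)-\gamma_{\lambda,m}(\theta)=(2\lambda-1)\big(\gamma(\theta+m\pi)-\gamma(\theta)\big)\neq0$, and one checks analogously that no proper sub--period occurs for $k<m$. One must also justify that the arc length is literally $\int|\rho_{\lambda,k}|$ across the cusps, i.e.\ at the zeros of $\rho_{\lambda,k}$ where $\gamma_{\lambda,k}'$ vanishes; granting this, the choice between equality and strict inequality is dictated entirely by whether $\rho_{\lambda,k}$ retains a constant sign, which is exactly the same--sign condition on the two coefficients.
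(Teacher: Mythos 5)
Your argument is, in substance, the paper's own proof: the paper likewise reads everything off the support function, writing $\rho_{0.5,k}(\theta)=\frac{1}{2}\big|\rho(\theta)+(-1)^k\rho(\theta+k\pi)\big|$, integrating the radius of curvature over $[0,2m\pi)$, splitting into the same-sign case (equality) and the opposite-sign case (triangle inequality), and invoking Remark \ref{RemDoubleCov} for the factor $2$ at $k=m$. The paper writes out only (i)--(iv) and declares (v)--(viii) ``analogous''; your general-$\lambda$ bookkeeping, including the covering-number check $\gamma_{\lambda,m}(\theta+m\pi)-\gamma_{\lambda,m}(\theta)=(2\lambda-1)\big(\gamma(\theta+m\pi)-\gamma(\theta)\big)$ that the paper leaves implicit, is exactly that intended argument, carried out more carefully.

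There is, however, one genuine discrepancy you papered over: your (correct) computation does not prove (vii) as printed --- it refutes it. For $k$ odd and $\lambda\in(-\infty,0)\cup(1,\infty)$ the coefficients $\lambda$ and $(-1)^k(1-\lambda)=\lambda-1$ have the same sign, so your own formula gives
\begin{align*}
L_{E_{\lambda, k}(R_m)}=\left|\int_0^{2m\pi}p_{\lambda,k}(\theta)\,d\theta\right|=|2\lambda-1|\,L_{R_m}=\big(|\lambda|+|1-\lambda|\big)L_{R_m},
\end{align*}
which equals $L_{R_m}$ for no $\lambda$ outside $[0,1]$, yet you assert this ``returns the rosette equality (vii).'' The statement (vii) is in fact false as printed: for the unit circle $R_1$ with $\lambda=2$, $k=1$, the equidistant is the concentric circle of radius $3$, of length $3L_{R_1}$, matching your constant $|\lambda|+|1-\lambda|=3$ and not $L_{R_1}$. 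So you should state explicitly that your sign analysis yields the corrected equality $L_{E_{\lambda,k}(R_m)}=\big(|\lambda|+|1-\lambda|\big)L_{R_m}$ --- which also makes the bound in (viii) the natural companion estimate --- rather than claim agreement with the printed text. Apart from this labeling issue, everything (the identity $|\gamma_{\lambda,k}'|=|\rho_{\lambda,k}|$ across the isolated zeros, $N=1$ whenever $(\lambda,k)\neq(0.5,m)$, and $N=2$ only in the self-paired case) is sound and coincides with the paper's method.
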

\begin{proof}

We prove (i--iv), other points are analogous.

Let us notice that $\rho_{0.5, k}(\theta)=\frac{1}{2}\big|\rho(\theta)+(-1)^k\rho(\theta+k\pi)\big|$.

(i) If $k$ is even and $k<m$, then
\begin{align*}
L_{E_{0.5, k}(R_m)} &=\int_0^{2m\pi}\rho_{0.5, k}(\theta)d\theta\\
	&=\frac{1}{2}\int_0^{2m\pi}(\rho(\theta)+\rho(\theta+k\pi)d\theta\\
	&=\frac{1}{2}(L_{R_m}+L_{R_{m}})=L_{R_m}.
\end{align*}

(ii) It is a consequence of (i) and Remark \ref{RemDoubleCov}.

(iii) If $k$ is odd and $k<m$, then
\begin{align*}
L_{E_{0.5, k}(R_m)} &=\int_0^{2m\pi}\rho_{0.5, k}(\theta)d\theta\\
	&=\frac{1}{2}\int_0^{2m\pi}|\rho(\theta)-\rho(\theta+k\pi|d\theta\\
	&\leqslant \frac{1}{2}(L_{R_m}+L_{R_{m}})=L_{R_m}.
\end{align*}

(iv) It is a consequence of (iii) and Remark \ref{RemDoubleCov}.

\end{proof}

The support function $p$ of $R_{m}$ has the following Fourier series.
\begin{align}\label{FourierP}
p(\theta) = a_0+\sum_{n=1}^{\infty}\left(a_n\cos\frac{n\theta}{m}+b_n\sin\frac{n\theta}{m}\right).
\end{align}
Therefore we get the following equalities.
\begin{align}
p'(\theta) &= \frac{1}{m}\sum_{n=1}^{\infty}n\left(-a_n\sin\frac{n\theta}{m}+b_n\cos\frac{n\theta}{m}\right).\\
p''(\theta) &= -\frac{1}{m^2}\sum_{n=1}^{\infty}n^2\left(a_n\cos\frac{n\theta}{m}+b_n\sin\frac{n\theta}{m}\right).\\
\label{FourierPradius} \rho(\theta) &=p(\theta)+p''(\theta) =a_0-\frac{1}{m^2}\sum_{n=1}^{\infty}(n^2-m^2)\left(a_n\cos\frac{n\theta}{m}+b_n\sin\frac{n\theta}{m}\right).\\
\label{FourierLength} L_{R_m} &=\int_0^{2m\pi}|\gamma'(\theta)|d\theta=\int_0^{2m\pi}\rho(\theta)d\theta=\int_0^{2\pi}p(\theta)d\theta=2\pi ma_0.\\
\label{AreaRFourier} \widetilde{A}_{R_m} &=\frac{1}{2}\int_0^{2m\pi}\Big(p^2(\theta)-p'^2(\theta)\Big)d\theta=m\pi a_0^2-\frac{\pi}{2m}\sum_{n=1}^{\infty}(n^2-m^2)(a_n^2+b_n^2).
\end{align}

\begin{thm}\label{ThmRosettesOfTheWC}
Let $R_m$ be a generic rosette. Then exactly $\displaystyle\left\lfloor\frac{m}{2}\right\rfloor$ branches of $E_{0.5}(R_m)$ are  rosettes.
\end{thm}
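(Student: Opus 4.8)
The plan is to decide, for each index $k\in\{1,2,\ldots,m\}$, whether the branch $E_{0.5,k}(R_m)$ is a rosette by testing whether its signed radius of curvature vanishes, and then to count the admissible indices. Concretely, I would first read off the signed radius of curvature from the support function: setting $\lambda=\tfrac12$ in (\ref{SupportBranchWC}) gives $p_{0.5,k}(\theta)=\tfrac12\big(p(\theta)+(-1)^k p(\theta+k\pi)\big)$, and since any curve with support function $P$ has signed radius of curvature $P+P''$, termwise differentiation yields
\[
\rho_{0.5,k}^{\mathrm{sgn}}(\theta)=\tfrac12\big(\rho(\theta)+(-1)^k\rho(\theta+k\pi)\big),
\]
which is exactly the expression appearing (inside the absolute value) in the proof of Theorem \ref{ThmLenWCRm}, now kept with its sign. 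In the tangential-angle parametrization the branch is regular with non-vanishing curvature, hence a rosette, precisely when $\rho_{0.5,k}^{\mathrm{sgn}}$ is nowhere zero; a simple zero of $\rho_{0.5,k}^{\mathrm{sgn}}$ corresponds to a cusp.

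For even $k$ one has $(-1)^k=1$, so $\rho_{0.5,k}^{\mathrm{sgn}}(\theta)=\tfrac12\big(\rho(\theta)+\rho(\theta+k\pi)\big)$. Because $R_m$ is a rosette its radius of curvature $\rho$ is strictly positive, so this sum is strictly positive and the branch is a rosette; when $k=m$ (possible only if $m$ is even) this is the doubly covered branch of Remark \ref{RemDoubleCov}, a rosette of rotation number $\tfrac{m}{2}$. Since there are exactly $\big\lfloor\tfrac{m}{2}\big\rfloor$ even numbers in $\{1,\ldots,m\}$, this already produces $\big\lfloor\tfrac{m}{2}\big\rfloor$ rosette branches.

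For odd $k$ one has $(-1)^k=-1$, so $\rho_{0.5,k}^{\mathrm{sgn}}(\theta)=\tfrac12\big(\rho(\theta)-\rho(\theta+k\pi)\big)=:g(\theta)$. Using (\ref{FourierLength}) and the $2m\pi$-periodicity of $\rho$, the substitution $u=\theta+k\pi$ shows $\int_0^{2m\pi}\rho(\theta+k\pi)\,d\theta=L_{R_m}=\int_0^{2m\pi}\rho(\theta)\,d\theta$, so $g$ has mean zero over its period. As $g$ is continuous and $2m\pi$-periodic, it either vanishes identically or changes sign and hence has a zero by the intermediate value theorem; for a generic $R_m$ the function $g$ is not identically zero and its zeros are simple, so $\rho_{0.5,k}^{\mathrm{sgn}}$ changes sign and the branch carries a cusp, so it is not a rosette. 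Consequently none of the $\big\lceil\tfrac{m}{2}\big\rceil$ odd-index branches is a rosette, and exactly $\big\lfloor\tfrac{m}{2}\big\rfloor$ branches are rosettes, as claimed.

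The step I expect to be the main obstacle is the odd case: one must exclude the degenerate possibility $g\equiv 0$ and verify that a sign change of $\rho_{0.5,k}^{\mathrm{sgn}}$ really yields a cusp rather than a higher-order degeneracy. This is exactly where the genericity hypothesis on $R_m$ enters, guaranteeing transverse zeros of $g$, and the resulting presence of cusps on precisely the odd-index branches is consistent with parts (iv) and (v) of Theorem \ref{ThmGeometryRosettesDZ}.
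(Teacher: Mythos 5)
Your proposal is correct and follows essentially the same route as the paper: the paper also identifies singular points of the branch $E_{0.5,k}(R_m)$ with zeros of $\rho(\theta)-\rho(\theta+k\pi)$ and uses the mean-zero-over-a-period argument to force at least two such zeros for odd $k$. The only cosmetic difference is that for even $k$ the paper cites Theorem 6.1 of \cite{DZ1} to conclude those branches are rosettes, whereas you prove it directly from the positivity of $\rho(\theta)+\rho(\theta+k\pi)$ --- which is the same underlying argument made explicit.
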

\begin{proof}
By Theorem 6.1. in \cite{DZ1} we know that branches $E_{0.5, 2}(R_m)$, $E_{0.5, 4}(R_m)$, $\ldots$, $E_{0.5, 2\cdot\left\lfloor 0.5m\right\rfloor}(R_m)$ are rosettes. We will show that other branches has at least one singular point. It is enough to show that there exists $\theta$ such that $\rho_{0.5, k}(\theta)=0$ for $\displaystyle k=1, 3, \ldots, 2\left\lceil \frac{m}{2}\right\rceil -1$. By (\ref{SupportBranchWC}) we get that
\begin{align*}
\rho_{0.5, k}(\theta) &=0,\\
p_{0.5, k}(\theta)+p''_{0.5,k}(\theta) &=0,\\
\rho(\theta)-\rho(\theta+k\pi) &=0.\\
\end{align*}
So the number of singular points of $E_{0.5, k}(R_m)$ is equal to the number of zeros of $f(\theta)=\rho(\theta)-\rho(\theta+k\pi)$ in the range $[0,2m\pi)$. Since $f(0)=f(2m\pi)$ and $\displaystyle\int_0^{2m\pi}f(\theta)d\theta=0$, therefore there are at least two zeros of $f(\theta)$ in $[0,2m\pi)$.

\end{proof}

The lower bound of $2$ cusps of the branches of the Wigner caustic which are not rosettes can be obtained, see Fig. \ref{FigWC2Cusps}.

\begin{figure}[h]
\centering
\includegraphics[scale=0.3]{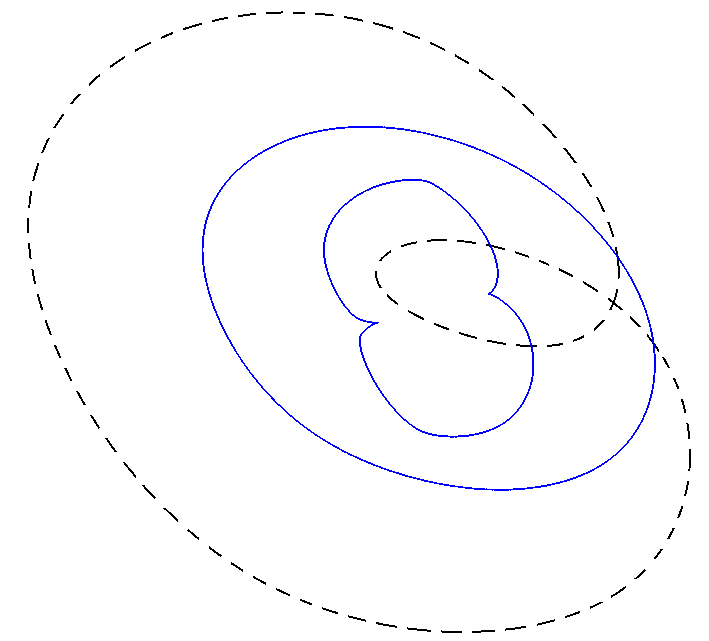}
\caption{A $2$--rosette (the dashed line) and two branches of the Wigner caustic.}
\label{FigWC2Cusps}
\end{figure}

\begin{cor}
A rosette $R_m$ has at least $2\displaystyle\left\lceil\frac{m}{2}\right\rceil$ antipodal pairs.
\end{cor}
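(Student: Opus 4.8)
The plan is to identify antipodal pairs with cusps of the Wigner caustic and to count the latter branch by branch. First I would record the relevant characterization: a parallel pair $\gamma(\theta),\gamma(\theta+k\pi)$ is an antipodal pair precisely when $\rho(\theta)=\rho(\theta+k\pi)$, since the gap $k\pi$ makes the tangent lines automatically parallel and equality of curvatures is exactly this condition on the radii of curvature. Differentiating the branch parameterization gives $\gamma_{0.5,k}'(\theta)=\tfrac12\big[\rho(\theta)+(-1)^k\rho(\theta+k\pi)\big](-\sin\theta,\cos\theta)$, so a cusp of $E_{0.5,k}(R_m)$ occurs exactly at a zero of $\rho_{0.5,k}(\theta)=\tfrac12\big|\rho(\theta)+(-1)^k\rho(\theta+k\pi)\big|$, the quantity already used in the proof of Theorem \ref{ThmLenWCRm}. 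For $k$ even this never vanishes ($\rho>0$), whereas for $k$ odd the cusp condition $\rho_{0.5,k}(\theta)=0$ is exactly the odd-gap antipodal-pair condition $\rho(\theta)=\rho(\theta+k\pi)$. Thus the antipodal pairs I shall count are the cusps of the odd-index branches.

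Next I would count the non-rosette branches. By Theorem \ref{ThmRosettesOfTheWC} exactly $\lfloor m/2\rfloor$ branches (those with $k$ even) are rosettes, so the $\lceil m/2\rceil$ branches with $k$ odd, namely $k=1,3,\ldots,2\lceil m/2\rceil-1$, are not rosettes. For each odd $k$ the cusps of $E_{0.5,k}(R_m)$ are the zeros of $f_k(\theta)=\rho(\theta)-\rho(\theta+k\pi)$ in $[0,2m\pi)$, and exactly as in the proof of Theorem \ref{ThmRosettesOfTheWC} the function $f_k$ is continuous, $2m\pi$-periodic and satisfies $\int_0^{2m\pi}f_k(\theta)\,d\theta=0$ by (\ref{FourierLength}), hence has at least two zeros there. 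So every non-rosette branch carries at least two cusps, each of which determines an antipodal pair of $R_m$.

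To finish I would verify that these antipodal pairs are genuinely distinct, so that the $\lceil m/2\rceil$ non-rosette branches contribute $2\lceil m/2\rceil$ different pairs. Pairs coming from different odd values $k$ are distinct because the gap $k\pi$ with $1\le k\le m$ determines the unordered pair up to the involution $k\mapsto 2m-k$, which sends the admissible range into itself only at $k=m$; and within a fixed branch with $k<m$ two distinct zeros of $f_k$ give distinct pairs since $2k\not\equiv 0\pmod{2m}$, so no two zeros are interchanged by the swap $\theta\mapsto\theta+k\pi$.

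The main obstacle is the single branch $E_{0.5,m}(R_m)$ that arises when $m$ is odd. By Remark \ref{RemDoubleCov} this branch is doubly covered, and here $f_m$ is anti-periodic, $f_m(\theta+m\pi)=-f_m(\theta)$, so the two zeros guaranteed above sit at $\theta_0$ and $\theta_0+m\pi$ and, since $\gamma(\theta_0+2m\pi)=\gamma(\theta_0)$, determine the \emph{same} unordered pair $\{\gamma(\theta_0),\gamma(\theta_0+m\pi)\}$. I therefore must argue that this branch still supplies two of the counted pairs. I expect to do this either by organizing the bookkeeping along the double parameterization on $[0,2m\pi)$, which is the natural reading matching the bound $2\lceil m/2\rceil$, or by sharpening the zero count on the fundamental domain $[0,m\pi)$, using $f_m(m\pi)=-f_m(0)$ together with the odd-cusp parity of this branch recorded in Theorem \ref{ThmGeometryRosettesDZ}(v). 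Settling this double-cover count is the crux; the remaining steps are the routine sign-and-average argument already employed for Theorem \ref{ThmRosettesOfTheWC}.
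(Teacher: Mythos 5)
Your first two paragraphs reconstruct exactly the argument the paper intends: the corollary carries no separate proof there, being read off from the proof of Theorem \ref{ThmRosettesOfTheWC} ($\left\lceil m/2\right\rceil$ non--rosette branches, at least two singular parameter values on each, every singularity arising from an antipodal pair), and your distinctness checks within and across branches are correct. For even $m$ your proof is therefore complete, since every odd $k$ is then $<m$ and the double covering never intervenes. But the crux you flag for odd $m$ is a genuine gap, and neither of your two proposed escapes closes it. Reading the bookkeeping ``along the double parameterization on $[0,2m\pi)$'' merely counts the unordered pair $\{\gamma(\theta_0),\gamma(\theta_0+m\pi)\}$ twice: since $f_m(\theta+m\pi)=-f_m(\theta)$, the two guaranteed zeros are $\theta_0$ and $\theta_0+m\pi$ and they name the same pair of points, so this route proves a statement about parameter values, not about antipodal pairs as defined in the introduction. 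The parity route fails as well: Theorem \ref{ThmGeometryRosettesDZ}(v) only makes the cusp count of $E_{0.5,m}(R_m)$ odd, and it can equal $1$ when $m\geqslant 3$ is odd. Indeed, take $p(\theta)=a_0+a_1\cos\frac{\theta}{m}$ with $a_0$ large; by (\ref{FourierPradius}) the $n=m$ harmonic cancels in $\rho$, and $f_m(\theta)=2a_1\left(1-\frac{1}{m^2}\right)\cos\frac{\theta}{m}$ has exactly two transverse (hence stable under generic perturbation) zeros in $[0,2m\pi)$, i.e.\ exactly one cusp and one antipodal pair of gap $m\pi$. So cusp counting alone yields only $m$, not $m+1=2\left\lceil m/2\right\rceil$, unordered pairs for odd $m$ --- and note that the paper's own implicit multiplication ``$2$ cusps $\times$ $\left\lceil m/2\right\rceil$ branches'' silently elides the same point, since in the proof of Theorem \ref{ThmRosettesOfTheWC} the identification of singular points with zeros of $f$ in $[0,2m\pi)$ is two--to--one on the doubly covered branch (Remark \ref{RemDoubleCov}).

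The statement itself survives, but closing your gap requires input beyond the odd--$k$ cusps. For odd $m\geqslant 3$, use antipodal pairs of \emph{even} gap, which are invisible to the Wigner caustic ($\rho(\theta)+\rho(\theta+k\pi)>0$ never vanishes for even $k$): the function $f_2(\theta)=\rho(\theta)-\rho(\theta+2\pi)$ is continuous, $2m\pi$--periodic, and has zero mean by (\ref{FourierLength}), hence at least two zeros in $[0,2m\pi)$; distinct zeros give distinct unordered pairs (an identification would force $4\equiv 0 \pmod{2m}$), and even--gap pairs are disjoint from odd--gap ones, so the total is at least $(m-1)+1+2=m+2\geqslant 2\left\lceil m/2\right\rceil$. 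For $m=1$ there is no even gap, and one needs the Blaschke--S\"uss count quoted in the paper's introduction: by (\ref{FourierPradius}) the radius of curvature of an oval has no first harmonic, so $f_1$ has Fourier harmonics only of odd order $n\geqslant 3$, and the Sturm--Hurwitz theorem gives at least six zeros, i.e.\ at least three antipodal pairs. With these two patches your argument becomes a complete proof, and in fact a sharper one than the bare count $2\left\lceil m/2\right\rceil$ for odd $m$.
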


\begin{lem}\label{LemFourierAreaBranchWC}
The oriented area of $E_{0.5, k}(R_m)$ for $k<m$ in terms of coefficients of the Fourier series of the Minkowski support function $p$ of $R_m$ is equal
\begin{align}
\label{FourierBranchWC1} \widetilde{A}_{E_{0.5, k}(R_m)} &=\frac{1+(-1)^k}{2}m\pi a_0^2-\frac{\pi}{4m}\sum_{n=1}^{\infty}(n^2-m^2)\left(1+(-1)^k\cos\frac{nk\pi}{m}\right)(a_n^2+b_n^2),
\end{align}
and for $k=m$
\begin{align}
\label{FourierBranchWC2} \widetilde{A}_{E_{0.5, m}(R_m)} &=\frac{1+(-1)^m}{4}m\pi a_0^2-\frac{\pi}{8m}\sum_{n=1}^{\infty}(n^2-m^2)\left(1+(-1)^{n+m}\right)(a_n^2+b_n^2).
\end{align}
\end{lem}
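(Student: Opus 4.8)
The plan is to reduce everything to the Blaschke--type area formula (\ref{AreaRFourier}), applied to the support function of the branch rather than to $p$ itself. First I would isolate the general principle behind (\ref{AreaRFourier}): if a $2m\pi$--periodic function $f$ has Fourier expansion $f(\theta)=A_0+\sum_{n\geqslant 1}\left(A_n\cos\frac{n\theta}{m}+B_n\sin\frac{n\theta}{m}\right)$, then orthogonality of the system $\left\{1,\cos\frac{n\theta}{m},\sin\frac{n\theta}{m}\right\}$ on $[0,2m\pi)$ gives
\begin{align*}
\frac{1}{2}\int_0^{2m\pi}\left(f^2-(f')^2\right)d\theta = m\pi A_0^2-\frac{\pi}{2m}\sum_{n=1}^{\infty}(n^2-m^2)(A_n^2+B_n^2).
\end{align*}
This is exactly the computation already carried out to obtain (\ref{AreaRFourier}), so I may quote it verbatim with arbitrary coefficients $A_0,A_n,B_n$.

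Next I would compute the Fourier coefficients of the branch's support function. Setting $\lambda=\tfrac12$ in (\ref{SupportBranchWC}) gives $p_{0.5,k}(\theta)=\tfrac12\left(p(\theta)+(-1)^k p(\theta+k\pi)\right)$, so it suffices to expand the shifted function $p(\theta+k\pi)$. Applying the angle--addition formulas to $\cos\frac{n(\theta+k\pi)}{m}$ and $\sin\frac{n(\theta+k\pi)}{m}$ shows that the shift rotates the pair $(a_n,b_n)$ through the angle $\frac{nk\pi}{m}$, an orthogonal transformation. Writing $c=\cos\frac{nk\pi}{m}$, $s=\sin\frac{nk\pi}{m}$ and $\epsilon=(-1)^k$, the coefficients of $p_{0.5,k}$ come out as $A_0=\tfrac{1+\epsilon}{2}a_0$, $A_n=\tfrac12\left(a_n(1+\epsilon c)+\epsilon b_n s\right)$ and $B_n=\tfrac12\left(b_n(1+\epsilon c)-\epsilon a_n s\right)$.

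The heart of the argument is the simplification of $A_n^2+B_n^2$, which I expect to be the only genuinely delicate step. Squaring and adding, the cross terms $\pm\,\epsilon a_n b_n s(1+\epsilon c)$ cancel, leaving $A_n^2+B_n^2=\tfrac14(a_n^2+b_n^2)\left[(1+\epsilon c)^2+s^2\right]$; then $\epsilon^2=1$ and $c^2+s^2=1$ collapse the bracket to $2(1+\epsilon c)$, so that $A_n^2+B_n^2=\tfrac12(a_n^2+b_n^2)\left(1+(-1)^k\cos\frac{nk\pi}{m}\right)$. A parallel manipulation gives $A_0^2=\tfrac{1+(-1)^k}{2}a_0^2$, using $(1+(-1)^k)^2=2(1+(-1)^k)$. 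Substituting both into the boxed formula immediately yields (\ref{FourierBranchWC1}) for $k<m$.

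Finally, for $k=m$ the same substitution would overcount, since by Remark \ref{RemDoubleCov} the map $\theta\mapsto\gamma_{0.5,m}(\theta)$ traverses $E_{0.5,m}(R_m)$ twice on $[0,2m\pi)$; I would therefore divide the result by $2$. Here $\cos\frac{nm\pi}{m}=\cos n\pi=(-1)^n$, whence $(-1)^m\cos\frac{nm\pi}{m}=(-1)^{n+m}$, and after halving I obtain (\ref{FourierBranchWC2}). This appeal to the double covering is the sole point where a geometric fact, rather than the Fourier computation, enters the proof.
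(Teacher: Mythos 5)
Your proof is correct and rests on the same underlying mechanism as the paper's — the Blaschke/Parseval area formula applied to the support function of the branch, followed by Remark \ref{RemDoubleCov} to halve the $k=m$ case — but the internal bookkeeping is genuinely different. The paper never computes the Fourier coefficients of $p_{0.5,k}$: it expands the square in $\frac{1}{2}\int_0^{2m\pi}\big(p_{0.5,k}^2-p_{0.5,k}'^2\big)\,d\theta$ to obtain $\frac{1}{2}\widetilde{A}_{R_m}+\frac{(-1)^k}{2}\Psi_{R_m}$, where $\Psi_{R_m}=\frac{1}{2}\int_0^{2m\pi}\big(p(\theta)p(\theta+k\pi)-p'(\theta)p'(\theta+k\pi)\big)\,d\theta$ is a cross term, and then evaluates $\Psi_{R_m}$ by the four mixed-product orthogonality integrals (\ref{IntegralFourierA})--(\ref{IntegralFourierD}). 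You instead observe that the shift $\theta\mapsto\theta+k\pi$ acts on each coefficient pair $(a_n,b_n)$ as a rotation through $\frac{nk\pi}{m}$, compute $A_0,A_n,B_n$ of $p_{0.5,k}$ explicitly, and substitute them into the coefficient-generic version of (\ref{AreaRFourier}); the rotation viewpoint makes the $a_nb_n$ cross terms cancel automatically, giving $A_n^2+B_n^2=\frac{1}{2}(a_n^2+b_n^2)\big(1+(-1)^k\cos\frac{nk\pi}{m}\big)$, which depends only on $a_n^2+b_n^2$. Your implicit claim that (\ref{AreaRFourier}) may be quoted verbatim with arbitrary coefficients is legitimate, since its derivation uses only the orthogonality of $\big\{1,\cos\frac{n\theta}{m},\sin\frac{n\theta}{m}\big\}$ on $[0,2m\pi)$ and nothing particular to $p$. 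The trade-off: the paper's route isolates the reusable quantity $\Psi_{R_m}$ together with the shifted-product integrals, whereas yours is slightly shorter and more self-checking, because an orthogonal transformation of the coefficient pair manifestly preserves sums of squares; the treatment of $k=m$ via the double covering is identical in both.
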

\begin{proof}
By (\ref{SupportBranchWC}) and (\ref{AreaRFourier}) we get the following equalities for $k<m$.
\begin{align*}
\widetilde{A}_{E_{0.5,k}(R_m)} &=\frac{1}{2}\int_0^{2m\pi}\Big(p_{0.5, k}^2(\theta)-p_{0.5, k}'^2(\theta)\Big)d\theta\\
	&=\frac{1}{8}\int_0^{2m\pi}\left[\big(p(\theta)+(-1)^kp(\theta+k\pi)\big)^2-\big(p'(\theta)+(-1)^kp'(\theta+k\pi)\big)\right]d\theta\\
	&=\frac{1}{2}\widetilde{A}_{R_m}+\frac{(-1)^k}{2}\Psi_{R_m},
\end{align*}
where
\begin{align}
\Psi_{R_m}=\frac{1}{2}\int_0^{2m\pi}\Big(p(\theta)p(\theta+k\pi)-p'(\theta)p'(\theta+k\pi)\Big)d\theta.
\end{align}
By (\ref{FourierP}) we get the following:
\begin{align}\label{FourierPkpi}
p(\theta+k\pi)=a_0+\sum_{n=1}^{\infty}\left[a_n\cos\left(\frac{n\theta}{m}+\frac{nk\pi}{m}\right)+b_n\sin\left(\frac{n\theta}{m}+\frac{nk\pi}{m}\right)\right]d\theta.
\end{align}
One can check that
\begin{align}
\label{IntegralFourierA}\int_0^{2m\pi}\cos\frac{n_1\theta}{m}\cos\left(\frac{n_2\theta}{m}+\frac{n_2k\pi}{m}\right)d\theta &= m\pi\cos\frac{n_2k\pi}{m}\cdot\delta_{n_1, n_2},\\
\label{IntegralFourierB}\int_0^{2m\pi}\sin\frac{n_1\theta}{m}\sin\left(\frac{n_2\theta}{m}+\frac{n_2k\pi}{m}\right)d\theta &= m\pi\cos\frac{n_2k\pi}{m}\cdot\delta_{n_1, n_2},\\
\label{IntegralFourierC}\int_0^{2m\pi}\cos\frac{n_1\theta}{m}\sin\left(\frac{n_2\theta}{m}+\frac{n_2k\pi}{m}\right)d\theta &= m\pi\sin\frac{n_2k\pi}{m}\cdot\delta_{n_1, n_2},\\
\label{IntegralFourierD}\int_0^{2m\pi}\sin\frac{n_1\theta}{m}\cos\left(\frac{n_2\theta}{m}+\frac{n_2k\pi}{m}\right)d\theta &= -m\pi\sin\frac{n_2k\pi}{m}\cdot\delta_{n_1, n_2},
\end{align}
where $\delta_{n_1,n_2}$ is the Kronecker delta, i.e. $\delta_{n_1,n_2}=\left\{\begin{array}{l} 1,\ n_1=n_2 \\ 0,\ n_1\neq n_2\end{array}\right.$.

Therefore by (\ref{IntegralFourierA}--\ref{IntegralFourierD}) we can express $\Psi_{R_m}$ in the following way.
\begin{align}\label{PsiFourier}
\Psi_{R_m}&=\frac{1}{2}\int_0^{2m\pi}\Big(p(\theta)p(\theta+k\pi)-p'(\theta)p'(\theta+k\pi)\Big)d\theta\\
\nonumber	&=m\pi a_0^2-\frac{\pi}{2m}\sum_{n=1}^{\infty}(n^2-m^2)\cos\frac{nk\pi}{m}(a_n^2+b_n^2).
\end{align}
By (\ref{AreaRFourier}) and (\ref{PsiFourier}) we get (\ref{FourierBranchWC1}) and by Remark \ref{RemDoubleCov} we get (\ref{FourierBranchWC2})

\end{proof}

\begin{thm}\label{ThmRosWCMM1}( Corollary 5.1 in \cite{MM1})
Each rosette of constant width of the rotation number equal to $m$ is given by the following support function.
\begin{align}\label{SupportRosOfCW}
p_{R_m}(\theta)=a_0+\sum_{\substack{n=1,\\ n\ is\ odd}}^{\infty}\left(a_n\cos\frac{n\theta}{m}+b_n\sin\frac{n\theta}{m}\right).
\end{align}
\end{thm}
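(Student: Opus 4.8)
The plan is to characterize rosettes of constant width analytically via the support function and then read off the Fourier condition. First I would recall what constant width means in terms of the Minkowski support function: a curve has constant width $w$ precisely when, for every parallel pair of points, the distance between the two parallel tangent lines is the constant $w$. In the polar tangential coordinate this is the classical statement that the width in direction $\theta$ equals $p(\theta)+p(\theta+\pi)$, so the curve is of constant width if and only if $p(\theta)+p(\theta+\pi)$ is a constant function of $\theta$. For a rosette of rotation number $m$ the support function is $2m\pi$-periodic rather than $2\pi$-periodic, so I would first confirm that the relevant shift producing a parallel pair and the antipodal tangent direction is still $\theta\mapsto\theta+\pi$ (tangent lines at parameters differing by $\pi$ in $\theta$ point in opposite directions), and hence that the width function is again $\theta\mapsto p(\theta)+p(\theta+\pi)$.

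Next I would impose the constant-width condition $p(\theta)+p(\theta+\pi)=\text{const}$ directly on the Fourier expansion (\ref{FourierP}). Substituting $\theta\mapsto\theta+\pi$ into the series replaces each basis function $\cos\frac{n\theta}{m}$, $\sin\frac{n\theta}{m}$ by $\cos\!\big(\frac{n\theta}{m}+\frac{n\pi}{m}\big)$, $\sin\!\big(\frac{n\theta}{m}+\frac{n\pi}{m}\big)$, exactly as in (\ref{FourierPkpi}) with $k=1$. Adding this to the original series, the coefficient of each harmonic $n$ is multiplied by $\big(1+\cos\frac{n\pi}{m}\big)$ together with a cross term $\sin\frac{n\pi}{m}$ coupling the $a_n$ and $b_n$ modes. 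The key computation is that $p(\theta)+p(\theta+\pi)$ has the form $2a_0+\sum_{n\geqslant 1}\big(A_n\cos\frac{n\theta}{m}+B_n\sin\frac{n\theta}{m}\big)$ whose non-constant part must vanish identically; by orthogonality of the basis functions on $[0,2m\pi)$ this forces every harmonic coefficient with $n\geqslant 1$ to be zero. I would then verify that the surviving harmonics are exactly those for which the combining factor does not annihilate the mode.

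The arithmetic heart of the argument is therefore the identity governing when the harmonic $n$ survives. For even $n$, the shift $\frac{n\pi}{m}$ interacts with the period so that $\cos\frac{n\theta}{m}$ and $\sin\frac{n\theta}{m}$ are reproduced by the $\theta\mapsto\theta+\pi$ shift in a way that makes $p(\theta)+p(\theta+\pi)$ retain a nonzero $n$th harmonic unless $a_n=b_n=0$; for odd $n$ the two contributions cancel, so $a_n,b_n$ are unconstrained. Concretely, I expect the even harmonics to satisfy the relation forcing $a_n=b_n=0$ and the odd harmonics to drop out of the width function automatically, which is precisely the statement (\ref{SupportRosOfCW}) that only odd-indexed harmonics (together with the constant $a_0$) may appear. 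I would check the boundary and parity bookkeeping carefully here, since the factor $\frac{n\pi}{m}$ depends on the arithmetic relationship between $n$ and $m$, and the clean split into ``$n$ odd survives, $n$ even dies'' is the assertion that needs the combining coefficients to behave as claimed.

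The main obstacle I anticipate is making the parity argument fully rigorous at the level of the coupled $(a_n,b_n)$ system rather than treating the harmonics as decoupled. Because the shift $\theta\mapsto\theta+\pi$ mixes $\cos\frac{n\theta}{m}$ and $\sin\frac{n\theta}{m}$ through the nonzero cross factor $\sin\frac{n\pi}{m}$, the vanishing of the $n$th harmonic of $p(\theta)+p(\theta+\pi)$ is a $2\times 2$ linear condition on $(a_n,b_n)$, and I would need to show that for even $n$ this $2\times 2$ system has only the trivial solution while for odd $n$ it is identically satisfied. Verifying that the relevant $2\times 2$ matrix is invertible exactly for even $n$ (equivalently, that its determinant vanishes exactly for odd $n$) is the step where the constant-width restriction is genuinely used; once this is established the Fourier form (\ref{SupportRosOfCW}) follows immediately. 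Since this is quoted as Corollary 5.1 of \cite{MM1}, I would present the computation concisely and cite that source for the remaining verification.
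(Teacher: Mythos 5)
Your argument hinges on identifying the width function of a rosette as $\theta\mapsto p(\theta)+p(\theta+\pi)$, and that is precisely where it breaks for $m>1$. The verification you postponed (``confirm that the relevant shift \dots is still $\theta\mapsto\theta+\pi$'') fails: for an $m$-rosette every shift $\theta\mapsto\theta+k\pi$, $k=1,\dots,2m-1$, produces a parallel tangent line, so ``the distance between opposite tangent lines'' is ambiguous, and the constant-width notion of \cite{MM1} --- used consistently in this paper, e.g.\ in the support function (\ref{CwmsSupport}) of $\Cwms(R_m)$, in the distinguished branch $E_{0.5,m}(R_m)$, and in the criterion (\ref{RmCWcondition}) --- pairs $\theta$ with $\theta+m\pi$, half of the full period $2m\pi$. (The paper itself offers no proof; the theorem is quoted from Corollary 5.1 of \cite{MM1}.) With the correct shift the phase change of the $n$th harmonic is $\frac{n}{m}\cdot m\pi=n\pi$, so there is no $\cos$--$\sin$ mixing at all:
\[
p(\theta)+p(\theta+m\pi)=2a_0+\sum_{n=1}^{\infty}\big(1+(-1)^n\big)\left(a_n\cos\frac{n\theta}{m}+b_n\sin\frac{n\theta}{m}\right),
\]
and this is constant if and only if $a_n=b_n=0$ for all even $n$, which is exactly (\ref{SupportRosOfCW}). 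The coupled $2\times 2$ system you were bracing for never arises.

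With your shift $\pi$ the computation you outline, carried out correctly, proves a different and (for $m>1$) false statement. The $n$th harmonic of $p(\theta)+p(\theta+\pi)$ is obtained from $(a_n,b_n)$ by the matrix $I+R\big(\frac{n\pi}{m}\big)$, where $R(\alpha)$ is the rotation of $\mathbb{R}^2$ by the angle $\alpha$, and $\det\big(I+R(\alpha)\big)=2(1+\cos\alpha)$ vanishes if and only if $\alpha\equiv\pi\pmod{2\pi}$. Hence $p(\theta)+p(\theta+\pi)\equiv\mathrm{const}$ annihilates every harmonic except those with $n$ an odd multiple of $m$ (for which $I+R(\pi)=0$, so $(a_n,b_n)$ is free); since $\cos\frac{(2j+1)m\,\theta}{m}=\cos\big((2j+1)\theta\big)$, the class you would characterize consists of $2\pi$-periodic support functions, i.e.\ $m$-fold traversals of constant-width ovals --- strictly smaller than (\ref{SupportRosOfCW}) unless $m=1$. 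Your anticipated parity split (``even $n$ dies, odd $n$ survives'') already fails at $m=3$, $n=1$: the constant-width $3$-rosette with $p(\theta)=a_0+\cos\frac{\theta}{3}$, $a_0$ large, satisfies the theorem, yet $p(\theta)+p(\theta+\pi)=2a_0+2\cos\frac{\pi}{6}\cos\big(\frac{\theta}{3}+\frac{\pi}{6}\big)$ is not constant. Replacing your width function by $w(\theta)=p(\theta)+p(\theta+m\pi)$ repairs the proof and reduces it to the short decoupled computation displayed above.
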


\begin{thm}
A rosette $R_m$ is a rosette of constant width if and only if
\begin{align}\label{RmCWcondition}
L_{R_m}^2=8\pi m\widetilde{A}_{E_{0.5, m}(R_m)}+2\pi m\big(1-(-1)^m\big)\widetilde{A}_{R_m}.
\end{align}
\end{thm}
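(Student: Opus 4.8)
The plan is to characterize constant width via the Fourier coefficients using Theorem~\ref{ThmRosWCMM1}, and then show that the displayed identity~(\ref{RmCWcondition}) is equivalent to that Fourier characterization. By Theorem~\ref{ThmRosWCMM1}, $R_m$ has constant width if and only if all \emph{even}--indexed coefficients vanish, i.e. $a_n=b_n=0$ for every even $n$. So the goal is to manipulate~(\ref{RmCWcondition}) into a statement that holds precisely when $a_n^2+b_n^2=0$ for all even $n$.

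First I would compute both sides of~(\ref{RmCWcondition}) in terms of the Fourier coefficients. The right-hand side uses $\widetilde{A}_{E_{0.5, m}(R_m)}$ from~(\ref{FourierBranchWC2}) and $\widetilde{A}_{R_m}$ from~(\ref{AreaRFourier}). Plugging in,
\begin{align*}
8\pi m\,\widetilde{A}_{E_{0.5, m}(R_m)}
&=2\pi m^2\pi\big(1+(-1)^m\big)a_0^2-\pi^2\sum_{n=1}^{\infty}(n^2-m^2)\big(1+(-1)^{n+m}\big)(a_n^2+b_n^2),\\
2\pi m\big(1-(-1)^m\big)\widetilde{A}_{R_m}
&=2\pi m^2\pi\big(1-(-1)^m\big)a_0^2-\pi^2\big(1-(-1)^m\big)\sum_{n=1}^{\infty}(n^2-m^2)(a_n^2+b_n^2).
\end{align*}
Adding these, the $a_0^2$ terms combine to $4\pi^2 m^2 a_0^2=L_{R_m}^2$ by~(\ref{FourierLength}), since $(1+(-1)^m)+(1-(-1)^m)=2$. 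Thus~(\ref{RmCWcondition}) reduces to the vanishing of the remaining sum, namely
\begin{align*}
0=\sum_{n=1}^{\infty}(n^2-m^2)\Big[\big(1+(-1)^{n+m}\big)+\big(1-(-1)^m\big)\Big](a_n^2+b_n^2).
\end{align*}

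The decisive step is to analyze the bracketed coefficient $c_n:=\big(1+(-1)^{n+m}\big)+\big(1-(-1)^m\big)$ by parity of $m$. If $m$ is even, then $1-(-1)^m=0$ and $1+(-1)^{n+m}=1+(-1)^n$, which equals $2$ for even $n$ and $0$ for odd $n$. If $m$ is odd, then $1-(-1)^m=2$ and $1+(-1)^{n+m}=1-(-1)^n$, which equals $0$ for even $n$ and $2$ for odd $n$; so $c_n=2+0=2$ for even $n$ and $c_n=0+2=2$ for odd $n$, which would kill the characterization. Hence the main obstacle, and the place requiring care, is that I expect the correct reading to make $c_n$ isolate exactly the even--$n$ contributions in both parities; I would recheck the sign conventions in~(\ref{FourierBranchWC2}) (in particular whether the constant-width condition forces attention to the factor $(n^2-m^2)$ never vanishing for even $n$, since $m$ even already has $n=m$ as a boundary case). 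Assuming the bracket reduces to a positive multiple of $(a_n^2+b_n^2)$ summed over even $n$ only, each summand $(n^2-m^2)(a_n^2+b_n^2)$ has a definite sign for $n>m$, and one argues the whole sum vanishes iff every even--indexed term vanishes.

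Finally I would close the equivalence: the reduced sum vanishes if and only if $a_n=b_n=0$ for all even $n$, which by Theorem~\ref{ThmRosWCMM1} is exactly the condition that $R_m$ be a rosette of constant width. The only delicate point in this last implication is handling the sign of $(n^2-m^2)$, which is negative for $n<m$ and positive for $n>m$; so ``sum of signed terms equals zero'' does not immediately force each term to vanish. I would resolve this either by invoking the already-established inequality framework (the relevant branch areas are oriented, and the constant-width rosette extremizes the isoperimetric defect, as in Theorem~\ref{ThmIsoEqOvals}) or by a direct argument showing the combination appearing here is in fact a sum of \emph{nonnegatively} weighted squares once the correct even/odd filtering from $c_n$ is applied, so that vanishing of the total forces termwise vanishing. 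That sign-definiteness check is the step I anticipate being the crux of a rigorous proof.
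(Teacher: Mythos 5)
Your strategy---expanding both sides of (\ref{RmCWcondition}) for a \emph{general} rosette and reducing the question to the vanishing of $\sum_{n\geqslant 1}(n^2-m^2)\,c_n\,(a_n^2+b_n^2)$ with $c_n=\big(1+(-1)^{n+m}\big)+\big(1-(-1)^m\big)$---is more ambitious than the paper's proof, which is a one-directional substitution: it inserts the constant-width form (\ref{SupportRosOfCW}) into (\ref{AreaRFourier}) and Lemma \ref{LemFourierAreaBranchWC} and never argues the converse at all. Your reduction is computed correctly, and the obstruction you hit at odd $m$ is genuine, not a misreading on your part. For odd $m$ one has $c_n=4$ for odd $n$ and $c_n=2$ for even $n$ (your final assembly ``$c_n=2$ in both cases'' contradicts your own correct intermediate values, but the decisive point survives: $c_n\neq 0$ on the odd harmonics), so with (\ref{FourierBranchWC2}) as stated, constant width does \emph{not} annihilate the defect. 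The discrepancy traces to the paper's own proof: the step rewriting $-\frac{\pi}{4m}\big(1-(-1)^m\big)\sum_{n\ \mathrm{odd}}(n^2-m^2)(a_n^2+b_n^2)$ as $-\frac{1-(-1)^m}{2}\big(\widetilde{A}_{R_m}-m\pi a_0^2\big)$ has the wrong sign, since the displayed formula for $\widetilde{A}_{R_m}$ gives $\frac{\pi}{2m}\sum_{n\ \mathrm{odd}}(n^2-m^2)(a_n^2+b_n^2)=m\pi a_0^2-\widetilde{A}_{R_m}$; correcting it yields, for odd $m$, $L_{R_m}^2=4\pi m\widetilde{A}_{R_m}-8\pi m\widetilde{A}_{E_{0.5,m}(R_m)}$. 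That corrected identity is cross-confirmed inside the paper: for a constant-width rosette with $m$ odd, (\ref{CwmsSupportFourier}) gives $p_{\Cwms(R_m)}\equiv 0$, hence $\widetilde{A}_{\Cwms(R_m)}=0$, and (\ref{IsoEq1}) produces exactly the minus sign. So your hope that ``the correct reading makes $c_n$ isolate the even-$n$ contributions'' cannot be discharged in favor of the printed statement; it holds only after flipping the sign of the $\widetilde{A}_{E_{0.5,m}(R_m)}$ term for odd $m$, and for $m=1$ it then reproduces the oval case of Theorem \ref{ThmIsoEqOvals}, where $\widetilde{A}_{E_{0.5}(M)}\leqslant 0$ accounts for the absolute value.

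The second gap you flag---the indefinite sign of $(n^2-m^2)$---is equally real, and neither of your proposed rescues can close it in general. After the parity filtering (with the sign corrected for odd $m$) the defect is $2\pi^2\sum_{n\ \mathrm{even}}(n^2-m^2)(a_n^2+b_n^2)$ for both parities of $m$: the weights are negative for even $n<m$, zero at $n=m$ when $m$ is even (the translation harmonic), and positive for even $n>m$. Only for $m=1$ are all weights positive, which is why termwise vanishing---and hence the full ``if and only if''---closes for ovals. For $m\geqslant 2$ one can balance: e.g.\ $m=3$ with $(2^2-9)a_2^2+(4^2-9)a_4^2=0$, i.e.\ $a_2=\sqrt{7}\,t$, $a_4=\sqrt{5}\,t$, and $a_0$ large enough that $\rho>0$; this rosette satisfies the equality yet has nonvanishing even harmonics, so it is not of constant width by Theorem \ref{ThmRosWCMM1}. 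Hence there is no rewriting as nonnegatively weighted squares, and no extremality argument will force each term to vanish. In summary: your computation is sound and establishes the forward implication (modulo the odd-$m$ sign issue), which is all the paper's own proof establishes; the converse direction is a genuine gap both in your proposal and, as written, in the paper.
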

\begin{proof}
By (\ref{SupportRosOfCW}) and by (\ref{AreaRFourier}) we get that the oriented area of $R_m$ is in the following form.
\begin{align*}
\widetilde{A}_{R_m} &=m\pi a_0^2-\frac{\pi}{2m}\sum_{\substack{n=1, \\ n\ is\ odd}}^{\infty}(n^2-m^2)(a_n^2+b_n^2).
\end{align*}
Therefore by Lemma \ref{LemFourierAreaBranchWC} and by (\ref{FourierLength}) we get the following equalities.
\begin{align*}
2\widetilde{A}_{E_{0.5, m}(R_m)} &=\frac{1+(-1)^m}{2}m\pi a_0^2-\frac{\pi}{4m}\sum_{\substack{n=1,\\ n\ is\ odd}}^{\infty}(n^2-m^2)(1-(-1)^m)(a_n^2+b_n^2)\\
 &=\frac{1+(-1)^m}{2}m\pi a_0^2-\frac{1-(-1)^m}{2}\left(\widetilde{A}_{R_m}-m\pi a_0^2\right)\\
 &=\pi ma_0^2-\frac{1-(-1)^m}{2}\widetilde{A}_{R_m}\\
 &=\frac{L_{R_m}^2}{4\pi m}-\frac{1-(-1)^m}{2}\widetilde{A}_{R_m}.
\end{align*}
Therefore we get (\ref{RmCWcondition}).

\end{proof}

In \cite{Zw2} we introduced the following set for $1$--rosettes.

\begin{defn} 
The \textit{Constant Width Measure Set} of $R_1$ is the following set
\begin{align*}
\Cwms(R_1)=\left\{a-b+\frac{L_{R_1}}{\pi}\cdot\mathbbm{n}(a)\ \big|\ a,b\text{ is a parallel pair of } R_1\right\},
\end{align*}
where $\mathbbm{n}$ is the unit normal vector field compatible with the orientation of $R_1$. We treat $\Cwms(R_1)$ as a subset of a vector space $V=\mathbb{R}^2$. To visualize the $\Cwms$ and the other sets in the same picture let us assume that the origin of $V$ is $\Theta$.

\end{defn}

We generalize the $\Cwms$ for all rosettes in the following way.

\begin{defn}
Let $\gamma$ be parameterization of $R_m$ in terms of $\theta$ and $p(\theta)$. Then the \textit{Constant Width Measure Set} of $R_m$ is a curve defined by the following parameterization
\begin{align*}
\gamma_{\Cwms(R_m)}(\theta)=\gamma(\theta)-\gamma(\theta+m\pi)+\frac{L_{R_m}}{m\pi}\cdot\mathbbm{n}(\theta).
\end{align*}

\end{defn}

It is easy to see that $\Cwms(R_m)$ has the following support function

\begin{align}\label{CwmsSupport}
p_{\Cwms(R_m)}(\theta)=p(\theta)-(-1)^mp(\theta+m\pi)-\frac{L_{R_m}}{m\pi}
\end{align}

and in terms of coefficients of the Fourier series of $p$

\begin{align}\label{CwmsSupportFourier}
p_{\Cwms(R_m)}(\theta)=\big(-1-(-1)^m\big)a_0+\sum_{n=1}^{\infty}\big(1-(-1)^{n+m}\big)\left(a_n\cos\frac{n\theta}{m}+b_n\sin\frac{n\theta}{m}\right).
\end{align}

\begin{thm}
Let $R_m$ be a generic $m$-rosette. If $m$ is odd then $\Cwms(R_m)$ is a smooth curve with only cusp singularities and the number of cusp singularities of $\Cwms(R_m)$ is even and positive.
\end{thm}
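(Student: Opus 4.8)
The plan is to reduce the statement to counting the zeros of the radius-of-curvature function of $\Cwms(R_m)$. First I would specialise the Fourier expansion (\ref{CwmsSupportFourier}) to the case $m$ odd. Since then $(-1)^m=-1$, the constant term $\big(-1-(-1)^m\big)a_0$ vanishes, while $1-(-1)^{n+m}$ equals $2$ for $n$ even and $0$ for $n$ odd, so
\[
p_{\Cwms(R_m)}(\theta)=\sum_{\substack{n=1,\\ n\ \text{even}}}^{\infty}2\left(a_n\cos\frac{n\theta}{m}+b_n\sin\frac{n\theta}{m}\right).
\]
Thus the support function of $\Cwms(R_m)$ contains only even harmonics and has no constant term.

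Next I would compute the radius of curvature $\varrho:=p_{\Cwms(R_m)}+p_{\Cwms(R_m)}''$. Repeating the computation that produced (\ref{FourierPradius}) gives
\[
\varrho(\theta)=-\frac{2}{m^2}\sum_{\substack{n=1,\\ n\ \text{even}}}^{\infty}(n^2-m^2)\left(a_n\cos\frac{n\theta}{m}+b_n\sin\frac{n\theta}{m}\right).
\]
The two features I will exploit are that $\varrho$ still has no constant term, so $\int_0^{2m\pi}\varrho(\theta)\,d\theta=0$, and that for a generic $R_m$ the function $\varrho$ is not identically zero. Indeed $\varrho\equiv0$ would force every even-indexed coefficient to vanish (note $n^2-m^2\neq0$ always, since $n$ is even and $m$ odd), i.e. $p$ would have only odd harmonics, which by Theorem \ref{ThmRosWCMM1} means $R_m$ has constant width --- excluded by genericity.

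I would then identify the singular points of $\Cwms(R_m)$ with the zeros of $\varrho$: differentiating the support-function parametrization shows that $\gamma_{\Cwms(R_m)}'(\theta)=\varrho(\theta)\,(-\sin\theta,\cos\theta)$, so the curve is singular exactly where $\varrho=0$. At a simple zero $\theta_0$ (that is, $\varrho(\theta_0)=0$ and $\varrho'(\theta_0)\neq0$) a short computation of the second and third $\theta$-derivatives of $\gamma_{\Cwms(R_m)}$ at $\theta_0$ shows them to be linearly independent as soon as $\varrho'(\theta_0)\neq0$, which is the local normal form of an ordinary cusp. Invoking the genericity of $R_m$ to ensure that every zero of $\varrho$ is simple, I conclude that $\Cwms(R_m)$ is a smooth curve whose only singularities are ordinary cusps.

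Finally I would count. Since $\varrho$ has zero mean over $[0,2m\pi)$ and is not identically zero, it must change sign and hence has at least one zero; and because $\varrho$ is a smooth $2m\pi$-periodic function with only simple zeros, the number of its sign changes over one full period is necessarily even. Hence the number of cusps of $\Cwms(R_m)$ is even and at least two, that is, even and positive, mirroring the zero-counting argument already used in the proof of Theorem \ref{ThmRosettesOfTheWC}. The step I expect to be the main obstacle is not the counting but the cusp analysis: making rigorous that each simple zero of $\varrho$ yields an ordinary cusp and no other local singularity, and pinning down exactly which genericity hypothesis on $R_m$ forces all zeros of $\varrho$ to be simple.
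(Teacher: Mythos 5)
Your proposal is correct, and while it shares the paper's overall skeleton (identify singular points of $\Cwms(R_m)$ with zeros of its radius of curvature, then count zeros over a period), two of your three key steps run along genuinely different lines. For positivity, the paper argues directly with the singularity condition $\rho(\theta)+\rho(\theta+m\pi)=L_{R_m}/(m\pi)$: if there were no zeros, integrating the strict inequality over $[0,m\pi)$ yields the contradiction $L_{R_m}<L_{R_m}$; your zero-mean observation $\int_0^{2m\pi}\varrho\,d\theta=0$ is the same integral identity in Fourier clothing, but your route buys something extra — by expanding $p_{\Cwms(R_m)}$ in even harmonics and invoking Theorem \ref{ThmRosWCMM1}, you make explicit that $\varrho\equiv 0$ happens precisely for rosettes of constant width, a degenerate case (where $\Cwms(R_m)$ collapses to a point) that the paper leaves buried in the word ``generic.'' For evenness, however, the paper's argument is more robust than yours: it uses that $\rho(\theta)+\rho(\theta+m\pi)$ is $m\pi$--periodic, so zeros in $[0,2m\pi)$ pair off as $\theta_0\leftrightarrow\theta_0+m\pi$, giving an even count with no hypothesis on the nature of the zeros; your sign-change-parity argument needs every zero to be simple, which is exactly the genericity obstacle you flag at the end. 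You could eliminate that dependence by noting that your $\varrho$, containing only harmonics $\cos(n\theta/m),\sin(n\theta/m)$ with $n$ even, is itself $m\pi$--periodic, recovering the paper's pairing. Finally, your explicit normal-form computation ($\gamma''$ and $\gamma'''$ linearly independent at a simple zero of $\varrho$, with determinant $2\varrho'(\theta_0)^2\neq 0$) replaces the paper's appeal to Thom's theory and is a welcome concretization; the simplicity of zeros it requires is the same genericity the paper invokes, so nothing is lost.
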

\begin{proof}
$\Cwms(R_m)$ is a connected smooth curve. The set $\Cwms(R_m)$ is singular if and only if 
\begin{align}\label{SingConditionCwms}
\rho_{\Cwms(R_m)}(\theta)=0.
\end{align}
By theory of Thom (1975) \cite{T1} one can get that generically $\Cwms(\M)$ has cusp singularity when (\ref{SingConditionCwms}) holds.

If $m$ is odd the condition (\ref{SingConditionCwms}) is equivalent to
\begin{align}\label{SingConditionCwmsOdd}
\rho(\theta)+\rho(\theta+m\pi)=\frac{L_{R_m}}{m\pi}.
\end{align}

The number of cusp singularities of $\Cwms(R_m)$ is the number of zeros of (\ref{SingConditionCwmsOdd}) in the range $[0,2m\pi)$. Since the function $\rho(\theta)+\rho(\theta+m\pi)$ is $m\pi$--periodic, we get that the number of cusp singularities of $\Cwms(R_m)$ is even. Let us assume that this number is equal to zero. Then we get that for all $\theta\in[0,m\pi)$ one of two following inequalities holds.
\begin{align*}
\rho(\theta)+\rho(\theta+m\pi)<\frac{L_{R_m}}{m\pi},\\
\rho(\theta)+\rho(\theta+m\pi)>\frac{L_{R_m}}{m\pi}.
\end{align*}
If $\displaystyle\rho(\theta)+\rho(\theta+m\pi)<\frac{L_{R_m}}{m\pi}$ for all $\theta\in[0,m\pi)$, then 
\begin{align*}
\int_0^{m\pi}\left(\rho(\theta)+\rho(\theta+m\pi)\right)d\theta &<\int_0^{m\pi}\frac{L_{R_m}}{m\pi}d\theta,\\
L_{R_m}&<L_{R_m}.
\end{align*}
Since we get the same contradiction for $\displaystyle\rho(\theta)+\rho(\theta+m\pi)>\frac{L_{R_m}}{m\pi}$, the number of cusp singularities of $\Cwms(R_m)$ is positive.

\end{proof}

In Fig. \ref{FigCwmsRm} we present examples of $\Cwms(R_m)$.

\begin{figure}[h]
\centering
\includegraphics[scale=0.17]{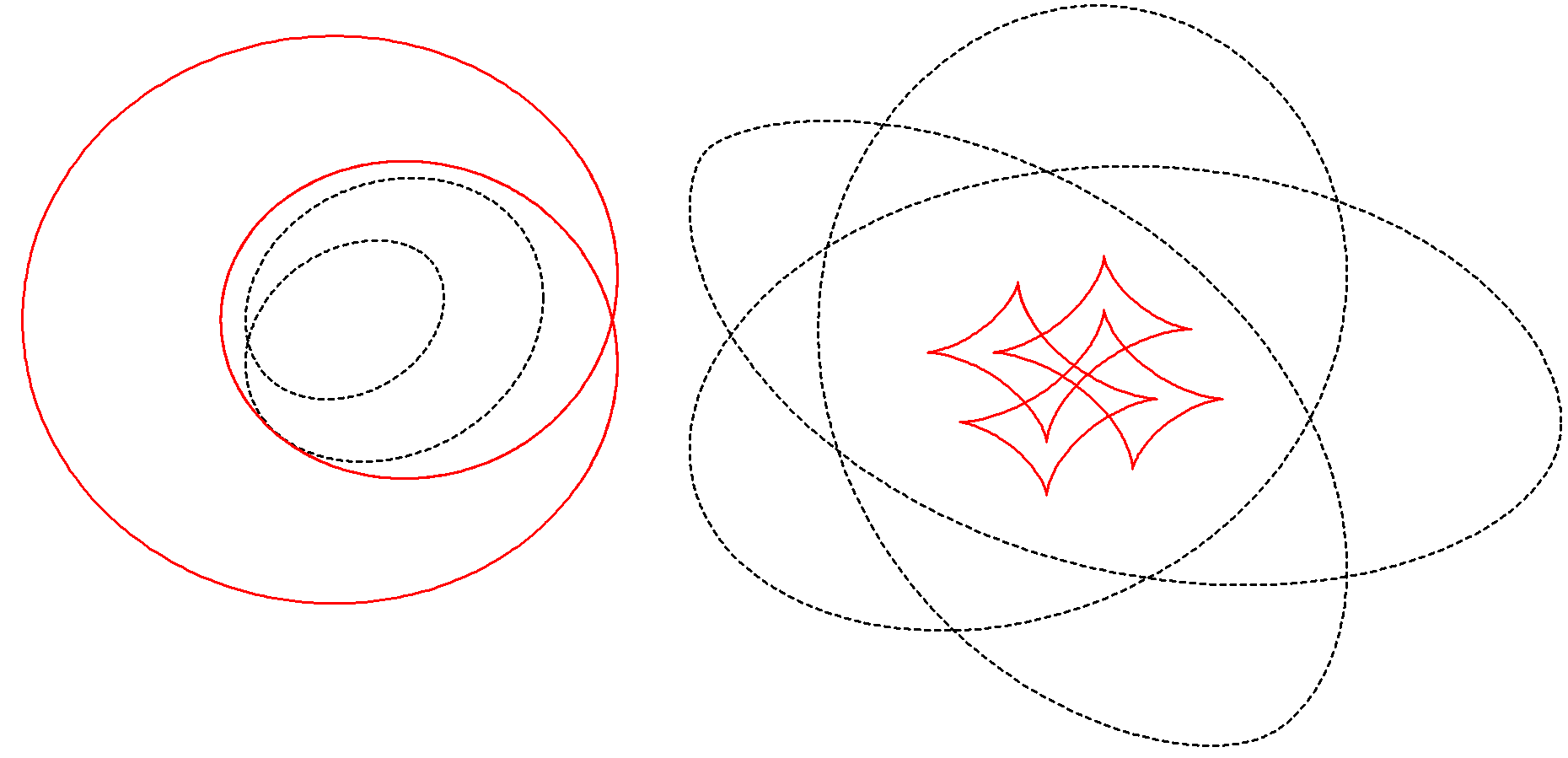}
\caption{Two rosettes $R_2$ and $R_3$ (the dashed lines) and $\Cwms(R_2), \Cwms(R_3)$. Their support functions are $p_{R_2}(\theta)=10+4\cos\frac{\theta}{2}+\sin 2\theta$ and $p_{R_3}(\theta)=12+\cos\frac{2\theta}{3}+4\cos\frac{5\theta}{3}-\sin 2\theta$, respectively.}
\label{FigCwmsRm}
\end{figure}

\begin{thm}\label{ThmIsoEq}(Isoperimetric equalities for rosettes I)
If $m$ is an odd number, then 
\begin{align}\label{IsoEq1}
L^2_{R_m} &= 4\pi m\widetilde{A}_{R_m}-8\pi m\widetilde{A}_{E_{0.5, m}(R_m)}-\pi m\widetilde{A}_{\Cwms(R_m)}.
\end{align}

If $m$ is an even number, then
\begin{align}\label{IsoEq2}
L^2_{R_m} &= -2\pi m\widetilde{A}_{R_m}+4\pi m\widetilde{A}_{E_{0.5, m}(R_m)}+\frac{\pi m}{2}\widetilde{A}_{\Cwms(R_m)}.
\end{align}
\end{thm}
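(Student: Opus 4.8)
The proof is a computation in the Fourier coefficients $\{a_n,b_n\}$ of the Minkowski support function $p$ of $R_m$, so the plan is to rewrite every quantity appearing in (\ref{IsoEq1}) and (\ref{IsoEq2}) in terms of these coefficients and to check that the prescribed linear combination collapses to $L_{R_m}^2$. By (\ref{FourierLength}) the left-hand side is simply $L_{R_m}^2=4\pi^2m^2a_0^2$, so after the substitution the entire content of the theorem becomes the assertion that every contribution carrying a factor $(n^2-m^2)(a_n^2+b_n^2)$ cancels and that the surviving constant ($a_0^2$) term equals $4\pi^2m^2a_0^2$.

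First I would assemble the three area ingredients. The oriented area $\widetilde{A}_{R_m}$ is given directly by (\ref{AreaRFourier}). The oriented area $\widetilde{A}_{E_{0.5,m}(R_m)}$ of the closing branch of the Wigner caustic is given by (\ref{FourierBranchWC2}) of Lemma \ref{LemFourierAreaBranchWC}; there the factor $1+(-1)^{n+m}$ and the double covering recorded in Remark \ref{RemDoubleCov} are already built in. The only genuinely new computation is $\widetilde{A}_{\Cwms(R_m)}$: I would substitute the support function (\ref{CwmsSupportFourier}) into the area formula (\ref{AreaRFourier}), exactly as in the proof of Lemma \ref{LemFourierAreaBranchWC}. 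Since (\ref{AreaRFourier}) is quadratic in the Fourier data, this squares the harmonic-selecting factor, turning $1-(-1)^{n+m}$ into $(1-(-1)^{n+m})^2$ (which equals $0$ or $4$) and the constant $-1-(-1)^m$ into $(-1-(-1)^m)^2$; I would record the resulting closed form as a short auxiliary lemma.

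Next I would split according to the parity of $m$, since the sign $(-1)^m$ controls which harmonics survive. For $m$ odd both $\widetilde{A}_{E_{0.5,m}(R_m)}$ and $\widetilde{A}_{\Cwms(R_m)}$ lose their constant term, and the selecting factors leave $\widetilde{A}_{E_{0.5,m}(R_m)}$ supported on odd $n$ and $\widetilde{A}_{\Cwms(R_m)}$ supported on even $n$. The combination $4\pi m\widetilde{A}_{R_m}-8\pi m\widetilde{A}_{E_{0.5,m}(R_m)}-\pi m\widetilde{A}_{\Cwms(R_m)}$ is then arranged so that the odd-$n$ part of $4\pi m\widetilde{A}_{R_m}$ is cancelled by the caustic term and the even-$n$ part by the $\Cwms$ term, using $\sum_{n\text{ odd}}+\sum_{n\text{ even}}=\sum_{n}$; only $4\pi^2m^2a_0^2$ survives, which is (\ref{IsoEq1}). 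For $m$ even the roles of the two parities are interchanged and, crucially, both $\widetilde{A}_{E_{0.5,m}(R_m)}$ and $\widetilde{A}_{\Cwms(R_m)}$ now retain a nonzero $a_0^2$ term; the coefficients in (\ref{IsoEq2}) are precisely those for which the higher harmonics still telescope away while the three constant terms recombine into $4\pi^2m^2a_0^2$.

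The main obstacle is bookkeeping rather than ideas: one must track, parity by parity, exactly which harmonics each of the three areas feeds into the sum, and verify that the fixed numerical coefficients simultaneously (i) annihilate every $(n^2-m^2)(a_n^2+b_n^2)$ term and (ii) reproduce the constant $4\pi^2m^2a_0^2=L_{R_m}^2$. The even case is the delicate one, because there both the Wigner caustic and the Constant Width Measure Set contribute to the constant term, so the balance among the three $a_0^2$ coefficients must be checked with particular care; this is where I would concentrate the verification, since it is precisely the point at which any mismatch in the numerical factors would surface.
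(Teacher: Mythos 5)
Your strategy is the same as the paper's: expand all support functions in the Fourier series (\ref{FourierP}), push them through the area formula (\ref{AreaRFourier}), and check that the prescribed linear combination collapses to $L_{R_m}^2=4\pi^2m^2a_0^2$. For odd $m$ your plan reproduces the paper's proof and works: there $\widetilde{A}_{E_{0.5,m}(R_m)}$ is supported on odd harmonics, $\widetilde{A}_{\Cwms(R_m)}$ on even harmonics, neither has an $a_0^2$ term, and the coefficients $4\pi m$, $-8\pi m$, $-\pi m$ annihilate all $(n^2-m^2)(a_n^2+b_n^2)$ contributions, giving (\ref{IsoEq1}).

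The gap is in the even case, exactly where you deferred the verification. Carrying out the computation you yourself propose --- substituting (\ref{CwmsSupportFourier}) into (\ref{AreaRFourier}), so that the constant term gets the squared factor $\big({-1}-(-1)^m\big)^2=4$ --- yields, for even $m$,
\begin{align*}
\widetilde{A}_{\Cwms(R_m)}=4\pi m a_0^2-\frac{2\pi}{m}\sum_{\substack{n=1,\\ n\ is\ odd}}^{\infty}(n^2-m^2)(a_n^2+b_n^2),
\end{align*}
whereas the paper's proof records the constant term as $8\pi m a_0^2$, and the coefficients $-2\pi m$, $4\pi m$, $\frac{\pi m}{2}$ in (\ref{IsoEq2}) are calibrated to that larger value. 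With the value your computation produces, the right-hand side of (\ref{IsoEq2}) has $(n^2-m^2)$ terms that do telescope, but the three $a_0^2$ terms contribute $(-2+2+2)\pi^2m^2a_0^2=2\pi^2m^2a_0^2=\frac{1}{2}L_{R_m}^2$, not $L_{R_m}^2$. A concrete test exposes this: for the circle of radius $r$ traversed twice ($m=2$, $p\equiv r$) one has $L_{R_2}=4\pi r$, $\widetilde{A}_{R_2}=2\pi r^2$, $\widetilde{A}_{E_{0.5,2}(R_2)}=\pi r^2$ (the branch is the circle covered once, by Remark \ref{RemDoubleCov}), and $\gamma_{\Cwms(R_2)}(\theta)=2r\,\mathbbm{n}(\theta)$, a circle of radius $2r$ traversed twice, so $\widetilde{A}_{\Cwms(R_2)}=8\pi r^2$; the right-hand side of (\ref{IsoEq2}) is then $8\pi^2r^2\neq 16\pi^2r^2=L_{R_2}^2$. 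So your closing claim that the coefficients in (\ref{IsoEq2}) make the constants recombine into $4\pi^2m^2a_0^2$ would fail when actually checked: completing your plan honestly leads instead to the corrected even-$m$ identity $L_{R_m}^2=-4\pi m\widetilde{A}_{R_m}+8\pi m\widetilde{A}_{E_{0.5,m}(R_m)}+\pi m\widetilde{A}_{\Cwms(R_m)}$ (all coefficients of (\ref{IsoEq2}) doubled), which the doubly-traversed circle confirms. In short, your method is the right one and identical in substance to the paper's, but the even-case bookkeeping you postponed is not a formality: it is precisely the point where the correct $\Cwms$ area conflicts with the constant $8\pi m a_0^2$ used in the paper's proof, and (\ref{IsoEq2}) as stated does not survive the check.
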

\begin{proof}
Let $m$ be odd. Then by (\ref{SupportBranchWC}) and (\ref{CwmsSupport}) we get that
\begin{align*}
p_{E_{0.5, m}(R_m)}(\theta) &=\sum_{\substack{n=1,\\ n\ is\ odd}}^{\infty}\left(a_n\cos\frac{n\theta}{m}+b_n\sin\frac{n\theta}{m}\right),\\
p_{\Cwms(R_m)}(\theta) &=2\sum_{\substack{n=2,\\ n\ is\ even}}^{\infty}\left(a_n\cos\frac{n\theta}{m}+b_n\sin\frac{n\theta}{m}\right).
\end{align*}

Therefore
\begin{align*}
\widetilde{A}_{E_{0.5, m}(R_m)} &=-\frac{\pi}{4m}\sum_{\substack{1=2,\\ n\ is\ odd}}^{\infty}(n^2-m^2)(a_n^2+b_n^2),\\
\widetilde{A}_{\Cwms(R_m)} & =-\frac{2\pi}{m}\sum_{\substack{n=2,\\ n\ is\ even}}^{\infty}(n^2-m^2)(a_n^2+b_n^2).
\end{align*}

Let $m$ be even. Then similarly one can get that
\begin{align*}
p_{E_{0.5, m}(R_m)}(\theta) &=a_0+\sum_{\substack{n=2,\\ n\ is\ even}}^{\infty}\left(a_n\cos\frac{n\theta}{m}+b_n\sin\frac{n\theta}{m}\right),\\
p_{\Cwms(R_m)}(\theta) &= -2a_0+2\sum_{\substack{n=1,\\ n\ is\ odd}}^{\infty}\left(a_n\cos\frac{n\theta}{m}+b_n\sin\frac{n\theta}{m}\right),\\
\widetilde{A}_{E_{0.5, m}(R_m)}&=\frac{m\pi a_0^2}{2}-\frac{\pi}{4m}\sum_{\substack{n=2,\\ n\ is\ even}}^{\infty}(n^2-m^2)(a_n^2+b_n^2),\\
\widetilde{A}_{\Cwms(R_m)}&=8\pi ma_0^2-\frac{2\pi}{m}\sum_{\substack{n=1,\\ n\ is\ odd}}^{\infty}(n^2-m^2)(a_n^2+b_n^2).
\end{align*}

Then by (\ref{AreaRFourier}) it is easy to verify that (\ref{IsoEq1}) and (\ref{IsoEq2}) hold.

\end{proof}

We recall definition of an offset (this set is also known as a \textit{parallel} set).

\begin{defn}
Let $M$ be a regular positively oriented closed curve. An \textit{$\alpha$-offset} of $\M$ is the following set
\begin{align*}
F_{\alpha}(M)=\big\{a+\alpha\cdot\mathbbm{n}(a)\ \big|\ a\in M\big\},
\end{align*}
where $\mathbbm{n}(a)$ is a continuous unit normal vector field to $M$ at $a$ compatible with the orientation of $M$.
\end{defn}

Offset curves and surfaces are well-known geometric objects in the field of mathematics and computer aided geometric design, possibly because they give a powerful tool in many applications. It is well known that offsets of curves generically admit at most cusp singularities, singularities of all offsets of a curve form an evolute and set of all points of self--intersections of offsets forms a medial axis. For details see \cite{FHK1, GW1, HL1, KGP1, PP1} and the literature therein.

In \cite{Zw2} we define the Spherical Measure Set for simply closed curves as follows.

\begin{defn}
The \textit{Spherical Measure Set} of a regular positively oriented simply closed curve $M$ is an offset at level $\displaystyle \frac{L_{M}}{2\pi}$,
\begin{align}
\Sms(M)=F_{\frac{L_{M}}{2\pi}}(M)=\left\{a+\frac{L_{M}}{2\pi}\cdot\mathbbm{n}(a)\ \Big|\ a\in M\right\},
\end{align}
where $\mathbbm{n}$ is the unit normal vector field compatible with the orientation of $M$.
\end{defn}

In this paper we generalize this notion for rosettes.

\begin{defn}
The \textit{Spherical Measure Set} of an $m$-rosette $R_m$ is an offset at level $\displaystyle \frac{L_{R_m}}{2m\pi}$,
\begin{align}
\Sms(R_m)=F_{\frac{L_{R_m}}{2m\pi}}(R_m)=\left\{a+\frac{L_{R_m}}{2m\pi}\cdot\mathbbm{n}(a)\ \Big|\ a\in R_m\right\}.
\end{align}
\end{defn}

It is easy to see that the support function of $\Sms(R_m)$ is in the following form
\begin{align}\label{SupportSms}
p_{\Sms(R_m)}(\theta)=p(\theta)-\frac{L_{R_m}}{2m\pi},
\end{align}
and has the following Fourier series
\begin{align}\label{SupportSmsFourier}
p_{\Sms(R_m)}(\theta)=\sum_{n=1}^{\infty}\left(a_n\cos\frac{n\pi}{m}+b_n\sin\frac{n\pi}{m}\right).
\end{align}

Directly by (\ref{SupportSmsFourier}) we got the following propositions.

\begin{prop}
A rosette $R_m$ is a circle if and only if $\Sms(R_m)=\{\Theta\}$.
\end{prop}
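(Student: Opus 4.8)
The plan is to read the statement directly off the Fourier expansion of the support function of the Spherical Measure Set, equation~(\ref{SupportSmsFourier}). The proposition claims that $R_m$ is a circle if and only if $\Sms(R_m)$ degenerates to the single point $\Theta$ (the origin of the frame), so I would characterize each side in terms of the Fourier coefficients $a_n, b_n$ and show the two characterizations coincide.

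First I would note that by~(\ref{SupportSmsFourier}) the support function $p_{\Sms(R_m)}$ has \emph{no constant term}: all frequencies $n\geqslant 1$ are present but $a_0$ has been subtracted off by the offset at level $\tfrac{L_{R_m}}{2m\pi}$. A point in the plane, regarded as a degenerate convex set, has a support function of the form $c_1\cos\theta + c_2\sin\theta$ (this is exactly the support function of the point $(c_1,c_2)$ relative to the origin), and the single point $\Theta$ — the origin itself — has the identically zero support function. Hence $\Sms(R_m)=\{\Theta\}$ is equivalent to $p_{\Sms(R_m)}(\theta)\equiv 0$, which by orthogonality of the trigonometric system forces $a_n=b_n=0$ for all $n\geqslant 1$.

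Next I would identify the circle condition. If $a_n=b_n=0$ for every $n\geqslant 1$, then by~(\ref{FourierP}) the support function of $R_m$ is the constant $p(\theta)=a_0$, and the parameterization~(\ref{OvalParameterization}) (extended to rosettes) gives $\gamma(\theta)=\big(a_0\cos\theta, a_0\sin\theta\big)$, which traces the circle of radius $a_0$ centered at $\Theta$; conversely a circle has constant support function and hence vanishing higher Fourier coefficients. This closes the equivalence: $R_m$ is a circle $\iff$ $a_n=b_n=0$ for all $n\geqslant 1$ $\iff$ $p_{\Sms(R_m)}\equiv 0$ $\iff$ $\Sms(R_m)=\{\Theta\}$.

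The only genuine subtlety I anticipate is the interpretation of $\Sms(R_m)=\{\Theta\}$ as a set equality for a curve that has collapsed to a point: I would make explicit that a support function of the special form $c_1\cos\theta+c_2\sin\theta$ corresponds to the point $(c_1,c_2)$, so that the vanishing constant term in~(\ref{SupportSmsFourier}) is automatic and the remaining content is genuinely the vanishing of all the $a_n,b_n$. Everything else is a routine appeal to the uniqueness of Fourier coefficients, so this should be a short proof rather than a computational one.
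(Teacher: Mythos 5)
Your proposal is correct and is essentially the paper's own argument made explicit: the paper gives no proof beyond the phrase ``directly by (\ref{SupportSmsFourier})'', i.e.\ it reads the equivalence off the vanishing of the Fourier coefficients $a_n, b_n$ for $n\geqslant 1$ exactly as you do, and your explicit identification of $c_1\cos\theta+c_2\sin\theta$ as the support function of the point $(c_1,c_2)$ is a useful clarification that the paper leaves tacit. One caveat, inherited from the proposition's own statement rather than introduced by you: the step ``a circle has constant support function'' holds only for circles centered at $\Theta$, since an $m$--fold circle centered at $c=(c_1,c_2)\neq\Theta$ has $p(\theta)=a_0+c_1\cos\theta+c_2\sin\theta$ (so $a_m=c_1$, $b_m=c_2$ in the expansion (\ref{FourierP})) and then $\Sms(R_m)=\{c\}\neq\{\Theta\}$ --- so the ``only if'' direction, both in your proof and in the paper, implicitly assumes the circle is centered at the fixed origin $\Theta$, and your own point--support--function observation is precisely what makes this visible.
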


\begin{prop}
If $m$ is odd and $R_m$ is a rosette of constant width, then $\Sms(R_m)=E_{0.5, m}(R_m)$.
\end{prop}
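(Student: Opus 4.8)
The plan is to compare the two support functions directly via their Fourier expansions, since both $\Sms(R_m)$ and $E_{0.5, m}(R_m)$ are curves already equipped with explicit support functions in the excerpt. First I would recall from (\ref{SupportSmsFourier}) that the support function of $\Sms(R_m)$ is obtained from $p(\theta)$ by deleting the constant term $a_0$, so that
\begin{align*}
p_{\Sms(R_m)}(\theta)=\sum_{n=1}^{\infty}\left(a_n\cos\frac{n\theta}{m}+b_n\sin\frac{n\theta}{m}\right).
\end{align*}
Next I would invoke the hypothesis that $m$ is odd together with Theorem \ref{ThmRosWCMM1}: a rosette of constant width with rotation number $m$ has a support function whose Fourier series only contains terms with odd index $n$, i.e. $a_n=b_n=0$ whenever $n$ is even. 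Substituting this into the display above shows that $p_{\Sms(R_m)}(\theta)$ is a sum over odd $n$ only.

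The key step is then to produce the support function of $E_{0.5, m}(R_m)$ under the same hypotheses and observe it coincides. I would take $\lambda=\tfrac12$ and $k=m$ in (\ref{SupportBranchWC}) to get
\begin{align*}
p_{0.5, m}(\theta)=\frac{1}{2}p(\theta)+\frac{(-1)^m}{2}p(\theta+m\pi).
\end{align*}
Since $m$ is odd, $(-1)^m=-1$, so this is $\tfrac12\big(p(\theta)-p(\theta+m\pi)\big)$. Expanding both $p(\theta)$ and $p(\theta+m\pi)$ via (\ref{FourierP}) and (\ref{FourierPkpi}) with $k=m$, the $n$-th harmonic of $p(\theta+m\pi)$ is shifted by the phase $n\pi$, contributing a factor $\cos(n\pi)=(-1)^n$ to the cosine and sine coefficients. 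Therefore the $n$-th harmonic of $p(\theta)-p(\theta+m\pi)$ carries the factor $1-(-1)^n$, which vanishes for even $n$ and equals $2$ for odd $n$. After the factor $\tfrac12$ this leaves exactly the odd-index harmonics of $p$ with coefficient $1$, and the constant term $a_0\big(1-(-1)^0\big)/2=0$ drops out as well. Hence
\begin{align*}
p_{0.5, m}(\theta)=\sum_{\substack{n=1,\\ n\ is\ odd}}^{\infty}\left(a_n\cos\frac{n\theta}{m}+b_n\sin\frac{n\theta}{m}\right),
\end{align*}
which is identical to $p_{\Sms(R_m)}(\theta)$ once the constant-width condition has eliminated the even harmonics of $p$.

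Finally I would close the argument by noting that two curves parametrized in polar tangential coordinates with the same support function are the same curve, so $\Sms(R_m)=E_{0.5, m}(R_m)$. The main obstacle I anticipate is purely bookkeeping rather than conceptual: one must keep straight that the even harmonics are killed in $p_{\Sms(R_m)}$ by the constant-width hypothesis (Theorem \ref{ThmRosWCMM1}), whereas in $p_{0.5, m}$ they are killed by the structural factor $1-(-1)^n$ arising from the $m$-odd shift, so that only after both reductions do the two expressions manifestly agree. One should also take a moment to confirm that the underlying parametrizations (and not merely the support functions as abstract $2m\pi$-periodic functions) describe the same point set, which is immediate from (\ref{OvalParameterization}) applied to a common support function.
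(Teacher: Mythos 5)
Your proof is correct and follows exactly the route the paper intends: the paper derives the proposition ``directly'' from the Fourier series (\ref{SupportSmsFourier}) of $p_{\Sms(R_m)}$, combined with Theorem \ref{ThmRosWCMM1} and the odd-$m$ computation $p_{E_{0.5,m}(R_m)}(\theta)=\sum_{n\ \mathrm{odd}}\left(a_n\cos\frac{n\theta}{m}+b_n\sin\frac{n\theta}{m}\right)$, which appears verbatim in the proof of Theorem \ref{ThmIsoEq}. Your version merely spells out the bookkeeping (the factor $1-(-1)^n$ from the $m\pi$ shift versus the vanishing of even harmonics from constant width) that the paper leaves implicit.
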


\begin{thm}
Let $R_m$ be a generic rosette. Then the number of cusp singularities of $\Sms(R_m)$ is even and not smaller than $2$.
\end{thm}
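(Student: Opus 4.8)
The plan is to reduce the statement to a sign-change count for a single periodic function, exactly as in the proof of the corresponding theorem for $\Cwms(R_m)$. Since $\Sms(R_m)$ is an offset of $R_m$, it is a smooth curve whose only singularities are generically cusps, and by the theory of Thom these cusps occur precisely where the radius of curvature of $\Sms(R_m)$ vanishes, i.e. where $\rho_{\Sms(R_m)}(\theta)=0$. Because the support function (\ref{SupportSms}) of $\Sms(R_m)$ differs from $p$ only by the additive constant $-\frac{L_{R_m}}{2m\pi}$, which does not affect the second derivative, I would first record that
\begin{align*}
\rho_{\Sms(R_m)}(\theta)=p_{\Sms(R_m)}(\theta)+p''_{\Sms(R_m)}(\theta)=\rho(\theta)-\frac{L_{R_m}}{2m\pi}.
\end{align*}
Thus the number of cusps of $\Sms(R_m)$ equals the number of zeros in $[0,2m\pi)$ of the continuous $2m\pi$-periodic function $g(\theta)=\rho(\theta)-\frac{L_{R_m}}{2m\pi}$.

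Next, I would compute the mean of $g$. By Cauchy's formula (\ref{FourierLength}) one has $\int_0^{2m\pi}\rho(\theta)\,d\theta=L_{R_m}$, so
\begin{align*}
\int_0^{2m\pi}g(\theta)\,d\theta=L_{R_m}-\frac{L_{R_m}}{2m\pi}\cdot 2m\pi=0.
\end{align*}
Equivalently, by (\ref{FourierPradius}) together with $a_0=\frac{L_{R_m}}{2m\pi}$, the function $g=\rho_{\Sms(R_m)}$ has vanishing constant Fourier term.

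To obtain the lower bound I would argue by contradiction as in the $\Cwms$ case: if $g$ had no zero in $[0,2m\pi)$ it would keep a constant sign, and integrating the strict inequality $g(\theta)>0$ (respectively $g(\theta)<0$) over $[0,2m\pi)$ would contradict $\int_0^{2m\pi}g\,d\theta=0$; hence $g$ has at least one zero. For the parity, genericity of $R_m$ guarantees that $\rho$ is non-constant and that all zeros of $g$ are simple, so each zero is a transversal sign change of $g$; since $g$ is periodic on the circle $[0,2m\pi)$, the total number of sign changes, and therefore the number of cusps, is even. Combining evenness with the existence of at least one zero gives at least $2$ cusps.

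The routine computations are immediate; the only point requiring care is the appeal to genericity, which I would use twice: once to ensure (via Thom's classification) that the singularities of the offset are exactly cusps located at the zeros of $\rho_{\Sms(R_m)}$, and once to ensure these zeros are nondegenerate so that the sign-change and parity argument applies. I expect this genericity bookkeeping, rather than any estimate, to be the main obstacle.
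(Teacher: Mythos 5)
Your proposal is correct and takes essentially the same approach as the paper: both reduce the cusp count to the zeros of $f(\theta)=\rho(\theta)-\frac{L_{R_m}}{2m\pi}$ on $[0,2m\pi)$, use Cauchy's formula to show $\int_0^{2m\pi}f(\theta)\,d\theta=0$ and hence the existence of a sign change, and deduce evenness from periodicity. Your explicit use of genericity to ensure all zeros are simple sign changes is in fact slightly more careful than the paper's bare remark that $f(0)=f(2m\pi)$ forces an even count.
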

\begin{proof}
A curve $\Sms(R_m)$ is singular if and only if
\begin{align}
\rho_{\Sms(R_m)}(\theta)=0,
\end{align}
which is equivalent to
\begin{align}
\rho(\theta)-\frac{L_{R_m}}{2m\pi}=0.
\end{align}
The number of cusp singularities of $\Sms(R_m)$ is equal to the number of zeros of $\displaystyle f(\theta)=\rho(\theta)-\frac{L_{R_m}}{2m\pi}$ in the range $[0,2m\pi)$. Since $f(0)=f(2m\pi)$, the number of zeros is even. Since $\displaystyle\int_0^{2m\pi}f(\theta)d\theta=L_{R_m}-L_{R_m}=0$, there are at least two zeros of $f(\theta)$ in the range $[0,2m\pi)$.

\end{proof}

In Fig. \ref{FigSmsEx} we present examples of the $\Sms$ of a $2$--rosette and a $3$--rosette.

\begin{figure}[h]
\centering
\includegraphics[scale=0.3]{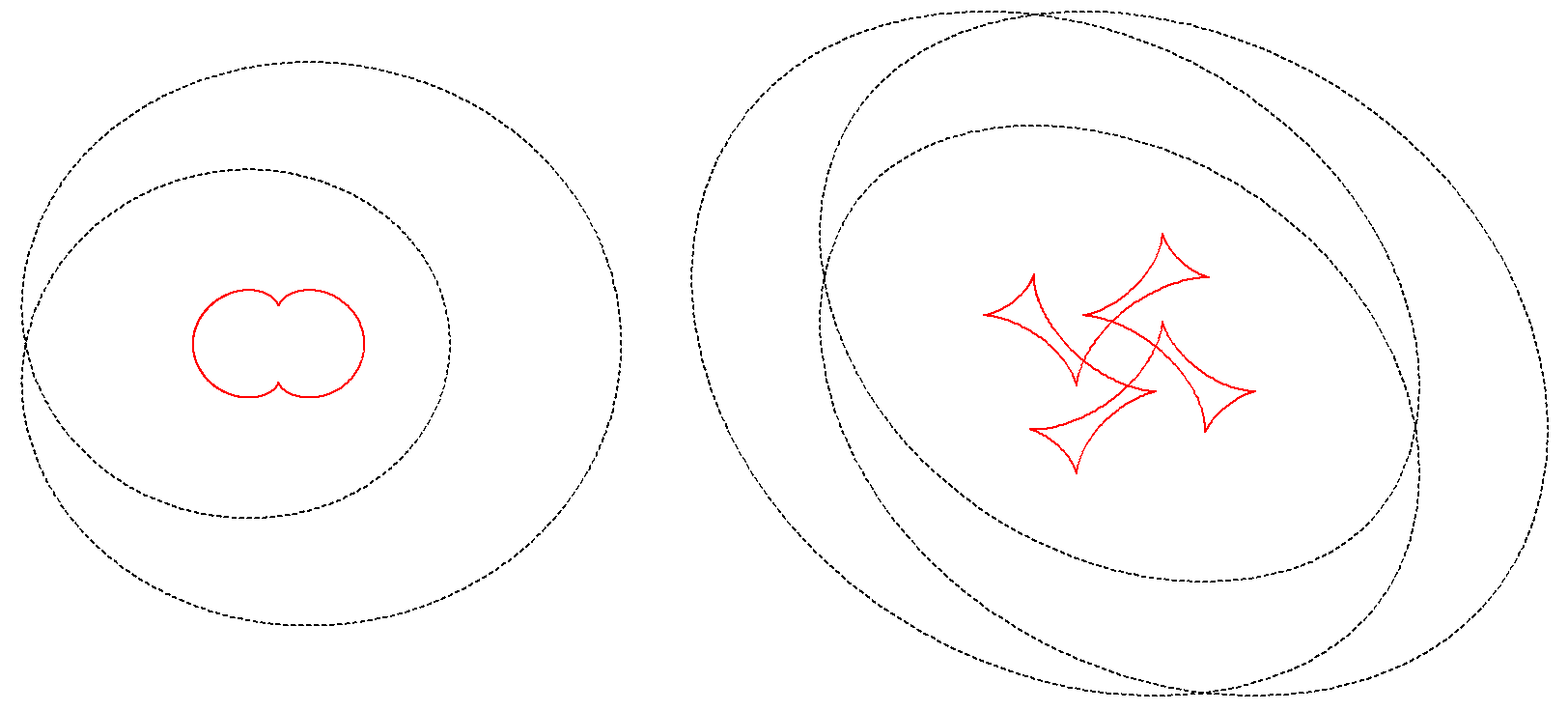}
\caption{Rosettes $R_2$ and $R_3$ (the dashed lines) and $\Sms(R_2), \Sms(R_3)$. Support functions of $R_2$ and $R_3$ are equal to $p_{R_2}(\theta)=3+\cos\frac{\theta}{2}$ and $p_{R_3}(\theta)=4+\cos\frac{2\theta}{3}-\frac{1}{2}\sin 2\theta$, respectively.}
\label{FigSmsEx}
\end{figure}

\begin{thm}\label{ThmIsoEq2}(Isoperimetric equality for rosettes II)
If $m$ is odd and $R_m$ is an $m$-rosette which is not of constant width, then
\begin{align}\label{IsoEq3}
L^2_{R_m} &= 4\pi m\widetilde{A}_{R_m}+4\pi m\widetilde{A}_{\Sms(R_m)}.
\end{align}
\end{thm}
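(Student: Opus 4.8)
The plan is to prove the identity entirely on the Fourier side, in the same spirit as the proofs of Lemma~\ref{LemFourierAreaBranchWC} and Theorem~\ref{ThmIsoEq}. I would express each of the three quantities $L_{R_m}$, $\widetilde{A}_{R_m}$ and $\widetilde{A}_{\Sms(R_m)}$ through the coefficients $a_0, a_n, b_n$ of the Minkowski support function $p$ of $R_m$, after which (\ref{IsoEq3}) becomes a comparison of two power series sharing a common tail.

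First I would record the two ingredients already available. By (\ref{FourierLength}) we have $L_{R_m}=2\pi m a_0$, so that $L_{R_m}^2=4\pi^2 m^2 a_0^2$, and by (\ref{AreaRFourier})
\begin{align*}
\widetilde{A}_{R_m}=m\pi a_0^2-\frac{\pi}{2m}\sum_{n=1}^{\infty}(n^2-m^2)(a_n^2+b_n^2).
\end{align*}
The key observation is that the first summand is exactly $m\pi a_0^2=\dfrac{L_{R_m}^2}{4\pi m}$, so the whole deviation of $R_m$ from the isoperimetric equality of a circle is carried by the series $\displaystyle\sum_{n=1}^{\infty}(n^2-m^2)(a_n^2+b_n^2)$.

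The main step is to recognize the very same series in the oriented area of the Spherical Measure Set. By (\ref{SupportSms}) the support function of $\Sms(R_m)$ is obtained from $p$ by subtracting the constant $\dfrac{L_{R_m}}{2m\pi}=a_0$; hence, as recorded in (\ref{SupportSmsFourier}), $p_{\Sms(R_m)}$ has the same coefficients $a_n, b_n$ as $p$ for $n\geq 1$ but vanishing zeroth coefficient. Since the area formula (\ref{AreaRFourier}) is valid for any curve presented through its support function — the integral $\frac12\int_0^{2m\pi}(p^2-p'^2)\,d\theta$ computing the oriented (winding–number weighted) area even for the generically cuspidal $\Sms(R_m)$ — applying it to $p_{\Sms(R_m)}$ gives
\begin{align*}
\widetilde{A}_{\Sms(R_m)}=-\frac{\pi}{2m}\sum_{n=1}^{\infty}(n^2-m^2)(a_n^2+b_n^2).
\end{align*}
Thus $\widetilde{A}_{R_m}$ and $\widetilde{A}_{\Sms(R_m)}$ contain the identical series and differ only by the term $m\pi a_0^2$; the series cancels in the appropriate linear combination, leaving $m\pi a_0^2=\dfrac{L_{R_m}^2}{4\pi m}$, which after multiplication by $4\pi m$ is precisely (\ref{IsoEq3}).

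The computation is routine; the genuine care is needed in two places. The oddness of $m$ is what makes ``constant width'' a meaningful hypothesis at all, since for even $m$ the antipodal direction $\theta+m\pi$ coincides with $\theta$ modulo $2\pi$ and the width degenerates; this is the natural setting to pair with Theorem~\ref{ThmRosWCMM1}. The exclusion of constant width is what prevents $\Sms(R_m)$ from collapsing onto the last Wigner caustic branch $E_{0.5,m}(R_m)$: by (\ref{SupportRosOfCW}) a constant–width $R_m$ has $p_{\Sms(R_m)}=p_{E_{0.5,m}(R_m)}$, so in that case (\ref{IsoEq3}) would merely restate the constant–width equality rather than give independent information. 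The one step I would argue most carefully is the justification that (\ref{AreaRFourier}) still computes the oriented area of $\Sms(R_m)$ once cusps appear, which is exactly the subtle point — everything else is elementary bookkeeping of Fourier coefficients.
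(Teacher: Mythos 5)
Your proposal is correct and follows essentially the same route as the paper's own proof, which likewise expands $p_{\Sms(R_m)}$ by (\ref{SupportSmsFourier}), obtains $\widetilde{A}_{\Sms(R_m)}=-\frac{\pi}{2m}\sum_{n=1}^{\infty}(n^2-m^2)(a_n^2+b_n^2)$ as in (\ref{FourierAreaSms}), and combines this with (\ref{FourierLength}) and (\ref{AreaRFourier}); your additional care about applying the area formula to the cuspidal $\Sms(R_m)$, and your reading of the hypotheses (odd $m$, non--constant width as ruling out the collapse onto the doubly covered branch $E_{0.5,m}(R_m)$), go beyond the paper's two--line argument but are consistent with it. Note only that this common computation actually yields $L_{R_m}^2=4\pi m\widetilde{A}_{R_m}-4\pi m\widetilde{A}_{\Sms(R_m)}$, since both areas carry the series $-\frac{\pi}{2m}\sum_{n=1}^{\infty}(n^2-m^2)(a_n^2+b_n^2)$ with the \emph{same} sign and it cancels in the difference, not the sum; so your phrase ``the appropriate linear combination'' glosses over the same sign discrepancy with the printed (\ref{IsoEq3}) that the paper's ``easy consequence'' does --- in the oval case of Theorem \ref{ThmIsoEqOvals} this is reconciled by the absolute value, because there $\widetilde{A}_{\Sms(M)}\leqslant 0$.
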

\begin{proof}
By (\ref{SupportSmsFourier}) the oriented area of the Spherical Measure Set of $R_m$ is in the following form
\begin{align}\label{FourierAreaSms}
\widetilde{A}_{\Sms(R_m)}&=\frac{1}{2}\int_0^{2m\pi}\Big(p_{\Sms(R_m)}^2(\theta)-p_{\Sms(R_m)}'^2(\theta)\Big)d\theta\\
\nonumber	&=-\frac{\pi}{2m}\sum_{n=1}^{\infty}(n^2-m^2)(a_n^2+b_n^2).
\end{align}
Then (\ref{IsoEq3}) is an easy consequence of (\ref{FourierLength}), (\ref{AreaRFourier}) and (\ref{FourierAreaSms}).

\end{proof}


\section{Affine $\lambda$--equidistants of the union of rosettes}\label{SecWCUnion}

We recall the definitions and theorems from \cite{DZ1} and then prove Theorem \ref{ThmGeometryEq2Rosettes} and Theorem \ref{ThmLenEq2Rosettes}. We also simplify definitions and theorems from \cite{DZ1} to match only rosettes.

\begin{defn}
Let $R_m$ be a generic $m$--rosette and let $f:S^1\to\mathbb{R}^2$ be a parameterization of $R_m$ in terms of $p(\theta),\theta$. Then the \textit{sequence of parallel points} $\mathcal{S}$ is the following sequence $(s_0, s_1, \ldots, s_{2m-1})$, where $s_k=k\pi$.
\end{defn}

\begin{defn}
The \textit{set of parallel arcs} for an $m$--rosette is the following set
\begin{align*}
\Phi_0=\big\{(0,1), (1,2), \ldots, (2m-2, 2m-1), (2m-1, 0)\big\}.
\end{align*}
\end{defn}

\begin{prop}(\cite{DZ1})
Let $R_m$ be a generic $m$--rosette and let $f:S^1\to\mathbb{R}^2$ be a parameterization of $R_m$ in terms of $p(\theta),\theta$. For every two tuples $(k_1,l_1), (k_2,l_2)$ in $\Phi_0$ the well defined map
\begin{align*}
f\Big((s_{k_1}, s_{l_1})\Big)\ni p\mapsto P(p)\in f\Big((s_{k_2}, s_{l_2})\Big),
\end{align*}
such that $p,P(p)$ is a parallel pair of $R_m$, is a diffeomorphism.
\end{prop}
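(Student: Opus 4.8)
The plan is to reduce the claim to a single clean statement about a well-defined map between arcs and then verify that this map is a diffeomorphism using the polar tangential coordinate $\theta$. First I would recall that since $R_m$ has non-vanishing curvature, the parameterization $\gamma$ in terms of $\theta$ is everywhere regular and the Gauss map $\theta\mapsto\mathbbm{n}(\theta)$ has nonzero derivative; in particular, on each arc $f\big((s_{k},s_{l})\big)$ the tangent direction (equivalently the normal direction, equivalently the angle $\theta \bmod 2\pi$) varies strictly monotonically. The key observation is that for a generic $m$-rosette the sequence of parallel points is exactly $s_k=k\pi$, so two arcs of $\Phi_0$ cover the \emph{same} range of tangent directions: as $\theta$ runs over an open interval of length $\pi$ along one arc, it runs over the adjacent length-$\pi$ interval along the parallel arc, and the tangent lines at the two corresponding points are parallel by construction.

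The main step is to describe the map $P$ explicitly. Given $(k_1,l_1),(k_2,l_2)\in\Phi_0$, a point $p=\gamma(\theta)$ with $\theta\in(s_{k_1},s_{l_1})$ is sent to the unique point $P(p)=\gamma(\theta')$ on the other arc with $\theta'\in(s_{k_2},s_{l_2})$ having a parallel tangent line, i.e. $\theta'\equiv\theta\pmod{\pi}$. Since each arc is an interval of length $\pi$ in the $\theta$-coordinate, the congruence $\theta'\equiv\theta\pmod\pi$ determines $\theta'$ uniquely inside $(s_{k_2},s_{l_2})$, so the map is well defined and is simply the affine shift
\begin{align*}
P(\gamma(\theta))=\gamma\big(\theta+(s_{k_2}-s_{k_1})\big),
\end{align*}
where $s_{k_2}-s_{k_1}$ is an integer multiple of $\pi$. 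I would check this formula respects the parallel-pair condition: by Definition \ref{parallelpair} a parallel pair requires parallel tangent lines, and shifting $\theta$ by a multiple of $\pi$ rotates the tangent direction by a multiple of $\pi$, hence keeps the tangent line parallel; moreover $a\neq b$ holds because the two points lie on genuinely distinct arcs.

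Finally I would establish the diffeomorphism property. In the $\theta$-coordinate the map is the translation $\theta\mapsto\theta+(s_{k_2}-s_{k_1})$, which is a smooth bijection with smooth inverse between the two open intervals; composing with the smooth regular parameterizations $\gamma$ on either side (regular because $\rho(\theta)=p(\theta)+p''(\theta)>0$ by the non-vanishing curvature assumption, cf. (\ref{RadiusCurvOval}) and (\ref{FourierPradius})) shows $P$ is a diffeomorphism of the arcs. The main obstacle, and the point needing genericity, is ensuring that the tangent direction is strictly monotone along each arc so that ``the unique point with parallel tangent line'' is truly unique on $f\big((s_{k_2},s_{l_2})\big)$; this is exactly where the rotation number being $m$ together with non-vanishing curvature guarantees that $\theta$ is a global coordinate covering each direction with the correct multiplicity, so that the parallel partner on a prescribed arc is singled out without ambiguity.
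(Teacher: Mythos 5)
The paper does not actually prove this proposition; it is quoted verbatim from \cite{DZ1} and used as a black box, so there is no in-paper argument to compare against. Your proof is correct and is the natural one: since $\gamma'(\theta)=\rho(\theta)(-\sin\theta,\cos\theta)$ with $\rho(\theta)=p(\theta)+p''(\theta)>0$, two points form a parallel pair exactly when their $\theta$-parameters agree modulo $\pi$, each tuple of $\Phi_0$ corresponds to an open $\theta$-interval of length exactly $\pi$, and hence $P$ is the shift $\theta\mapsto\theta+(k_2-k_1)\pi$, a smooth bijection with smooth inverse.

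Two small points are worth tightening. First, for $P$ to be a well-defined map between the \emph{point sets} $f\big((s_{k_1},s_{l_1})\big)$ and $f\big((s_{k_2},s_{l_2})\big)$ you also need $f$ to be injective on each open arc; this is true but deserves a line: for $\theta$ in an interval of length $\pi$ the tangent directions lie in an open half-plane, so there is a fixed vector $w$ with $\frac{d}{d\theta}\langle\gamma(\theta),w\rangle=\rho(\theta)\langle(-\sin\theta,\cos\theta),w\rangle>0$, and the projection onto $w$ is strictly monotone along the arc. Second, you slightly misplace the role of genericity: uniqueness of the parallel partner on the target arc already follows from non-vanishing curvature (strict monotonicity of $\theta$), not from genericity; what genericity actually buys here is the exclusion of degenerate coincidences such as self-intersections of $R_m$ with parallel tangents, which would make $p$ and $P(p)$ coincide as points of the plane and violate the $a\neq b$ requirement in Definition \ref{parallelpair} (the same caveat shows why the statement should be read for the pairing between arcs, with the case $(k_1,l_1)=(k_2,l_2)$ giving the identity rather than a parallel pair). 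Neither point affects the substance of your argument.
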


\begin{rem}
The set of parallel arcs includes tuples of integers that are related to arcs of the $m$--rosette from which we can construct arcs of branches of an affine equidistants, see Fig. \ref{FigArcsEqParallelArcs}.
\end{rem}

\begin{figure}[h]
\centering
\includegraphics[scale=0.3]{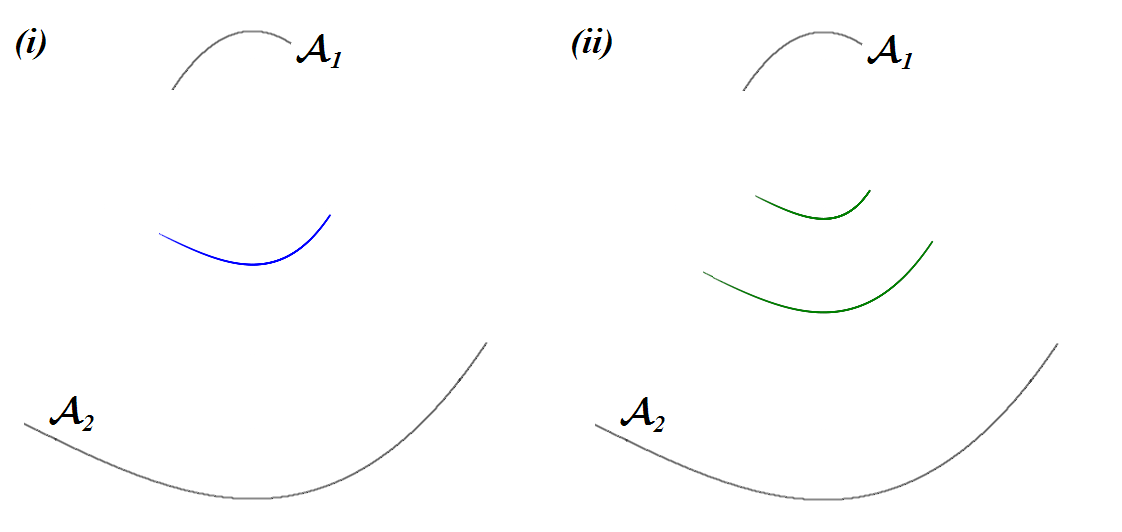}
\caption{Two arcs $\mathcal{A}_1, \mathcal{A}_2$ which are related to the tuples in the set of parallel arcs and \textit{(i)} $E_{\frac{1}{2}}(\mathcal{A}_1\cup\mathcal{A}_2)$, \textit(ii) $E_{\frac{2}{5}}(\mathcal{A}_1\cup\mathcal{A}_2)$.}
\label{FigArcsEqParallelArcs}
\end{figure}

\begin{defn}
Let $(k_1,k_2), (l_1,l_2)$ belong to the set of parallel arcs, then \linebreak $\begin{array}{ccc} \hline k_1 &  & k_2 \\ \hline l_1 &  & l_2 \\ \hline \end{array}$ denotes the following set (an \textit{arc})
\begin{align*}
\Big\{\lambda a+(1-\lambda)b \ \Big|\ a\in f\big( [s_{k_1}, s_{k_2}]\big), b\in f\big([s_{l_1}, s_{l_2}]\big), a,b\text{ is a parallel pair of }R_m\Big\}
\end{align*}
for an affine $\lambda$--equidistant for a fixed value of $\lambda$.

In addition $\begin{array}{ccccc} \hline k_1 & k_2 &\ldots& k_{n-1} & k_n \\ \hline l_1& l_2 &\ldots& l_{n-1} &l_n\\ \hline\end{array}$ denotes $\displaystyle\bigcup_{i=1}^{n-1}\begin{array}{ccc} \hline k_i & &k_{i+1}\\ \hline l_i & &l_{i+1} \\ \hline \end{array}$. We will call this set a \textit{glueing scheme} for $E_{\lambda}(R_m)$ for a fixed value of $\lambda$.
\end{defn}

\begin{defn}
A \textit{maximal glueing scheme} for $E_{\lambda}(R_m)$ for $\lambda\neq 0, 1$ is a glueing scheme which is a maximal element of the set of all glueing schemes for $E_{\lambda}(R_m)$ equipped with the inclusion relation.
\end{defn}

\begin{rem} In \cite{DZ1} we prove among other things that each maximal glueing scheme corresponds to a branch of an affine equidistant. If $\displaystyle\lambda\neq 0,\frac{1}{2}, 1$, then $E_{\lambda}(R_m)$ is the union of $\displaystyle 2{|\Phi_0|\choose 2}$ different arcs. If $\displaystyle\lambda=\frac{1}{2}$, then $E_{\frac{1}{2}}(R_m)$ is the union of $\displaystyle{|\Phi_0|\choose 2}$ different arcs. Furthermore if $\displaystyle\lambda\neq 0, \frac{1}{2}, 1$, then the maximal glueing scheme of a branch of $E_{\lambda}(R_m)$ is in the form $\begin{array}{ccccc} \hline k_1 & k_2 &\ldots& k_{n-1} &k_n \\ \hline l_1& l_2 &\ldots& l_{n-1} & l_n\\ \hline\end{array}$, where $(k_1,l_1)=(k_n,l_n)$ and if $\displaystyle\lambda=\frac{1}{2}$, then the maximal glueing scheme of a branch of $E_{\lambda}(R_m)$ is in the form $\begin{array}{ccccc} \hline k_1 & k_2 &\ldots& k_{n-1} & k_n \\ \hline l_1& l_2 &\ldots& l_{n-1} & l_n\\ \hline\end{array}$, where $(k_1,l_1)=(k_n,l_n)$ or $(k_1,l_1)=(l_n,k_n)$.
\end{rem}

\begin{defn}
Let $M$ be a smooth regular curve. Let $a,b$ be parallel pair of $M$ and let us assume that the curvature at $a$ and $b$ does not vanish. We say that $M$ is \textit{curved in the same side} at $a$ and $b$ if the center of curvature
of $M$ at $a$ and the center of curvature of $\tau_{a-b}(M)$ at $a=\tau_{a-b}(b)$ are lying on the
same side of the tangent line to $M$ at $a$.
\end{defn}

\begin{prop}\label{PropRegularPointsOfEq}(\cite{DZ1})
Let $M$ be a smooth regular curve.
\begin{enumerate}[(i)]
\item If $M$ is curved in the same side side at a parallel pair $a,b$, then $\lambda a+(1-\lambda)b$ is a $C^{\infty}$ regular point of $E_{\lambda}(M)$ for $\lambda\in(0,1)$.
\item If $M$ is not curved in the same side at a parallel pair $a,b$, then $\lambda a+(1-\lambda)b$ is a $C^{\infty}$ regular point of $E_{\lambda}(M)$ for $\lambda\in(-\infty,0)\cup(1,\infty)$.
\end{enumerate}
\end{prop}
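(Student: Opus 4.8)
The statement to prove is Proposition~\ref{PropRegularPointsOfEq}: for a smooth regular curve $M$ with a parallel pair $a,b$ of non-vanishing curvature, the point $\lambda a+(1-\lambda)b$ is a $C^\infty$ regular point of $E_\lambda(M)$ when $M$ is curved in the same side at $a,b$ and $\lambda\in(0,1)$, or when $M$ is not curved in the same side and $\lambda\in(-\infty,0)\cup(1,\infty)$.

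\begin{proof}
The plan is to parametrize the two arcs near $a$ and near $b$ by a common angle parameter and compute when the velocity vector of the equidistant branch vanishes. First I would write $M$ locally near $a$ and near $b$ using the support-function/tangent-angle parametrization as in~(\ref{OvalParameterization}): since the curvature is non-vanishing at $a$ and $b$, the tangent direction is a valid local coordinate, and because $a,b$ form a parallel pair we may use the \emph{same} angle $\theta$ to trace both arcs simultaneously. Write the local parametrizations as $\gamma_a(\theta)$ near $a$ and $\gamma_b(\theta)$ near $b$, with radii of curvature $\rho_a(\theta)=\rho_M(\theta)$ and $\rho_b(\theta)=\rho_M(\theta+\psi)$ where $\psi$ is the angular shift between the two arcs (in the rosette application $\psi=k\pi$). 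Then the branch of $E_\lambda(M)$ through $\lambda a+(1-\lambda)b$ is $\theta\mapsto\lambda\gamma_a(\theta)+(1-\lambda)\gamma_b(\theta)$.

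The key computation is the derivative of this parametrization. Differentiating, the velocity is $\lambda\gamma_a'(\theta)+(1-\lambda)\gamma_b'(\theta)$, and since both $\gamma_a'$ and $\gamma_b'$ point along the tangent direction $\mathbbm{t}(\theta)=(-\sin\theta,\cos\theta)$ with magnitudes equal to the respective radii of curvature, this velocity equals
\begin{align*}
\big(\lambda\rho_a(\theta)+(1-\lambda)\varepsilon\,\rho_b(\theta)\big)\,\mathbbm{t}(\theta),
\end{align*}
where $\varepsilon=\pm1$ records whether the two tangent directions at the parallel pair agree or are anti-parallel. The point $\lambda a+(1-\lambda)b$ is a $C^\infty$ regular point precisely when the scalar coefficient $\lambda\rho_a+(1-\lambda)\varepsilon\rho_b$ is non-zero. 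The next step is to relate the sign $\varepsilon$ and the sign structure of this coefficient to the geometric condition ``curved in the same side.'' I would show that $M$ being curved in the same side at $a,b$ corresponds exactly to $\rho_a$ and $\varepsilon\rho_b$ having the \emph{same} sign (the centers of curvature lie on the same side of the common tangent), so that for $\lambda\in(0,1)$ the two terms $\lambda\rho_a$ and $(1-\lambda)\varepsilon\rho_b$ are a strictly positive combination of two same-sign nonzero quantities and cannot cancel. Conversely, if $M$ is not curved in the same side, the two radii contribute with opposite signs, so cancellation is possible for some $\lambda\in(0,1)$ but is excluded when $\lambda<0$ or $\lambda>1$ because then the coefficients $\lambda$ and $1-\lambda$ themselves have opposite signs, again producing a same-sign (hence non-cancelling) combination.

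The main obstacle I anticipate is the careful bookkeeping of signs: correctly identifying the orientation-induced sign $\varepsilon$, keeping track of the sign convention for $\rho$ coming from the orientation of $M$, and translating the ``same side of the tangent line'' condition on centers of curvature into an algebraic sign condition on $\rho_a$ versus $\varepsilon\rho_b$. Once that dictionary is fixed, the non-vanishing of $\lambda\rho_a+(1-\lambda)\varepsilon\rho_b$ on each of the stated $\lambda$-ranges is immediate from the observation that a combination $c_1 x_1+c_2 x_2$ with $c_1,c_2$ of the same sign and $x_1,x_2$ of the same sign (and all nonzero) is itself nonzero. Finally, since the coefficient is nonzero, the parametrization is an immersion at the point, and smoothness of $\gamma_a,\gamma_b$ gives the $C^\infty$ regularity claimed.
\end{proof}
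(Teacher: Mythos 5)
Your argument is correct: the paper itself states this proposition without proof, importing it from \cite{DZ1}, and your computation is essentially the same argument used there --- parametrize both arcs by the common tangent angle via the parallel-pair correspondence, write the velocity of the branch as $\big(\lambda\rho_a+(1-\lambda)\varepsilon\rho_b\big)\mathbbm{t}(\theta)$, and observe that the ``curved in the same side'' condition is exactly the statement that $\rho_a$ and $\varepsilon\rho_b$ share a sign, so the coefficient cannot vanish on the stated $\lambda$-ranges. The only step you leave as a promise rather than carry out, the dictionary between the centers-of-curvature condition and the sign of $\rho_a\cdot\varepsilon\rho_b$, is routine and checks out (the center of curvature at $b$, translated by $a-b$, lies at $a+\rho_b\mathbbm{n}_b$ with $\mathbbm{n}_b=\varepsilon\mathbbm{n}_a$), so no gap remains.
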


Theorem 6.1 in \cite{DZ1} and theorems in previous chapter (mainly Theorems \ref{ThmLenWCRm}, \ref{ThmRosettesOfTheWC} and \ref{ThmIsoEq}) show the geometry of the affine $\lambda$--equdisitants and in parcticular of the Wigner caustic of rosettes in a fairly detailed way. To study the geometry of affine $\lambda$--equidistants for the union of two rosettes we want in particular study the geometry of branches which are created from both $M$ and $N$. Therefore we present the following definition. 

\begin{defn}
Let $M$ and $N$ be planar regular closed curves. Then an affine $\lambda$--equidistant of the pair $M, N$ is the following set
\begin{align*}
E_{\lambda}(M,N)=\Big\{\lambda x+(1-\lambda)y\ \Big| &(x\in M\wedge y\in N)\ \vee\ (x\in N\wedge y\in M), \\
	& x,y\text{ is a parallel pair of }M\cup N\Big\}.
\end{align*}
\end{defn}
Note that $E_{\lambda}(M,M)=M\cup E_{\lambda}(M)$ and $E_{\lambda}(M\cup N)=E_{\lambda}(M)\cup E_{\lambda}(M,N)\cup E_{\lambda}(N)$.

\begin{lem}\label{LemEvenNumberOfCusps}
Let $M$ be a closed smooth curve with at most cusp singularities and let the rotation number of $M$ be an integer. Then the number of cusp singularities of $M$ is even.
\end{lem}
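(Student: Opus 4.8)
The plan is to read off the parity of the number of cusps from the turning of the tangent direction as one traverses $M$ once. First I would fix a parametrization $\gamma\colon[0,P]\to\mathbb{R}^2$ of $M$ with $\gamma(0)=\gamma(P)$, with the cusps occurring at parameters $0<t_1<\cdots<t_c<P$ and with the basepoint $t=0$ chosen to be a regular point (possible since there are only finitely many cusps). On each of the open arcs $(t_i,t_{i+1})$, as well as on the two end arcs, the curve is a smooth immersion, so the unit tangent $T=\gamma'/|\gamma'|$ is defined and continuous there, and I can choose a continuous real-valued lift $\varphi$ of its angle (with $e^{i\varphi}=T$) on each arc. The rotation number of $M$ is, by definition, $\frac{1}{2\pi}$ times the total continuous turning $\sum_i\big(\varphi(t_{i+1}^-)-\varphi(t_i^+)\big)$ accumulated along these arcs, i.e. the turning that does not count the reversals at the cusps.

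Next I would analyse the behaviour of $T$ at a single cusp. At a cusp the velocity $\gamma'$ vanishes while the tangent line stays well defined and varies continuously, and the velocity direction reverses: the two one-sided limits of $T$ at $t_i$ are antipodal on the circle. Consequently, across each cusp the lifted angle $\varphi$ jumps by an odd multiple of $\pi$; writing the $i$-th jump as $(2a_i+1)\pi$, the sum of all cusp jumps equals $\pi q$ for an integer $q$ with $q\equiv c\pmod 2$.

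Then I would close the loop. Since $\gamma$ is smooth and closed, $T(P)=T(0)$ as points of the circle, so the net change of the lift over the whole parameter interval — the continuous turning plus all the cusp jumps — is some integer multiple $2\pi k$ of $2\pi$. Hence the continuous turning equals $2\pi k-\pi q$, and the rotation number of $M$ equals $k-\tfrac{q}{2}$. Because $k\in\mathbb{Z}$ and $q\equiv c\pmod 2$, this number lies in $\mathbb{Z}$ if and only if $q$ is even, i.e. if and only if $c$ is even, which is exactly the assertion.

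I would expect the delicate point to be the local analysis at a cusp: one must justify carefully, from the definition of a cusp singularity, that the unit tangent has antipodal one-sided limits, so that each cusp contributes an odd — rather than even — multiple of $\pi$ to the lifted angle; this is where the entire parity count lives. A secondary point worth stating explicitly is the bookkeeping convention for the rotation number (the continuous turning of the tangent along the regular arcs, excluding the reversals at cusps), since it is precisely this convention that exposes the parity of $c$ in the half-integer $\tfrac{q}{2}$.
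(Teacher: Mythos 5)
Your proof is correct and takes essentially the same route as the paper: the paper's observation that a continuous normal vector field flips from pointing into the cusp to pointing out of it at each singularity is exactly your statement that the lifted tangent angle jumps by an odd multiple of $\pi$ there, and in both arguments the integrality of the rotation number then forces an even number of such reversals. The only difference is presentation --- the paper's proof is a two-line pictorial argument (with a figure of the normal field near a cusp), while yours is its rigorous angle-lift formalization, which also makes explicit the bookkeeping convention (turning accumulated along the regular arcs only) that the paper leaves implicit in its notion of rotation number.
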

\begin{proof}

\begin{figure}[h]
\centering
\includegraphics[scale=0.4]{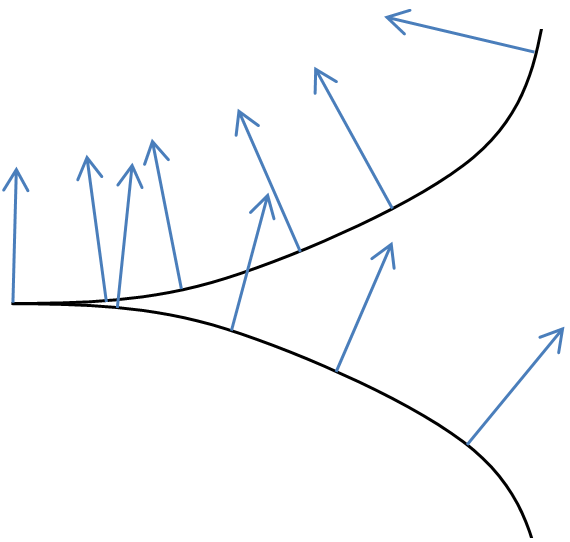}
\caption{The cusp singularity with a continuous normal vector field. Vectors in upper regular component of a curve are directed
outside the cusp, others are directed inside the cusp.}
\label{FigCuspContVectorField}
\end{figure}

A continuous normal vector field to the germ of a curve with the cusp singularity is directed outside the cusp on the one of two connected regular components and is directed inside the cusp on the other component as it is showed in Fig. \ref{FigCuspContVectorField}.

Since the rotation number of $M$ in an integer, we get that the number of cusps $M$ is even.

\end{proof}

Let $\gcd(m,n)$ and $\lcm(m,n)$ denote the greatest common divisor and the least common multiplier of two positive integers $m$ and $n$, respectively.

\begin{thm}\label{ThmGeometryEq2Rosettes}
Let $R_{m_1}$ and $R_{m_2}$ be two generic rosettes. Then
\begin{enumerate}[(i)]
	\item there exist $2\gcd(m_1,m_2)$ branches of $E_{0.5}(R_{m_1}, R_{m_2})$.
	\item at least $\gcd(m_1,m_2)$ branches of $E_{0.5}(R_{m_1},R_{m_2})$ are rosettes.
	\item there exist $4\gcd(m_1,m_2)$ branches of $E_{\lambda}(R_{m_1}, R_{m_2})$ for $\displaystyle\lambda\neq 0,\frac{1}{2}, 1$.
	\item at least $2\gcd(m_1,m_2)$ branches of $E_{\lambda}(R_{m_1},R_{m_2})$ for $\displaystyle\lambda\neq 0,\frac{1}{2}, 1$ are rosettes.
	\item every branch of $E_{\lambda}(R_{m_1},R_{m_2})$ for $\lambda\neq 0, 1$ has the rotation number equal to $\lcm(m_1,m_2)$.
	\item the number of cusps of every branch of $E_{\lambda}(R_{m_1},R_{m_2})$ for $\lambda\neq 0, 1$ is even.
\end{enumerate} 
\end{thm}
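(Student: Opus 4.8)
The plan is to analyze the parallel-pair structure of the union $R_{m_1}\cup R_{m_2}$ directly, exploiting the machinery of glueing schemes from \cite{DZ1} together with the Fourier/support-function description developed earlier in this section. First I would set up coordinates: parametrize each rosette $R_{m_i}$ by its polar tangential coordinate $\theta$, so that the tangent direction at the point $\gamma_{m_i}(\theta)$ is determined by $\theta\bmod\pi$. A parallel pair of the union $R_{m_1}\cup R_{m_2}$ of the mixed type (one point on each rosette) then consists of $\gamma_{m_1}(\theta)$ and $\gamma_{m_2}(\phi)$ with $\theta\equiv\phi\pmod{\pi}$. The key combinatorial point is to count how the $2m_1$ parallel points on $R_{m_1}$ (at $\theta=k\pi$) match up with the $2m_2$ parallel points on $R_{m_2}$, and how the diffeomorphisms $P$ between arcs string together into cycles; the length of each cycle governs one branch, and the number of cycles is governed by $\gcd(m_1,m_2)$.

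For part (v), the rotation number, I would track how the tangent direction of a mixed branch turns as the defining parameter traverses one full closed branch. Since moving from one rosette to the other and back forces the parameter on $R_{m_1}$ to advance by a multiple of $\pi$ that is also realized as a full set of returns on $R_{m_2}$, the branch closes up only after the tangent direction has rotated a number of times equal to $\lcm(m_1,m_2)$; this is where the interplay between the two periods $2m_1\pi$ and $2m_2\pi$ produces the least common multiple rather than either individual rotation number. Parts (i) and (iii) then follow by combining this rotation-number computation with the total count of mixed arcs: the number of mixed parallel arcs is fixed, and dividing by the common cycle length yields $2\gcd(m_1,m_2)$ branches for $\lambda=0.5$ and $4\gcd(m_1,m_2)$ for $\lambda\neq 0,\tfrac12,1$, the factor-of-two discrepancy coming exactly from the $E_\lambda=E_{1-\lambda}$ symmetry special to the Wigner caustic case, as in the remark preceding this theorem.

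Part (vi) is the cleanest: once (v) establishes that every mixed branch is a closed curve with at most cusp singularities whose rotation number $\lcm(m_1,m_2)$ is an integer, Lemma \ref{LemEvenNumberOfCusps} applies verbatim to give an even number of cusps. For parts (ii) and (iv), the count of branches that are genuinely rosettes (nonvanishing curvature, no cusps), I would argue as in the proof of Theorem \ref{ThmRosettesOfTheWC}: a branch fails to be a rosette precisely when its radius of curvature $\rho_{\lambda,k}$ vanishes somewhere, and $\rho_{\lambda,k}$ is built from $\rho_{m_1}(\theta)\pm\rho_{m_2}(\theta+c)$ type expressions. Using the sign analysis underlying Proposition \ref{PropRegularPointsOfEq} — whether the two curves are curved on the same side at the parallel pair — one shows that the branches arising from same-side configurations keep $\rho$ of one sign and hence are rosettes, yielding the stated lower bounds $\gcd(m_1,m_2)$ and $2\gcd(m_1,m_2)$.

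The main obstacle I anticipate is part (v), and through it the exact counts in (i) and (iii): one must verify carefully that the composite return map on the mixed arcs has every orbit of the same length, so that the branches partition evenly, and that this common length forces the rotation number to be $\lcm(m_1,m_2)$ rather than some proper divisor. This requires a genericity argument (the generic hypothesis on the rosettes) to rule out degenerate coincidences of parallel points and to guarantee that the glueing-scheme cycles do not merge or split unexpectedly; handling this bookkeeping uniformly across the arithmetic of $\gcd(m_1,m_2)$ and $\lcm(m_1,m_2)$ is the delicate step, whereas (ii), (iv), and (vi) should follow routinely from the curvature sign analysis and Lemma \ref{LemEvenNumberOfCusps} once (v) is in place.
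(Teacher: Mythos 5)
Your proposal follows essentially the same route as the paper's proof: maximal glueing-scheme cycles over the set of parallel arcs give the branch counts in (i) and (iii) (with the $\lambda\mapsto 1-\lambda$ symmetry explaining the factor of two, exactly as in the paper's two families of schemes) and the rotation number in (v), since each branch consists of $2\lcm(m_1,m_2)$ arcs of rotation number $\tfrac{1}{2}$; part (vi) then follows from Lemma \ref{LemEvenNumberOfCusps} and parts (ii), (iv) from the same-side curvature analysis via Proposition \ref{PropRegularPointsOfEq}, just as in the paper. One minor remark: the uniform orbit length you flag as the delicate step is purely arithmetic (the two rows of a scheme repeat with periods $2m_1$ and $2m_2$, so every cycle closes after exactly $2\lcm(m_1,m_2)$ arcs, and the offset $k$ ranges over residues mod $2\gcd(m_1,m_2)$); genericity is needed only to guarantee that branches are smooth curves with at most cusp singularities, not to equalize the cycle lengths.
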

\begin{proof}
The sets of parallel arcs $\Phi, \Phi'$ of rosettes $R_{m_1}, R_{m_2}$ are as follows.
\begin{align*}
\Phi &=\big\{(0,1), (1,2), \ldots, (2m_1-2,2m_1-1), (2m_1-1,0)\big\}\\
\Phi' &=\big\{(0',1'), (1',2'), \ldots, (2m_2'-2,2m_2'-1), (2m_2'-1,0')\big\},
\end{align*}
where $a':=a+2m_1$.

Therefore the set of parallel arcs of $R_{m_1}\cup R_{m_2}$ is in the following form.
\begin{align}
\Phi_0=\big\{&(0,1), (1,2), \ldots, (2m_1-2,2m_1-1), (2m_1-1,0), \\
\nonumber	&(0',1'), (1',2'), \ldots, (2m_2'-2,2m_2'-1), (2m_2'-1,0')\big\}.
\end{align}

Let $E_{\lambda, k}(R_{m_1},R_{m_2})$ for an integer $k$ denote a branch of $E_{\lambda}(R_{m_1},R_{m_2})$.

Let us notice that the maximal glueing scheme of $E_{0.5, k}(R_{m_1}, R_{m_2})$ for $k=0, 1, 2, \ldots, 2\gcd(m_1,m_2)-1$ is in the following form
\begin{align}\label{MaxGlueSchemeWC}
\begin{array}{|ccccc|} \hline
0 & 1 & \ldots & 2m_1-1 & 0 \\ \hline
k' & k'+1 & \ldots &k'-1 & k' \\ \hline
\end{array},
\end{align}
where on the upper part of (\ref{MaxGlueSchemeWC}) we have repetitively the following segment 
\begin{align*}
\begin{array}{cccccc}\hline 0 & 1& 2& \ldots & 2m_1-1 & 0\\ \hline\end{array}
\end{align*}
 of the length $2m_1$ and on the lower part of (\ref{MaxGlueSchemeWC}) we have repetitively the following segment \begin{align*}
\begin{array}{cccccc}\hline k' & k'+1& k'+2& \ldots & k'-1 & k'\\ \hline\end{array}
\end{align*} of the length $2m_2$. Therefore the maximal glueing scheme (\ref{MaxGlueSchemeWC}) is made of \linebreak $2\lcm(m_1,m_2)$ arcs.

The number of arcs of branches of $E_{0.5}(R_{m_1}\cup R_{m_2})$ is equal to $\displaystyle {|\Phi_0|\choose 2}$. Because the number of arcs of branches of $E_{0.5}(R_{m_1})$ (resp. of $E_{0.5}(R_{m_2})$) is equal to $\displaystyle {2m_1\choose 2}$ (resp. $\displaystyle {2m_2\choose 2}$), therefore the number of arcs of $E_{0.5}(R_{m_1},R_{m_2})$ is equal to $\displaystyle {|\Phi_0|\choose 2}-{2m_1\choose 2}-{2m_2\choose 2}=4m_1m_2$. Since we already used $2\gcd(m_1,m_2)\cdot 2\lcm(m_1,m_2)=4m_1m_2$ arcs in maximal glueing schemes in the form (\ref{MaxGlueSchemeWC}) we cannot construct more branches of $E_{0.5}(R_{m_1}, R_{m_2})$.

Let $\displaystyle\lambda\neq 0, \frac{1}{2}, 1$. Using the same arguments as previous we can show that all maximal glueing schemes of $E_{\lambda, k}(R_{m_1}, R_{m_2})$ for $k=0, 1, 2, \ldots, 2\gcd(m_1,m_2)-1$ are in the form (\ref{MaxGlueSchemeEq1}) and for $k=2\gcd(m_1,m_2), 2\gcd(m_1,m_2)+1, \ldots, 4\gcd(m_1,m_2)-1$ are in the form (\ref{MaxGlueSchemeEq2}), where $a'':=(a-2\gcd(m_1,m_2))'$.

\begin{align}\label{MaxGlueSchemeEq1}
\begin{array}{|ccccc|} \hline
0 & 1 & \ldots & 2m_1-1 & 0 \\ \hline
k' & k'+1 & \ldots &k'-1 & k' \\ \hline
\end{array}, \\ 
\label{MaxGlueSchemeEq2}
\begin{array}{|ccccc|} \hline
k'' & k''+1 & \ldots &k''-1 & k'' \\ \hline
0 & 1 & \ldots & 2m_1-1 & 0 \\ \hline
\end{array},
\end{align}
where on the upper part of (\ref{MaxGlueSchemeEq1}) (resp. the lower part of (\ref{MaxGlueSchemeEq2}))  we have repetitively the following segment 
\begin{align*}
\begin{array}{cccccc}\hline 0 & 1& 2& \ldots & 2m_1-1 & 0\\ \hline\end{array}
\end{align*}
 of the length $2m_1$ and on the lower part of (\ref{MaxGlueSchemeEq1}) (resp. the upper part of (\ref{MaxGlueSchemeEq2})) we have repetitively the following segment \begin{align*}
&\begin{array}{cccccc}\hline k' & k'+1& k'+2& \ldots & k'-1 & k'\\ \hline\end{array}
\\
(\text{resp. }&\begin{array}{cccccc}\hline k'' & k''+1& k''+2& \ldots & k''-1 & k''\\ \hline\end{array})
\end{align*} of the length $2m_2$.

By (\ref{MaxGlueSchemeWC}), (\ref{MaxGlueSchemeEq1}) and (\ref{MaxGlueSchemeEq2}) let us notice that every branch $E_{\lambda, k}(R_{m_1}, R_{m_2})$ is made of $2\lcm (m_1,m_2)$ arcs and each arc has the rotation number equal to $\displaystyle\frac{1}{2}$. Since the rotation number of every branch $E_{\lambda, k}(R_{m_1}, R_{m_2})$ in an integer and for generic rosettes $R_{m_1}, R_{m_2}$ every branch $E_{\lambda, k}(R_{m_1}, R_{m_2})$ are smooth curves with at most cusp singularities, we get that the number of cusp singularities of every branch $E_{\lambda, k}(R_{m_1}, R_{m_2})$ is even (see Lemma \ref{LemEvenNumberOfCusps}).

Let $S^1\ni s\mapsto f_i(s)\in\mathbb{R}$ be the parameterization of $R_{m_i}$ for $i=1, 2$. Without loss of generality let us assume that curve $R_{m_1}$ at $f_1(0)$ and $R_{m_2}$ at $f_2(0')$ are curved in the same direction. Then one can see that branches $E_{0.5, 2l}(R_{m_1},R_{m_2})$ for $l=0, 1, \ldots, \gcd(m_1,m_2)-1$ have the property that for any parallel pair $a_1,a_2$ such that $a_1\in R_{m_1}, a_2\in R_{m_2}$ and the point $\displaystyle\frac{a_1+a_2}{2}$ belongs to one of the branches above, the curves $R_{m_1}, R_{m_2}$ are curved in the same side, then by Proposition \ref{PropRegularPointsOfEq} these branches are rosettes. Using the same method one can show that at least $2\gcd(m_1,m_2)$ branches of $E_{\lambda}(R_{m_1}, R_{m_2})$ for $\displaystyle\lambda\neq 0, \frac{1}{2}, 1$ are rosettes.

\end{proof}

\begin{rem}
The set $E_{\lambda}(R_{m_1}, R_{m_2})$ can be the union of only rosettes (see Fig. \ref{FigEqOnlyRosettes}).
\end{rem}

\begin{figure}[h]
\centering
\includegraphics[scale=0.24]{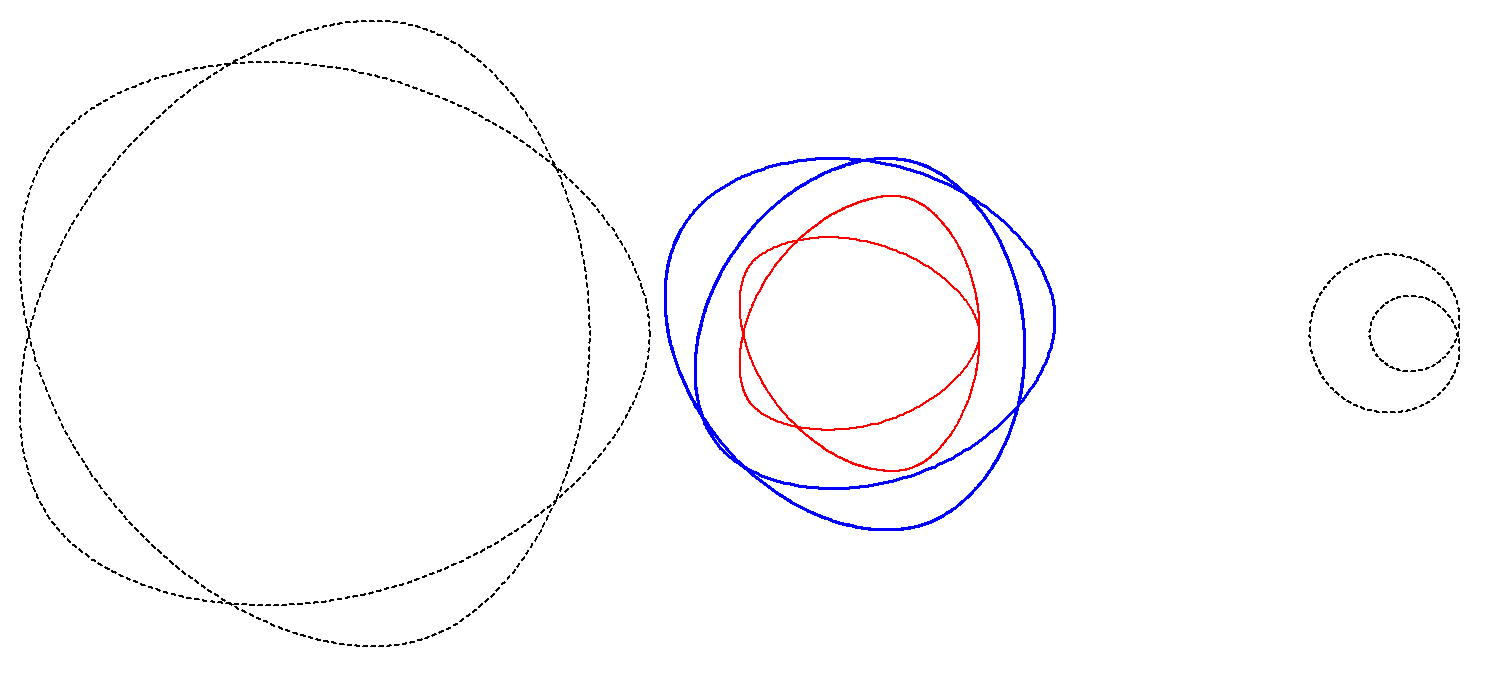}
\caption{Two rosettes $R_2, R_2'$ (the dashed lines) and two branches of $E_{0.5}(R_2, R_2')$ (the normal and the bold line). The support functions of $R_2$ (resp. $R_2'$) is $p_{R_2}(\theta)=10+\cos\frac{5\theta}{2}$ (resp. $p_{R_2'}(\theta)=2+36\cos\theta+\sin\frac{\theta}{2}$).}
\label{FigEqOnlyRosettes}
\end{figure}

By the maximal glueing schemes (\ref{MaxGlueSchemeWC}), (\ref{MaxGlueSchemeEq1}) and (\ref{MaxGlueSchemeEq2}) we can get the following lemma.

\begin{lem}\label{LemSupportBranchEq}
If $\displaystyle\lambda=\frac{1}{2}$ then the $2\pi\lcm(m_1,m_2)$--periodic support function of a branch $E_{0.5, k}(R_{m_1}, R_{m_2})$ for $k=0, 1, 2, \ldots, 2\gcd(m_1,m_2)-1$ is in the following form
\begin{align}
p_{E_{0.5, k}(R_{m_1}, R_{m_2})}(\theta)=\frac{1}{2}\Big(p_{R_{m_1}}(\theta)+(-1)^kp_{R_{m_2}}(\theta+k\pi)\Big).
\end{align}

If $\displaystyle\lambda\neq 0, \frac{1}{2}, 1$ then the $2\pi\lcm(m_1,m_2)$--periodic support function of a branch $E_{\lambda, k}(R_{m_1}, R_{m_2})$ for $k=0, 1, 2, \ldots, 2\gcd(m_1, m_2)-1$ (resp. for $k=2\gcd(m_1,m_2),$ $ 2\gcd(m_1,m_2)+1, \ldots,$ $4\gcd(m_1,m_2)-1$) is in the form (\ref{SupportEq1}) (resp. in the form (\ref{SupportEq2})).
\begin{align}\label{SupportEq1}
p_{E_{\lambda, k}(R_{m_1}, R_{m_2})}(\theta) &=\lambda p_{R_{m_1}}(\theta)+(-1)^k(1-\lambda) p_{R_{m_2}}(\theta+k\pi),\\
\label{SupportEq2} p_{E_{\lambda, k}(R_{m_1}, R_{m_2})}(\theta) &=(1-\lambda)p_{R_{m_1}}(\theta)+(-1)^k\lambda p_{R_{m_2}}\Big(\theta+(k-2\gcd(m_1,m_2))\pi\Big).
\end{align}

\end{lem}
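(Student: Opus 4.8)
The plan is to read off the support function of each branch directly from its maximal glueing scheme, exactly as was done for the Wigner caustic of a single rosette in equation (\ref{SupportBranchWC}). The starting observation is that the support function is an intrinsic, additive-under-affine-combination object: if a point of an equidistant is $\lambda x + (1-\lambda)y$ for a parallel pair $x,y$, its supporting line (at the common normal direction) is the corresponding $\lambda$-weighted combination of the supporting lines of $x$ and $y$. So the whole task reduces to bookkeeping: identifying, for a point parametrized by $\theta$ on the branch, which point of $R_{m_1}$ and which point of $R_{m_2}$ it comes from, and with what sign.

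First I would fix the branch $E_{\lambda,k}(R_{m_1},R_{m_2})$ and use its maximal glueing scheme (\ref{MaxGlueSchemeWC}), (\ref{MaxGlueSchemeEq1}) or (\ref{MaxGlueSchemeEq2}) to set up a consistent angular parameter $\theta$. Because the upper row traces an arc of $R_{m_1}$ and the lower row (shifted by $k\pi$) traces the matching arc of $R_{m_2}$, a point of the branch with normal direction $\theta$ arises from $x=\gamma_{R_{m_1}}(\theta)$ paired with a point of $R_{m_2}$ whose outward normal is also $\theta$; that point is $\gamma_{R_{m_2}}(\theta + k\pi)$. The key sign subtlety is that advancing $\theta$ by $\pi$ reverses the tangent (and hence the normal) direction, so the supporting line of $R_{m_2}$ at parameter $\theta + k\pi$ contributes the value $(-1)^k p_{R_{m_2}}(\theta + k\pi)$ rather than $+p_{R_{m_2}}(\theta + k\pi)$ — this is precisely the $(-1)^k$ already seen in (\ref{SupportBranchWC}). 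Assembling the $\lambda$-weighted sum then yields (\ref{SupportEq1}) for $\lambda=\tfrac12$ with weight $\tfrac12$ on each factor, and the general formula for $\lambda \neq 0,\tfrac12,1$.

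Next I would treat the second family of branches, those with $k = 2\gcd(m_1,m_2),\dots,4\gcd(m_1,m_2)-1$, governed by scheme (\ref{MaxGlueSchemeEq2}). Here the roles of $R_{m_1}$ and $R_{m_2}$ in the upper and lower rows are swapped, so in $\lambda x + (1-\lambda)y$ the point of $R_{m_1}$ now carries weight $1-\lambda$ and the point of $R_{m_2}$ carries weight $\lambda$; this exchange of weights is exactly why (\ref{SupportEq2}) has $1-\lambda$ and $\lambda$ interchanged relative to (\ref{SupportEq1}). The shift by $(k - 2\gcd(m_1,m_2))\pi$ simply reindexes $k$ back into the range $[0, 2\gcd(m_1,m_2))$ used to label the arc of $R_{m_2}$, matching the substitution $a'' := (a - 2\gcd(m_1,m_2))'$ in the statement; and the sign $(-1)^k$ is unchanged because $2\gcd(m_1,m_2)\pi$ is an even multiple of $\pi$. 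Finally, I must confirm the claimed period: by Theorem \ref{ThmGeometryEq2Rosettes}(v) every branch has rotation number $\lcm(m_1,m_2)$, so its support function is $2\pi\lcm(m_1,m_2)$-periodic, and one checks directly that the right-hand sides of (\ref{SupportEq1}) and (\ref{SupportEq2}) are invariant under $\theta \mapsto \theta + 2\pi\lcm(m_1,m_2)$ since each $p_{R_{m_i}}$ has period $2m_i\pi$ and $2\pi\lcm(m_1,m_2)$ is a common multiple.

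\emph{The main obstacle} is not any hard estimate but getting the combinatorial correspondence between the glueing scheme and the analytic parameter $\theta$ exactly right — in particular, pinning down the sign convention on the normal vector field as one crosses successive arcs, so that the $(-1)^k$ emerges consistently, and verifying that the parameter shift in scheme (\ref{MaxGlueSchemeEq2}) is correctly absorbed into the argument of $p_{R_{m_2}}$. Once the parametrization and the sign rule are fixed, the formulas follow by the same computation that produced (\ref{SupportBranchWC}) for a single rosette, and no genericity beyond that already assumed in Theorem \ref{ThmGeometryEq2Rosettes} is needed.
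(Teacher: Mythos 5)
Your proposal is correct and takes essentially the same route as the paper, which states this lemma as a direct consequence of the maximal glueing schemes (\ref{MaxGlueSchemeWC}), (\ref{MaxGlueSchemeEq1}) and (\ref{MaxGlueSchemeEq2}) combined with the same support-function computation and $(-1)^k$ sign rule already used for (\ref{SupportBranchWC}). Your explicit bookkeeping --- the swap of the weights $\lambda$ and $1-\lambda$ for the second family of branches, the reindexing $k-2\gcd(m_1,m_2)$ with the observation that the sign is unaffected by an even multiple of $\pi$, and the verification of $2\pi\lcm(m_1,m_2)$--periodicity from the periods $2m_i\pi$ of $p_{R_{m_i}}$ --- is precisely the detail the paper leaves implicit.
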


\begin{thm}\label{ThmLenEq2Rosettes}
\begin{enumerate}[(i)]
\item If $k$ is even, $k<2\gcd(m_1,m_2)$ and $\lambda\in(0,1)$, then
\begin{align*}
L_{E_{\lambda, k}(R_{m_1},R_{m_2})}=\lambda \frac{\lcm(m_1,m_2)}{m_1}L_{R_{m_1}}+(1-\lambda)\frac{\lcm(m_1,m_2)}{m_2}L_{R_{m_2}}.
\end{align*}
\item If $k$ is even, $k\geqslant 2\gcd(m_1,m_2)$ and $\displaystyle\lambda\in(0,1)-\left\{\frac{1}{2}\right\}$, then
\begin{align*}
L_{E_{\lambda, k}(R_{m_1},R_{m_2})}=(1-\lambda) \frac{\lcm(m_1,m_2)}{m_1}L_{R_{m_1}}+\lambda \frac{\lcm(m_1,m_2)}{m_2}L_{R_{m_2}}.
\end{align*}
\item If $k$ is odd, $k<2\gcd(m_1,m_2)$ and $\lambda\in(0,1)$, then
\begin{align*}
L_{E_{\lambda, k}(R_{m_1},R_{m_2})}\leqslant \lambda \frac{\lcm(m_1,m_2)}{m_1}L_{R_{m_1}}+(1-\lambda)\frac{\lcm(m_1,m_2)}{m_2}L_{R_{m_2}}.
\end{align*}
\item If $k$ is odd, $k\geqslant 2\gcd(m_1,m_2)$ and $\displaystyle\lambda\in(0,1)-\left\{\frac{1}{2}\right\}$, then
\begin{align*}
L_{E_{\lambda, k}(R_{m_1},R_{m_2})}\leqslant (1-\lambda) \frac{\lcm(m_1,m_2)}{m_1}L_{R_{m_1}}+\lambda \frac{\lcm(m_1,m_2)}{m_2}L_{R_{m_2}}.
\end{align*}
\item If $k$ is odd, $k<2\gcd(m_1,m_2)$ and $\lambda\in(-\infty,0)\cup(1,\infty)$, then
\begin{align*}
L_{E_{\lambda, k}(R_{m_1},R_{m_2})}= |\lambda|\frac{\lcm(m_1,m_2)}{m_1} L_{R_{m_1}}+|1-\lambda|\frac{\lcm(m_1,m_2)}{m_2}L_{R_{m_2}}.
\end{align*}
\item If $k$ is odd, $k\geqslant 2\gcd(m_1,m_2)$ and $\lambda\in(-\infty,0)\cup(1,\infty)$, then
\begin{align*}
L_{E_{\lambda, k}(R_{m_1},R_{m_2})}= |1-\lambda|\frac{\lcm(m_1,m_2)}{m_1} L_{R_{m_1}}+|\lambda|\frac{\lcm(m_1,m_2)}{m_2} L_{R_{m_2}}.
\end{align*}
\item If $k$ is even, $k<2\gcd(m_1,m_2)$ and $\lambda\in(-\infty,0)\cup(1,\infty)$, then
\begin{align*}
L_{E_{\lambda, k}(R_{m_1},R_{m_2})}\leqslant |\lambda|\frac{\lcm(m_1,m_2)}{m_1} L_{R_{m_1}}+|1-\lambda|\frac{\lcm(m_1,m_2)}{m_2}L_{R_{m_2}}.
\end{align*}
\item If $k$ is even, $k\geqslant 2\gcd(m_1,m_2)$ and $\lambda\in(-\infty,0)\cup(1,\infty)$, then
\begin{align*}
L_{E_{\lambda, k}(R_{m_1},R_{m_2})}\leqslant |1-\lambda|\frac{\lcm(m_1,m_2)}{m_1} L_{R_{m_1}}+|\lambda| \frac{\lcm(m_1,m_2)}{m_2}L_{R_{m_2}}.
\end{align*}
\end{enumerate}
\end{thm}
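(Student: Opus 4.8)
The plan is to reduce every one of the eight cases to the arc-length integral of the radius of curvature and then to carry out a sign analysis. For a curve whose $2\pi\lcm(m_1,m_2)$-periodic support function is $q$, the parameterization $\theta\mapsto(q\cos\theta-q'\sin\theta,\,q\sin\theta+q'\cos\theta)$ has velocity $(q+q'')(-\sin\theta,\cos\theta)$, hence speed $|q(\theta)+q''(\theta)|$, so its length equals $\int_0^{2\pi\lcm(m_1,m_2)}|q(\theta)+q''(\theta)|\,d\theta$. Applying this to the support functions supplied by Lemma \ref{LemSupportBranchEq} and using that the operator $q\mapsto q+q''$ is linear and commutes with a shift of the argument, I get in each case
$$\rho_{E_{\lambda,k}(R_{m_1},R_{m_2})}(\theta)=c_1\,\rho_{R_{m_1}}(\theta)+c_2\,\rho_{R_{m_2}}(\theta+c\pi),$$
where $\{c_1,c_2\}=\{\pm\lambda,\pm(1-\lambda)\}$ with the signs dictated by $(-1)^k$ and by whether the branch is described by form (\ref{SupportEq1}) or (\ref{SupportEq2}), and $c$ is the corresponding integer shift. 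The decisive input is that $R_{m_1}$ and $R_{m_2}$ are rosettes, so $\rho_{R_{m_1}}>0$ and $\rho_{R_{m_2}}>0$ everywhere.

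Next I record the elementary integration fact that, since $\rho_{R_{m_i}}$ is $2m_i\pi$-periodic and $2\pi\lcm(m_1,m_2)=\tfrac{\lcm(m_1,m_2)}{m_i}\cdot 2m_i\pi$, any translate integrates, by the length formula (\ref{FourierLength}), to
$$\int_0^{2\pi\lcm(m_1,m_2)}\rho_{R_{m_i}}(\theta+c\pi)\,d\theta=\frac{\lcm(m_1,m_2)}{m_i}\,L_{R_{m_i}};$$
the shift is irrelevant because the integral runs over a whole number of periods. These two numbers are exactly the quantities appearing on the right-hand sides of the displayed formulas.

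The heart of the argument is a sign dichotomy. When $c_1$ and $c_2$ have the same sign, one has the pointwise identity $|\rho_{E_{\lambda,k}}(\theta)|=|c_1|\,\rho_{R_{m_1}}(\theta)+|c_2|\,\rho_{R_{m_2}}(\theta+c\pi)$, and integration yields the stated equality. When $c_1$ and $c_2$ have opposite signs, only the triangle inequality $|\rho_{E_{\lambda,k}}(\theta)|\leqslant|c_1|\,\rho_{R_{m_1}}(\theta)+|c_2|\,\rho_{R_{m_2}}(\theta+c\pi)$ is available, giving the stated upper bound. It then remains to determine which configuration occurs. For $\lambda\in(0,1)$ both $\lambda$ and $1-\lambda$ are positive, so the sign relation is governed purely by $(-1)^k$: even $k$ keeps the signs equal (equality, cases (i)--(ii)) and odd $k$ opposes them (inequality, cases (iii)--(iv)). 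For $\lambda\in(-\infty,0)\cup(1,\infty)$ the factors $\lambda$ and $1-\lambda$ already carry opposite signs, so the roles reverse: odd $k$ now realigns the signs (equality, cases (v)--(vi)) while even $k$ splits them (inequality, cases (vii)--(viii)). Finally, passing from $k<2\gcd(m_1,m_2)$ to $k\geqslant 2\gcd(m_1,m_2)$ merely interchanges form (\ref{SupportEq1}) with form (\ref{SupportEq2}), which swaps which of $R_{m_1},R_{m_2}$ carries the factor $\lambda$, accounting for the two labelings of the right-hand sides.

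The only genuine obstacle is the bookkeeping: one must track the interaction among the parity of $k$, the two support-function forms (\ref{SupportEq1})/(\ref{SupportEq2}), and the sign of the pair $(\lambda,1-\lambda)$, and verify in each instance that the signs of $c_1,c_2$ agree or disagree as claimed. The geometric content is uniform across all eight statements; once the reduction to $\int|\rho|$ and the periodicity integral are in place, each case collapses to a single sign check followed either by an exact identity or by one application of the triangle inequality.
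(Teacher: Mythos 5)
Your proposal is correct and follows essentially the same route as the paper: both compute the length as $\int_0^{2\pi\lcm(m_1,m_2)}\left|\rho_{E_{\lambda,k}}(\theta)\right|d\theta$ using the support functions of Lemma \ref{LemSupportBranchEq}, linearity of $q\mapsto q+q''$, positivity of $\rho_{R_{m_i}}$, and periodicity to evaluate the translated integrals as $\frac{\lcm(m_1,m_2)}{m_i}L_{R_{m_i}}$. The only difference is one of completeness: the paper writes out just case (i), where $\rho_{E_{\lambda,k}}>0$ makes the absolute value vanish, and declares the rest analogous, whereas you make explicit the sign dichotomy (same-sign coefficients give equality, opposite-sign coefficients give the triangle-inequality bound) that underlies all eight cases.
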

\begin{proof}
We prove (i), other points are analogous.
So let $k$ be an even number and $k<2\gcd(m_1,m_2)$ and let $\lambda\in(0,1)$.
Therefore we get
\begin{align*}
L_{E_{\lambda, k}(R_{m_1}, R_{m_2})} &=\int_0^{2\pi\lcm(m_1,m_2)}\rho_{E_{\lambda, k}(R_{m_1}, R_{m_2})}(
\theta)d\theta\\
	&=\int_0^{2\pi\lcm(m_1,m_2)}p_{E_{\lambda, k}(R_{m_1}, R_{m_2})}(
\theta)d\theta\\
	&=\lambda\int_0^{2\pi\lcm(m_1,m_2)}p_{R_{m_1}}(\theta)d\theta+(1-\lambda)\int_0^{2\pi\lcm(m_1,m_2)}p_{R_{m_2}}(\theta)d\theta\\
	&=\lambda\frac{\lcm(m_1,m_2)}{m_1}L_{R_{m_1}}+(1-\lambda)\frac{\lcm(m_1,m_2)}{m_2}L_{R_{m_2}}.
\end{align*}

\end{proof}


\section*{Acknowledgements}
The work of M. Zwierzy\'nski was partially supported by NCN grant no. DEC-2013/11/B/ST1/03080.
The author is sincerely grateful to Professor Wojciech Domitrz for the valuable discussions.

\bibliographystyle{amsalpha}

\begin{thebibliography}{AAAA}



\bibitem [1] {B1} Berry, M.V.: \emph{Semi-classical mechanics in phase space: a study of Wigner's function}, Philos. Trans. R. Soc. Lond. A 287, 237-271 (1977).

\bibitem [2] {C1} Chavel, I.: \emph{Isoperimetric Inequalities}. Differential Geometric and Analytic Perspectives. Cambridge University Press, 2001.

\bibitem [3] {CM1} Cie\'slak, W., Mozgawa, W.: \emph{On rosettes and almost rosettes}, Geom. Dedicata 24, 221--228 (1987)

\bibitem [4] {C2} Craizer, M.: \emph{Iterates of Involutes of Constant Width Curves in the Mikowski Plane}, Beitr\"age zur Algebra und Geometrie, 55(2), 479--496, 2014.


\bibitem [5] {CDR1} Craizer, M., Domitrz, W., Rios, P. de M.: \emph{Even Dimensional Improper Affine Spheres} , J. Math. Anal. Appl. 421 (2015), pp. 1803--1826.

\bibitem [6] {DMR1} Domitrz, W., Manoel, M., Rios, P. de M.: \emph{The Wigner caustic on shell and singularities of odd functions} , Journal of Geometry and Physics 71(2013), pp. 58-72

\bibitem [7] {DR1} Domitrz, W., Rios, P. de M.: \emph{ Singularities of equidistants and Global Centre Symmetry sets of Lagrangian submanifolds}, Geom. Dedicata 169 (2014), pp. 361-382.

\bibitem [8] {DRS1} Domitrz, W., Rios, P. de M., Ruas, M. A. S.: \emph{Singularities of affine equidistants: projections and contacts}, J. Singul. 10 (2014), 67-81

\bibitem [9] {DZ1} Domitrz, W., Zwierzy\'nski, M.: \emph{The geometry of the Wigner caustic and affine equidistants of planar curves}, arXiv:1605.05361

\bibitem [10] {FHK1} Farin, G., Hoschek, J., Kim, M.-S.: \emph{Handbook of Computer Aided Geometric Design}. North-Holland

\bibitem [11] {FKN1} Ferone, V., Kawohl, B., Nitsch, C.: \emph{The elastica problem under area constraint}, Mathematische Annalen, DOI 10.1007/s00208-015-1284-y

\bibitem [12] {G6} Gage, M. E.: \emph{Curve shortening makes convex curves circular}, Invent. Math., 76(1984), 357--364.

\bibitem [13] {G7} Gage, M.E.: \emph{An isoperimetric inequality with applications to curve shortening}, Duke Math. J. 50 (1983), no. 4, 1225--1229.

\bibitem [14] {G3} Giblin, P.J.: \emph{Affinely invariant symmetry sets}. Geometry and Topology of Caustics, Banach Center Publications, vol.82 (2008), p.71-84.

\bibitem [15] {GW1} Giblin, P.J., Warder, J.P.: \emph{Reconstruction from medial representations}, American Mathematical Monthly 118 (2011), 712--725

\bibitem [16] {GWZ1} Giblin, P.J., Warder, J.P., Zakalyukin, V.M.: \emph{ Bifurcations of affine equidistants}, Proceedings of the Steklov Institute of Mathematics 267 (2009), 57--75.

\bibitem [17] {GZ1} Giblin, P. J., Zakalyukin, V. M.: \emph{Singularities of Centre Symmetry Sets}. Proc. London Math. Soc. (3) 90 (2005), 132--166.

\bibitem [18] {G1} G\'o\'zd\'z, S.: \emph{An antipodal set of a periodic function}, J. of Math. Anal. and App. Vol. 148, pp. 11--21, May 1990.

\bibitem [19] {GO1} Green, M., Osher, S.: \emph{Steiner polynomials, Wulff flows, and some new isoperimetric inequalities for
convex plane curves}, Asia J. Math., 3(1999), 659--676.

\bibitem [20] {G4} Groemer, H.: \emph{Geometric applications of Fourier series and spherical harmonics}, Encyclopedia of Mathematics and its Applications, vol. 61. Cambridge University
Press, Cambridge (1996)

\bibitem [21] {HL1} Hoschek, J., Lasser D.: \emph{Fundamentals of Computer Aided Geometric Design}. A. K. Peters Wellesley MA., Ltd.

\bibitem [22] {H3} Hurwitz, A.: \emph{Sur quelque applications geometriques des series Fourier}, Ann. Sci. Ecole Normal Sup. (3) 19 (1902). 357--408.

\bibitem [23] {H2} Hsiung, C. C.: \emph{A First Course in Differential Geometry. Pure Appl. Math.}, Wiley, New York 1981. 

\bibitem [24] {JJR1} Janeczko, S., Jelonek, Z., Ruas, M. A. S.: \emph{Symmetry defect of algebraic varieties}, Asian J. Math. Vol. 18, No. 3, pp. 525-544, July 2014.

\bibitem [25] {KGP1} Kimia, B., Giblin, P.J., Pollitt, A.: \emph{Transitions of the 3D medial axis under a one-parameter family of deformations}. IEEE Transactions in Pattern Analysis and Machine Intelligence 31 (2009), 900--918.

\bibitem [26] {L1} Lawlor, G.: \emph{A new area-maximization proof for the circle}. Math. Intell. 20 (1998), 29--31

\bibitem [27] {L2} Laugwitz, D.: \emph{Differential and Riemannian Geometry}, Academic Press, New York/ London, 1965.

\bibitem [28] {MM1} Miernowski, A., Mozgawa, W.: \emph{Isoptics of rosettes and rosettes of constant width}, Note di Matematica Vol. 15 - n. 2, 203--213 (1995)

\bibitem [29] {MM2} Miernowski, A., Mozgawa, W.: \emph{On the curves of constant relative width}. Rend. Semin. Mat. Univ. Padova 107, 57–65 (2002)

\bibitem [30] {Moz1} Mozgawa, W.: \emph{Mellish theorem for generalized constant width curves}, Aequat. Math. 89 (2015), 1095–1105, DOI 10.1007/s00010-014-0321-3

\bibitem [31] {PP1} Peternell M., Pottmann H.: \emph{A Laguerre Geometric Approach to Rational Offsets}. Computer Aided Geometric Design vol. 15, pp. 223--249.

\bibitem [32] {RZ1} Reeve, G. M., Zakalyukin, V. M.: \emph{Singularities of the Minkowski set and affine equidistants for a curve and a surface}. Topology Appl. 159 (2012), no. 2, 555--561.

\bibitem [33] {San} Santalo, L.: \emph{Integral geometry and geometric probability}, Encyclopedia of Mathematics and its Applications, Reading, Mass., 1976.

\bibitem [34] {S1} Steiner, J.: \emph{Sur le maximum et le minimum des figures dans le plan, sur la sph\'ere, et dans l'espace en g\'en\'eral}, I and II. J. Reine Angew. Math. (Crelle) 24 (1842), 93-152 and 189-250.

\bibitem [35] {T1} Thom, R.: \emph{Structural stability and morphogenesis}. Reading, Mass: Benjamin (1975).

\bibitem [36] {Z1} Zakalyukin, V.M.: \emph{Envelopes of families of wave fronts and control theory}, Proc. Steklov
Math. Inst. 209 (1995), 133-142.

\bibitem [37] {Zw1} Zwierzy\'nski, M.: \emph{The improved isoperimetric inequality and the Wigner caustic of planar ovals}, J. Math. Anal. Appl. (2016), http://dx.doi.org/j.jmaa.2016.05.016

\bibitem [38] {Zw2} Zwierzy\'nski, M.: \emph{The Constant Width Measure Set, the Spherical Measure Set and isoperimetric equalities for planar ovals}, arXiv:1605.02930

\end{thebibliography}

\end{document}